\crefname{equation}{}{}
\crefname{Equation}{equation}{equations}
\theoremstyle{plain}
\newtheorem{theorem}{Theorem}
\newtheorem{proposition}{Proposition}
\newtheorem{lemma}{Lemma}
\newtheorem{corollary}{Corollary}
\theoremstyle{definition}
\newtheorem{definition}{Definition}
\newtheorem{remark}{Remark}
\newtheorem{example}{Example}
\newcommand{\appendixnumbering}{\setcounter{theorem}{0}\setcounter{lemma}{0}\setcounter{example}{0}\setcounter{property}{0}\setcounter{assumption}{0}\setcounter{proposition}{0}\setcounter{corollary}{0}\setcounter{definition}{0}\setcounter{figure}{0}\setcounter{table}{0}\renewcommand{\thetheorem}{S\arabic{theorem}}\renewcommand{\thelemma}{S\arabic{lemma}}\renewcommand{\theexample}{S\arabic{example}}\renewcommand{\theproperty}{S\arabic{property}}\renewcommand{\theassumption}{S\arabic{assumption}}\renewcommand{\theproposition}{S\arabic{proposition}}\renewcommand{\thecorollary}{S\arabic{corollary}}\renewcommand{\thedefinition}{S\arabic{definition}}\renewcommand{\thefigure}{S\arabic{figure}}\renewcommand{\thetable}{S\arabic{table}}}
\def\T{\intercal}
\def\E{{\rm E}}
\def\R{\mathbb R}
\def\M{\mathbb M}
\def\N{\mathbb N}
\def\RV{\mathrm{RV}}
\let \epsilon \varepsilon
\numberwithin{equation}{section}
\newcommand{\Pb}{{\rm pr}} 
\renewcommand{\P}{\Pb} 
\newcommand{\rbrac}[1]{\left(#1\right)}
\newcommand{\sqbrac}[1]{\left[#1\right]}
\newcommand{\abs}[1]{\left\lvert #1 \right\rvert}
\newcommand{\norm}[1]{\left\lVert #1 \right\rVert}
\newcommand{\ovee}{\tikz[baseline=-0.5ex] \node[draw,circle,inner sep=0.1ex]{\tiny$\vee$};}
\begin{document}

\begin{frontmatter}
\title{On the Universal Calibration of Heavy-tailed Combination Tests}
\runtitle{Universal Calibration of Heavy-tailed Combination Tests}

\begin{aug}
\author[A]{\fnms{Parijat}~\snm{Chakraborty}\ead[label=e1]{cparijat@umich.edu}},
\author[A]{\fnms{F.~Richard}~\snm{Guo}\ead[label=e2]{ricguo@umich.edu}},
\author[A]{\fnms{Kerby}~\snm{Shedden}\ead[label=e3]{kshedden@umich.edu}},
\and
\author[A]{\fnms{Stilian}~\snm{Stoev}\ead[label=e4]{sstoev@umich.edu}}

\runauthor{P.~Chakraborty, F.~R. Guo, K.~Shedden and S.~Stoev}

\address[A]{Department of Statistics, University of Michigan\printead[presep={,\ }]{e1,e2,e3,e4}}
\end{aug}

\maketitle

\begin{abstract}
It is often of interest to test a global null hypothesis using multiple, possibly dependent $p$-values by combining their strengths while controlling the type-I error.
Recently, several heavy-tailed combination tests, such as the harmonic mean test and the Cauchy combination test, have been proposed: 
they transform $p$-values into heavy-tailed random variables before combining them into a single
test statistic.  
The resulting tests, which are calibrated under some form of independence assumption 
among the $p$-values, have been shown to be rather robust to dependence asymptotically as the $\alpha$ level gets small. 
Yet, it has remained an open problem to understand this general phenomenon and characterize how such tests behave under dependence.
Using the framework of multivariate regular variation from extreme value theory, we show that for a class of combination tests that are homogeneous, the asymptotic level of the test can be expressed using the angular measure under multivariate regular variation. 
This measure characterizes the dependence of the transformed heavy-tailed variables in their upper tails, or equivalently, the dependence of the $p$-values near zero. 
We use this result to study several tests. 
The harmonic mean test, which coincides with the Pareto linear combination test, is shown to be universally calibrated regardless of the tail dependence; further, this test is shown to be the only one that achieves universal calibration among all homogeneous heavy-tailed combination tests. 
In contrast, the Cauchy combination test is shown to be universally honest but often conservative; the Dunn--\v{S}id{\'a}k correction, also known as the Tippett's method, while being honest, is calibrated if and only if the underlying $p$-values are independent near zero.
These theoretical findings are corroborated with simulations and an application to independence testing with survey data. 
\end{abstract}

\begin{keyword}
\kwd{Cauchy combination test}
\kwd{Global null hypothesis}
\kwd{Harmonic mean $p$-value}
\kwd{Heavy tails}
\kwd{Multivariate regular variation}
\kwd{Pareto}
\end{keyword}

\end{frontmatter}

\section{Introduction}
It is often of interest to test a global null hypothesis using multiple $p$-values, each of which is marginally uniformly distributed on the unit interval if the global null holds. 
Examples abound, including set-based analysis in GWAS \citep{wu2010powerful}, rare-variant analysis in genetics \citep{liu2019acat}, meta-analysis \citep{singh2005combining}, variable and model selection \citep{meinshausen2010stability}, derandomizing data splitting \citep{guo2025rank}, to name a few. 
Depending on the construction of these $p$-values, they are often (though not always) correlated and their dependence structure is typically unknown. 
In this paper, we focus on the setting where the raw data for constructing these $p$-values are unavailable and we must treat the $p$-values themselves as the summary of all the evidence we have against the global null hypothesis. 
Though beyond the scope of this paper, it is worth mentioning that the raw data, when available, can be used to estimate the dependence structure to improve power \citep{guo2025rank}. 

In the above setting, it is natural to consider a \emph{combination test} that outputs a single $p$-value by combining the strengths from multiple $p$-values, an idea that dates back to the early works of \citet{tippett1931methods}, \citet{fisher1948combining}, \citet{good1958significance}, \citet{lancaster1961combination} and \citet{simes1986improved}. Ideally, the combined $p$-value has more power against the global null than any of the original $p$-values. While the early works in this area often assume independence of the $p$-values, the more recent development has shifted towards methods that can control the (family-wise) Type-I error, at least approximately, under a wide variety of dependence among the $p$-values; see, for example, \citet{meng1994posterior,wilson2019harmonic,liu:xie:2020,vovk2020combining,diciccio2020exact} and \citet{vovk2021values}. 

Among the most notable recent developments are the heavy-tailed combination tests, which combine multiple, possibly dependent $p$-values after transforming them to heavy-tailed random variables such as Pareto or Cauchy. 
In particular, \citet{wilson2019harmonic} proposed the harmonic mean combination test, which dates back to \citet{good1958significance}; \citet{liu:xie:2020} developed the Cauchy combination test, which has gained popularity in genomics and genome-wide association studies \citep{liu2019acat,reay2021advancing}. 
The idea behind both of these tests is to transform the $p$-values into heavy-tailed random variables and take a linear combination as the test statistic; the test statistic is then compared to a critical value or mapped to a $p$-value for testing a global null hypothesis. 

Specifically, let $P_1, \dots, P_d$ be the $p$-values associated with $d$ tests, which are distributed according to Uniform$(0,1)$ under the global null hypothesis 
$\mathcal{H}_0$. 
In the context where each $P_i$ is constructed to test a corresponding hypothesis $H_{0,i}$, the global null is taken to be $\mathcal{H}_0 := \bigcap_{i=1}^d \mathcal{H}_{0,i}$.
Throughout the paper, we say a distribution function $F$ is \emph{heavy-tailed} if 
\[ 1-F(x) \sim L(x) x^{-\beta}, \quad  x \to +\infty \]
for a \emph{tail exponent} or \emph{tail index} $\beta > 0$ and a slowly varying function $L$. 
The function $L$ is said to be slowly varying (at infinity) if $L(tx)/L(t)\to 1$ as $t\to\infty$ for every $x>0$; see, e.g., \citet[p.~13]{resnick:1987}. 
The transformed random variables are given by 
\begin{equation} \label{eq:X}
X_i:= F^{-1}(1-P_i), \quad i =1,\dots,d,
\end{equation}
so that a small value of $P_i$ is mapped to the upper tail of $X_i$. 
Then, for some positive weights $w_1, \dots, w_d$, we consider the {\em linear} combination test statistic:
\[T_{F,w} := \sum_{i=1}^d w_i X_i, \ \ \mbox{ where }\ \ \sum_{i=1}^d w_i = 1. \]
For a prespecified level $\alpha \in (0,1)$, the global null $\mathcal{H}_0$ is rejected when $T_{F,w}$ exceeds a corresponding critical value $\tau_{\alpha}$. 
Typically, $\tau_{\alpha}$ is set to be $F^{-1}(1-\alpha)$, the upper $\alpha$ quantile of $F$. 
For a pre-specified level $\alpha \in (0,1)$, we say the combination test is  \emph{calibrated} if $\Pb_0[ T_{F,w}>\tau_{\alpha}] = \alpha$, whereas we say the test is \emph{honest} if $\Pb_0[ T_{F,w}>\tau_{\alpha}] \leq \alpha$. 
Here, $\Pb_0$ means the probability holds with respect to any \emph{fixed} data-generating distribution under $\mathcal{H}_0$. 
It is worth mentioning that, if $T_{F,w}$ is calibrated but one or more $p$-values supplied can be conservative (i.e., following a super-uniform distribution under $\mathcal{H}_0$), then the test is still honest because $T_{F,w}$ is non-increasing in $P_1, \dots, P_d$. 
When a final $p$-value is also desired, the combined $p$-value is given by $P_{F,w} := 1 - F(T_{F,w})$. 
 
Taking $F$ to be the standard Pareto distribution with $\alpha=1$, namely $F(x) = 1-1/x$ for $x >1$, recovers the weighted harmonic mean $p$-value \citep{wilson2019harmonic,good1958significance}.  
Taking $F$ to be the standard Cauchy distribution, namely $F(x) = \pi^{-1} \arctan x + 1/2$ for $x \in \mathbb{R}$, leads to the Cauchy combination test  \citep{liu:xie:2020}.
The Cauchy combination test is calibrated under two extreme dependencies: when the $p$-values are independent or perfectly positively correlated, we have
\[ T_{F,w} \stackrel{d}{=} \left(\sum_{i=1}^d w_i \right)\cdot X_1 = X_1; \]
see also Example \ref{ex:SaS} in the Supplementary Material.  
Moreover, several theoretical and simulation studies have found that this calibration is robust to certain non-trivial dependence in the $p$-values. 
For example, it is established that when every pair of the $p$-values follow a normal copula \citep{liu:xie:2020} or several other copulas \citep{long2023cauchy}, the Cauchy combination test is asymptotically calibrated, as made precise in the following definition.

\begin{definition}[asymptotic calibration and honesty] \label{def:plain}
Given critical values $\tau_{\alpha}$, the combination test $T$ is said to be asymptotically 
\begin{equation*}
\begin{cases} 
\text{calibrated}, \quad & \text{if } \lim_{\alpha\downarrow 0} \alpha^{-1} \Pb_0[ T >\tau_\alpha] =1; \\
\text{honest}, \quad & \text{if }  \limsup_{\alpha\downarrow 0} \alpha^{-1} \Pb_0[ T >\tau_\alpha] \leq 1; \\
\text{conservative}, \quad & \text{if }  \limsup_{\alpha\downarrow 0} \alpha^{-1} \Pb_0[ T >\tau_\alpha] < 1. \end{cases}
\end{equation*}
\end{definition}

In many applications, small levels of $\alpha$ are of interest and the above asymptotic notions of calibration and honesty are useful for approximately controlling the Type-I error. 
Hence, for the rest of the paper, unless stated otherwise, we will simply take calibration and honesty to mean asymptotic calibration and asymptotic honesty, respectively. 

\medskip In this line of work, the foremost question is to identify a family of dependence structure that is as large as possible to plausibly accommodate practical settings, under which the heavy-tailed combination tests remain asymptotically calibrated or honest. 
The earlier results can be generalized to the assumption that $X_1, \dots, X_d$ are pairwise asymptotically independent in their upper tails, defined as follows. 

\begin{definition}[upper tail dependence coefficient and asymptotic independence]
For random variables $X_1, X_2$ with a common distribution function $F$, their (upper tail) dependence coefficient is 
\begin{equation} \label{e:lambda-def-intro}
\lambda(X_1,X_2) := \lim_{p\uparrow 1} \Pb[ F(X_1) > p | F(X_2)> p],
\end{equation}
whenever the limit exists. When $\lambda(X_1,X_2)=0$, we say that $X_1,X_2$ are asymptotically (upper tail) independent; otherwise, they are asymptotically (upper tail) dependent.
\end{definition}

By the assumption of a common distribution function, the definition implies $\lambda(X_1,X_2) = \lambda(X_2,X_1)$. In light of \cref{eq:X}, the dependence coefficient between $X_i$ and $X_j$ equals the bivariate lower-tail dependence coefficient of 
the copula between $p$-values $P_i$ and $P_j$; see also \citet{joe2015dependence}. 
A well-known result dating back to 
\citet{sibuya:1960} shows that random variables that follow any non-degenerate bivariate normal copula are asymptotically independent. 
In fact, as observed in the recent work of \citet{fang2023heavy} and \citet{gui2025aggregating}, the asymptotic calibration of the Cauchy combination test can be established under the assumption of pairwise asymptotic independence of $X_1, \dots, X_d$, which is weaker than assuming a certain copula underlying every pair of $p$-values.

Naturally, this leads to the question whether a heavy-tailed combination test remains calibrated or honest when $X_1, \dots, X_d$ can be pairwise asymptotically dependent, which arises in many statistical contexts (see \cref{sec:examples}).
In this work, we address this question using a general framework for multivariate dependence called \emph{multivariate regular variation}, which allows $X_1, \dots, X_d$ to be asymptotically dependent in their tails, or equivalently, the $p$-values $P_1, \dots, P_d$ to be dependent near zero. 
The core technical tools can be traced to the works of \citet{barbe:fougeres:genest:2006} and \citet{embrechts:lambrigger:wuthrich:2009} in the context of quantifying {\em extreme value of risk}; see also \citet{yuen:stoev:cooley:2020}. 
The concurrent and independent work of \citet{gui:mao:wang:wang:2025} studies both calibration and power of heavy-tailed combination tests within the same framework.
Our work is complementary: we focus on theoretically characterizing the  calibration of homogeneous, heavy-tailed combination tests and also use simulation to study power. 
Our main result, \cref{t:unique_univ_calibrated}, shows that the Pareto linear combination test is the only such test that is universally calibrated under all multivariate regular variation dependence structures.

\section{Multivariate regular variation and asymptotic calibration of combination tests}  \label{sec:MRV}

\subsection{Multivariate regular variation} \label{sec:MRV-def}

In this section, we review the fundamental notion of multivariate regular variation.
This framework, while very well-developed in the literature on extreme value theory 
\citep[see, e.g.,][]{resnick:1987,beirlant2004statistics,dehaan:ferreira:2006,resnick:2007,kulik:soulier:2020,mikosch:wintenberger:2024,resnick:2024}, is perhaps one of the 
lesser-known notions used within the broader statistical community.  
Here, we describe how it provides a natural framework for quantifying the asymptotic calibration of combination tests.
The reader is referred to \cref{sec:supp-MRV} of the Supplementary Material for a brief introduction to multivariate regular variation. 
\begin{definition} A random vector $X=(X_j)_{j=1}^d$ is multivariate regularly varying 
if there exists a positive function $b(t)\to \infty$, and a non-zero Borel measure $\mu$ on 
$\R^d\setminus\{0\}$ such that 
\begin{equation}\label{e:d:RV}
b(t) \Pb[ X \in t\cdot A] \longrightarrow \mu(A) \quad \text{as } t \to \infty
\end{equation}
for all Borel sets $A\subset \R^d\setminus\{0\}$ that are bounded away from $0$ and
$\mu(\partial A) =0$, 
where $\partial A$ is the boundary of $A$.
In this case, we write $ X\in \RV(\R^d, b(\cdot),\mu).$
\end{definition}

The measure $\mu$, which need not be a probability measure, is referred to as the {\em exponent measure} of $X$. 
It characterizes the asymptotic behavior of the {\em extremes} of $X$, and in particular, the asymptotic (in)dependence property of the components of the vector $X$.  
For simplicity, assume that the
vector $X$ is standardized to have asymptotically Pareto marginals as follows:
\[\Pb[X_i>t] \sim \frac{1}{t},\ \ \mbox{ as }t\to\infty,\]
where the symbol `$\sim$' means that the ratio between the two sides is asymptotically one. 
Let $F^{-1}(p) = \inf\{x: F(x) \geq p\}$ denote the inverse of a distribution function $F$. 
Then the (upper) tail-dependence coefficient between $X_i$ and $X_j$ is given by
\begin{align*}
\lambda(X_i,X_j) &= \lim_{p\uparrow 1} \Pb[ X_i > F_{X_i}^{-1}(p) \, |\ X_j > F_{X_j}^{-1}(p) ]\\
& = \lim_{t\to\infty} t\, \Pb[ X_i >t, X_j >t] = \lim_{t\to\infty} t \Pb[ X/t \in A_i \cap A_j] = \mu(A_i\cap A_j),
\end{align*}
where $A_i = \{ x\, :\, x_i >1 \}$. Thus $\mu$ is fundamentally related to $\lambda(X_i,X_j)$, a quantity which characterizes the occurrence of joint (positive) extremes of $X_i$ and $X_j$. For example, if $\lambda(X_i,X_j)=0$, the extremes do not occur simultaneously, and therefore $X_i$ and $X_j$ are said to be \emph{asymptotically (upper tail) independent}.

\begin{remark} \label{r:ai}
As noted in \citet{gui:mao:wang:wang:2025}, it is well-known in the extreme value literature that, 
for heavy-tailed random vectors, bivariate asymptotic independence implies their multivariate regular variation.
In this case, the exponent measure concentrates on the coordinate axes.  While the idea dates back 
to \citet{berman1961convergence}, see, e.g., Eq.~(8.100) in \citet{beirlant2004statistics}, we were unable to find a formal proof of this fact in the literature. For an independent 
treatment and a complete proof, see \cref{th:asym indep & heavy tails implies mrv} in \cref{sec:supp-MRV} of the Supplementary Material.

The dependency among $p$-values assumed in the combination test literature may be cast in the
framework of multivariate regular variation. The seminal paper by \citet{liu:xie:2020} establishes the asymptotic Type-I error control of the Cauchy 
Combination Test under the assumption that the $p$-values arise from a pairwise Gaussian copula. For calibration purposes, 
this assumption is equivalent to assuming a multivariate regularly varying copula with exponent measure $\mu$ concentrated on the axes. This has also been observed in the recent work of \citet{gui:mao:wang:wang:2025}.  
\end{remark}
 
In the rest of this section, we present a key technical lemma that allows us to establish the asymptotic 
calibration properties of \emph{any} homogeneous combination test (\cref{l:homogeneous}).
This result relies on the angular (spectral) decomposition of the exponent measure (\cref{thm:polar}).
We shall start, however, with a fundamental result on the general structure of the exponent measure of
a regularly varying random vector. Its proof can be found in many comprehensive expositions
in the literature \citep[see e.g., Theorem 3.1 in][]{lindskog:resnick:roy:2014}.  
See also the monographs by \citet{resnick:1987,resnick:2007,resnick:2024}, a more 
recent treatment \citep[in Theorem 2.1.3 of][]{kulik:soulier:2020}, and the many references therein.

\begin{theorem}[Tail index theorem] \label{thm:tail-index} Let $X = (X_i)_{i=1}^d$ be a random vector in $\R^d$.

\begin{enumerate}
    \item [{\em (i)}]  If  $X\in \RV(\R^d,b(\cdot),\mu),$ then: 
\begin{enumerate}
    \item There exists $\beta>0$, referred to as the {\em tail index} of $X$,
    such that $b(t) = \ell(t) t^{\beta}$, for some slowly varying function $\ell:(0,\infty)\to (0,\infty)$.
    \item The measure $\mu$ is $\beta$-homogeneous, i.e., for all $t>0$, and all Borel sets 
    $A$ in $\R^d$ that are bounded away from $0$, we have
    \begin{equation}\label{e:thm:tail-index-mu-scaling}
    \mu(t A) = t^{-\beta} \mu(A) <\infty.
    \end{equation}
    \item The tail index $\beta$ is unique in the sense that if it also holds that $X \in \RV(\R^d,c(\cdot),\nu)$ with $c(t) = \ell_c(t) t^\gamma$ for a slowly varying function $\ell_c$, then 
\[ \beta = \gamma,\ \ \frac{b(t)}{c(t)} \to a>0,\ \mbox{ and }\ a \mu(A) =  \nu(A).\]
\end{enumerate}

\item [{\em (ii)}] Conversely, for every non-zero Borel measure $\mu$ on $\R^d\setminus\{0\}$ that satisfies
\eqref{e:thm:tail-index-mu-scaling} for some $\beta>0$, there exists  a random vector $X\in \RV(\R^d,b(\cdot),\mu)$, with $b(t) = \ell(t) t^\beta$ for a slowly varying function $\ell$.
\end{enumerate}
\end{theorem}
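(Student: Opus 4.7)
The plan is to extract the scaling relation of the exponent measure $\mu$ directly from the defining convergence, identify the resulting functional equation as the multiplicative Cauchy equation, and then invoke Karamata's representation to get the $b(t)=\ell(t)t^\beta$ form. For the converse in part (ii) I would construct an explicit random vector using the polar decomposition of $\mu$ guaranteed by Theorem~\ref{thm:polar}.

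For part (i), I would first fix a Borel set $A$ bounded away from the origin with $\mu(\partial A)=0$ and $0<\mu(A)<\infty$. For any $s>0$, the trivial rewriting
\[
b(t)\,\P[X\in t(sA)] \;=\; \frac{b(t)}{b(ts)}\cdot b(ts)\,\P[X\in (ts)A]
\]
shows, on letting $t\to\infty$, that $b(t)/b(ts)$ must converge to a positive finite limit $g(s)=\mu(sA)/\mu(A)$. Applying the same identity to $s_1 s_2$ yields the multiplicative Cauchy equation $g(s_1 s_2)=g(s_1)g(s_2)$, and since $g$ is a ratio of measures of Borel sets it is Borel measurable. The measurable solutions on $(0,\infty)$ are exactly $g(s)=s^{-\beta}$ for some $\beta\in\R$, which simultaneously identifies $b(ts)/b(t)\to s^\beta$ (regular variation of $b$ with index $\beta$, hence $b(t)=\ell(t)t^\beta$ via Karamata's representation) and gives the scaling $\mu(sA)=s^{-\beta}\mu(A)$ on continuity sets. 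A routine monotone-class argument then extends the scaling identity to every Borel set bounded away from $0$. To see that $\beta>0$, I would apply the scaling to $S_r:=\{x:\|x\|\geq r\}$ for any chosen norm: $S_1$ is bounded away from $0$ so $\mu(S_1)<\infty$, and $\bigcap_{r>1}S_r=\emptyset$ in $\R^d\setminus\{0\}$ forces $\mu(S_r)=r^{-\beta}\mu(S_1)\downarrow 0$ by continuity from above of the Radon measure $\mu$, which rules out $\beta\leq 0$. Part (i)(c) follows by a parallel scaling argument: evaluating both $b(t)\P[X\in tA]$ and $c(t)\P[X\in tA]$ at a common test set of positive $\mu$- and $\nu$-mass forces $b(t)/c(t)\to a\in(0,\infty)$ and hence $\nu=a\mu$; the equality $\gamma=\beta$ is then immediate from the uniqueness of the index of a regularly varying function.

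For part (ii), the polar decomposition supplied by Theorem~\ref{thm:polar} writes $\mu$ (up to normalization) as $\beta r^{-\beta-1}\,dr\,\sigma(d\theta)$ in polar coordinates, for a finite angular measure $\sigma$ on the unit sphere $S$. I would then take $R$ Pareto with $\P[R>r]=r^{-\beta}$ for $r\geq 1$, take $\Theta$ independent with distribution $\sigma/\sigma(S)$ on $S$, set $X:=R\Theta$, and choose $b(t):=\sigma(S)\,t^\beta$. A direct computation in polar coordinates shows $b(t)\P[X\in tA]\to\mu(A)$ for every Borel set $A$ bounded away from $0$ with $\mu(\partial A)=0$, since such an $A$ lies in some $S_\varepsilon$ and only the upper tail of $R$ contributes.

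The most delicate point, in my view, is obtaining the convergence $b(t)/b(ts)\to g(s)$ \emph{simultaneously} for all $s>0$ and then restricting $g$ to be a genuine power function. Both rest on a careful use of the regularity of $\mu$ (measurability of $g$) and on the existence of enough $\mu$-continuity sets to run the Cauchy equation argument on a large enough family. These are standard bookkeeping steps in the MRV literature but are the heart of the theorem: without measurability one could not exclude pathological multiplicative solutions, and without the density of $\mu$-continuity sets one could not transport the defining convergence to a genuine scaling identity on all relevant sets.
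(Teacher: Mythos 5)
Your proposal is correct, and it is essentially the standard argument. Note that the paper does not actually prove part (i): it is stated as a known result with references to \cite{lindskog:resnick:roy:2014}, \cite{resnick:1987} and \cite{kulik:soulier:2020}, and your Cauchy-functional-equation/Karamata route is precisely the proof found in those sources. For part (ii), your construction $X=R\Theta$ with $R$ a $\beta$-Pareto radius independent of $\Theta$ drawn from the normalized angular measure is exactly the paper's own constructive proof via the generalized Breiman lemma (Example \ref{ex:Breiman}), so there is no divergence there either. Two small points to tighten if you write this out in full: (a) the identity $b(t)/b(ts)\to\mu(sA)/\mu(A)$ requires both $A$ and $sA$ to be $\mu$-continuity sets, so you should work with $A=\{\|x\|>1\}$ and note that only countably many radii fail to be continuity radii, then upgrade from ``all $s$ outside a countable set'' to all $s>0$ using monotonicity of $s\mapsto\mu(\{\|x\|>s\})$ before solving the Cauchy equation; you flag this but it is the one step that genuinely needs the extra sentence. (b) In (i)(c), with the convention $b(t)/c(t)\to a$ one gets $\mu=a\nu$ rather than $\nu=a\mu$; your version matches the paper's (slightly inconsistent) statement, so this is a normalization quirk inherited from the theorem rather than an error on your part.
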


Part (i)~c of the theorem allows us to write $X \in \RV_{\beta}(\R^d,b(\cdot),\mu)$ that signifies the tail index $\beta$.  
Further, Part (i)~b shows that the measure $\mu$ is, up to rescaling, also unique and independent of the choice of the sequence $b(\cdot)$.
While there are several equivalent formulations of regular variation, the next one in terms of polar coordinates will be useful to us. 

\begin{theorem}\label{thm:polar}
We have $X\in \RV_\beta(\R^d,b(\cdot),\mu)$ if and only if 
for some (and hence any) norm $\|\cdot\|$ in $\R^d$, the following
two conditions hold:

\begin{enumerate}
    \item For a slowly varying function $L$, it holds that 
    $$
\Pb\left( \|X\| >t \right) \sim L(t)t^{-\beta},\ \ t\to\infty.
$$
\item As $t\to +\infty$, we have
\begin{equation}\label{e:thm:polar-ii}
\frac{X}{\|X\|} \, \bigg\vert\, \{\|X\|>t\} \stackrel{d}{\longrightarrow} \Theta,
\end{equation}
where $\Theta$ is a random vector taking values in the unit sphere $S_{\|\cdot\|}:=\{x\in \R^d: \|x\|=1\}$.
\end{enumerate}
Moreover, by adopting the polar coordinates $\Psi:\R^d\setminus\{0\} \to S_{\|\cdot\|} \times (0,\infty)$
where $\Psi(x) := (r(x),\theta(x))$, with $r(x):= \|x\|$ and $\theta(x) := x/\|x\|$, we have 
\begin{equation} \label{eqs:polar}
\mu\circ \Psi^{-1}(dr,d\theta) = c_\mu \, \beta\, r^{-\beta-1} dr \sigma(d\theta), 
\end{equation}
where $c_{\mu}:= \mu(\{ r>1\})$ and $\sigma$ is the probability measure of $\Theta$ in \eqref{e:thm:polar-ii}. 
\end{theorem}

This result shows that the measure $\mu$, when viewed in polar coordinates, factors into 
the product of a radial power-law type component and an angular component. Essentially it tells us that radially $X$ 
behaves like a heavy-tailed random variable and \emph{when} $\|X\|$ is extreme, the distribution of the
directions $X/\|X\|$ is asymptotically governed by $\sigma$. As a result, $\sigma$ is called the \emph{angular probability measure} associated with $\mu$. 
By analogy with the theory on infinitely divisible laws, 
$\sigma$ is also referred to as the {\em spectral measure} of $\mu$.
The angular measure enables us to evaluate the tail probability of a homogeneous function of $X$, as given by the next result. 
A function $h: \R^d \to \R$ is \emph{$1$-positively-homogeneous} if $h(a x) = a h(x)$ holds for every $a > 0$. 
In what follows, we use $\R_+$ to denote the non-negative real line and $\R_+^{d}$ to denote the $d$-dimensional non-negative orthant. 

\begin{lemma}[see Proposition 2.5 in \citealp{jansen:neblung:stoev:2023}]
\label{l:homogeneous}
Let $X \in  \RV_{\beta}(\R^d,b(\cdot),\mu)$ and let $\sigma$ be the corresponding angular probability measure. 
For any continuous, $1$-positively-homogeneous function $h:\R^d\to \R_+$, we have
$$
b(t) \Pb[ h(X) > t ] \to  c_\mu \E [h(\Theta)^\beta], \ \ \mbox{ as } t\to +\infty,
$$
where $c_\mu$ and $\Theta$ are given by \cref{thm:polar}.
\end{lemma}

We end this section with the construction of a multivariate regularly varying vector $X$ that can realize all possible asymptotic dependence structures. The following example furnishes a constructive proof of the converse claim (ii) in \cref{thm:tail-index}.

\begin{lemma}[Generalized Breiman's lemma] \label{lem:Breiman} 
Let $Y$ be a random variable independent of a random vector $W=(W_i)_{i=1}^d$. 
Suppose $Y$ is non-negative and it has a heavy, regularly varying right tail, namely $\Pb[ Y > t ] \sim L(t) t^{-\beta}$ for some slowly varying function $L$. 
Further, suppose $\E[ \|W\|^{\beta + \epsilon} ] <\infty$ for some $\epsilon>0$. 
Then, it holds that $ X:= (Y W_i)_{i=1}^d$
is multivariate regularly varying with exponent $\beta$.
Its angular measure in \eqref{e:thm:polar-ii} is identified by 
\begin{equation}\label{e:lem:Breiman}
\Pb[ \Theta \in A ] = \frac{1}{\E[ \|W\|^\beta ]} \E\Big[ 1_A\Big(\frac{W}{\|W\|}\Big) \|W\|^\beta\Big]
\end{equation}
for every Borel set $A \in S_{\|\cdot\|}$. 
\end{lemma}

For this result, see, e.g., Corollary 2.1.14 in \citet{kulik:soulier:2020}. 
This is a multivariate extension of the Breiman's lemma \citep[Lemma 1.4.3 in ][]{kulik:soulier:2020}, which was 
originally formulated for $d=1$ and $\beta \in (0,1)$ \citep[Proposition 2 in][]{breiman:1965}.
Conversely, to show claim (ii) of \cref{thm:tail-index}, let $\mu$ be an arbitrary measure that satisfies \cref{e:thm:tail-index-mu-scaling}. 
Let $W \sim \sigma$ with angular measure $\sigma$ identified by \eqref{eqs:polar} and let $Y$ be Pareto with $\Pb[Y>t] = 1/t^\beta$ for $t\ge 1$. Then, by \cref{thm:polar} we have $X \sim \RV_\beta(\R^d,b(\cdot),\mu)$ with $b(t) = c_{\mu} t^{\beta}$.

\subsection{Examples of multivariate regular variation} \label{sec:examples}

Multivariate regular variation is typically the rule rather than an exception for random vectors with heavy-tailed marginals. 
To make this intuition concrete, in this section we describe some examples that satisfy multivariate regular variation; see also \cref{sec: examples supp} of the Supplementary Material for more instances. 
To the best of our knowledge, there is no simple, non-pathological construction of a heavy-tailed random vector that is not multivariate regularly varying.

 \begin{example}[multivariate $t$-distribution] \label{ex:mult-t} 
Let $\nu>0$ and $G$ be a Gamma-distributed random variable with shape $\nu/2$ and rate $1/2$. Also, let $W\sim {\cal N}(0,\Sigma)$ be independent 
 of $G$. Then the random vector $ X :=  W / \sqrt{G/\nu} $
follows a multivariate $t$-distribution with $\nu$ degrees of freedom and shape $\Sigma$.
 Since $Y:= (G/\nu)^{-1/2}$ is heavy-tailed with exponent $\nu$, the multivariate $t$ model is a particular instance of Breiman's construction:  
\cref{lem:Breiman}
implies that  $X=Y W \in \RV_\nu(\R^d,b(\cdot),\mu)$ with angular measure $\sigma$ given by \cref{e:lem:Breiman}. 
Unless $W$ is concentrated on a lower-dimensional 
subspace, the support of $\sigma$ is the {\em entire unit sphere}. 
In fact, the upper tail dependence coefficient of the 
$t$-copula, namely $\lambda(X_i,X_j)$, can be written as
\begin{equation}\label{e:ex:mult-t-lambda}
\lambda(X_i,X_j) = 2 F_{t_{\nu+1}}\left( - \sqrt{\frac{(\nu+1)(1-\rho_{ij})}{(1+\rho_{ij})}} \right),
\end{equation}
where $\rho_{ij} = {\rm Corr}(W_i,W_j)$ and $F_{t_{\nu+1}}$ is the distribution function of the standard univariate
$t$-distribution with $(\nu +1)$ degrees of freedom; see, e.g., \citet[][p.~64]{joe2015dependence}. 
Thus, $X_i$ and $X_j$ are always asymptotically dependent, even when $\rho_{ij}=0$; 
for any fixed $\rho_{ij}$, $X_i$ and $X_j$ approach asymptotic independence only when $\nu \rightarrow +\infty$, upon which the multivariate $t$-distribution converges to a multivariate normal. 
\end{example}

\begin{example}[heavy-tailed factor models] \label{ex:linear_factor_model} 
Let $\beta>0$ and $Z_1, \dots, Z_p$ be iid {\em non-negative} \footnote{The example extends to random variables with two-sided heavy tails, but
the formula for the angular measure is slightly more involved.} random variables with Pareto-type tails:
$$
\Pb[ Z_j > t] \sim t^{-\beta}, \quad \mbox{ as }t\to +\infty.
$$
Let $A \in \R^{d \times p}$ be an arbitrary constant matrix with non-zero columns $a_1, \dots, a_p$. 
Then, with $Z:= (Z_j)_{j=1}^p$, we have
$$
X := A Z \in \RV_\beta(\R^d, b(t)=t^{\beta},\mu),
$$
where the associated angular measure is given by
\begin{equation}\label{e:ex:linear_factor_model}
\sigma(A) = \frac{1}{\sum_{k=1}^p \|a_k\|^\beta} \sum_{j=1}^p \|a_j\|^\beta \,1_A\Big(\frac{a_j}{\|a_j\|}\Big),
\end{equation}
where $A$ is any Borel set in $S_{\|\cdot\|}$; 
see also Corollary 2.1.14 in \citet{kulik:soulier:2020} for a more general result.
\end{example}

\cref{ex:linear_factor_model} illustrates the {\em single large jump heuristic} for sums of independent 
heavy-tailed  factors:  
the vector $X = Z_1 a_1 + \cdots + Z_p a_p$ is {\em extreme} in norm when
one and only one of the independent factors is extreme.  
Hence, as $t \to +\infty$, the angular distribution of $X/\|X\|$ given $\|X\|>t$ converges to a discrete measure with point-masses given
by the directions $a_j/\|a_j\|$ ($j=1,\dots,p$) and each corresponding probability proportional to $\|a_j\|^\beta$.

\subsection{A general approach to calibrating heavy-tailed combination tests} \label{sec:MRV-calibration}
Let $P=(P_i)_{i=1}^d$ be a random vector with Uniform$(0,1)$ marginal distributions, which consists of $p$-values under a null hypothesis. 
Consider a heavy-tailed distribution $F$ with tail index 1, namely  
\begin{equation} \label{eqs:pareto-F}
\bar{F}(x) :=   1-F(x) \sim a/x,\ \ \mbox{ as }x\to +\infty
\end{equation}
for $a>0$. Let us transform the $p$-values into $X = (X_i)_{i=1}^d$ by \cref{eq:X}.
Given a vector of weights $w_j\ge 0$ such that $\sum_{i=1}^d w_i = 1$, 
  consider the {\em linear combination test} statistic
  \begin{equation}\label{e:Tw}
  T_w(X):= \sum_{i=1}^d w_i X_i.
  \end{equation}
Thus, small $p$-values correspond to large values of $T_w$. 
When $\bar{F}(x) = \tfrac{1}{2} - \arctan(x) / \pi \sim 1 / (\pi x)$ is the standard Cauchy distribution, this leads to the Cauchy Combination Test \citep{liu:xie:2020}. 
When $\bar{F}(x) = x^{-1}$ is the standard Pareto with unit tail index, this recovers a test equivalent to the {\em harmonic mean} $p$-value \citep{wilson2019harmonic,good1958significance}. 
In both cases, either under independence or asymptotic independence of $X_1, \dots, X_d$, it has been shown that 
\begin{equation}\label{e:Tw-calibrated}
\frac{\Pb\{ T_w(X)>t \}}{\Pb(X_1>t)} \to 1, \quad t \to +\infty.
\end{equation}

As noted in \cref{r:ai}, the bivariate copula conditions in \citet{liu:xie:2020,long2023cauchy} imply that $X_1, \dots, X_d$ are asymptotically independent and the vector $X$ is multivariate regular varying (with tail index 1 when $F$ is Cauchy or Pareto). It follows that the exponent measure $\mu$ of $X$ is the same as that of a vector composed of \emph{iid copies} of $X_1$. This underlies the calibration of $T_w(X)$, for which the dependence among $X_1, \dots, X_d$ can be ignored. 

However, \cref{e:Tw-calibrated} need not hold anymore when $X$ is regularly varying but $X_1, \dots, X_d$ are \emph{asymptotically dependent}. Our next result computes the limit in terms of the angular probability measure. We use $(\cdot)_+$ to denote the positive part of a variable. 
  
\begin{proposition} \label{p:general} 
Let $X = (X_i)_{i=1}^d\in \RV_\beta(\R^d,b(\cdot),\mu)$ such that for $i=1,\cdots,d$, it holds that 
\begin{equation}\label{e:p-general}
   b(t) \Pb[ X_i > t ] \to c>0, \quad t \to +\infty.
\end{equation}
Let $\Theta \in S_{\|\cdot\|}$  be distributed according to the angular
probability measure $\sigma$ of $X$.
Then, we have $\E[ (\Theta_1)_+^\beta ]=\cdots= \E[(\Theta_d)_+^\beta] >0$ and for any $w_1, \dots, w_d \geq 0$ such that $\sum_{i=1}^{d} w_i > 0$, 
  \begin{equation}\label{e:p-general-limit}
  \frac{\Pb[T_w(X)>t]}{\Pb[X_1>t]} \to \frac{1}{\E (\Theta_1)_+^\beta}  \E \Big ( \sum_{j=1}^d w_j \Theta_j\Big)_+^\beta, \quad t \rightarrow +\infty. 
  \end{equation}
  \end{proposition}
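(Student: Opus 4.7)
The plan is to apply the key Lemma \ref{l:homogeneous} twice, once to the coordinate-projection function and once to the (positive part of the) linear combination, and then take the ratio.

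First, I would verify that the equality $\E[(\Theta_1)_+^\beta] = \cdots = \E[(\Theta_d)_+^\beta]$ holds and that this common value is strictly positive. For each $j$ the map $h_j: \R^d \to \R_+$ defined by $h_j(x) = (x_j)_+$ is continuous and $1$-homogeneous, so Lemma \ref{l:homogeneous} yields
\[
b(t) \P[X_j > t] = b(t) \P[h_j(X) > t] \longrightarrow c_\mu \, \E[(\Theta_j)_+^\beta]
\]
as $t \to \infty$. By the assumption \eqref{e:p-general} this limit equals $c > 0$ for every $j = 1, \dots, d$, so $\E[(\Theta_j)_+^\beta] = c/c_\mu$ is independent of $j$ and strictly positive.

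Next, I would apply the same lemma to the linear-combination functional. Define $h_w: \R^d \to \R_+$ by
\[
h_w(x) := \Bigl( \sum_{j=1}^d w_j x_j \Bigr)_+.
\]
This function is continuous, non-negative, and (since the positive part is $1$-homogeneous under positive scalars and the linear form is linear) $1$-homogeneous. Moreover, for any $t > 0$, the event $\{T_w(X) > t\}$ coincides with $\{h_w(X) > t\}$ because $t > 0$ forces $T_w(X)$ to be positive. Hence, again by Lemma \ref{l:homogeneous},
\[
b(t) \P[T_w(X) > t] \longrightarrow c_\mu \, \E\Bigl[ \Bigl( \sum_{j=1}^d w_j \Theta_j \Bigr)_+^\beta \Bigr].
\]

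Finally, I would combine the two limits. Writing
\[
\frac{\P[T_w(X) > t]}{\P[X_1 > t]} = \frac{b(t) \P[T_w(X) > t]}{b(t) \P[X_1 > t]}
\]
and sending $t \to \infty$, the denominator converges to $c_\mu \, \E[(\Theta_1)_+^\beta] > 0$ and the numerator to the expression in the previous display. Taking the ratio and canceling $c_\mu$ yields \eqref{e:p-general-limit}. There is essentially no genuine obstacle here; the only points that require a moment's thought are (i) the mild observation that for $t > 0$ one may replace $T_w(X)$ by its positive part without changing the tail event, which is what allows the non-negative-function hypothesis of Lemma \ref{l:homogeneous} to apply, and (ii) the strict positivity of $\E[(\Theta_1)_+^\beta]$, which is needed to legitimize the division and which follows directly from hypothesis \eqref{e:p-general}.
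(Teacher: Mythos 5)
Your argument is correct and is essentially identical to the paper's proof: both apply Lemma \ref{l:homogeneous} to the $1$-homogeneous functions $h_j(x)=(x_j)_+$ and $h_w(x)=\bigl(\sum_j w_j x_j\bigr)_+$, use hypothesis \eqref{e:p-general} to identify the common positive value of $\E[(\Theta_j)_+^\beta]$, and conclude by taking the ratio of the two limits. No changes are needed.
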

  \begin{proof} Let $w_1,\dots,w_d$ be fixed. Consider the following non-negative, continuous, $1$-positively-homogeneous functions
  $$
  h(x) =\Big(\sum_{i=1}^d w_i x_i \Big)_+ \mbox{ and }\ \ h_i(x):= (x_i)_+,\ i=1,\cdots,d,
  $$
For every $t>0$, using the fact that $x > t$ if and only if $(x)_+ > 0$, it holds that 
  $$
  \Pb[ T_w(X) >t] = \Pb[ h(X)>t] \ \mbox{ and }\ \ \Pb[X_i>t] = \Pb[ h_i(X) >t ],\; i=1,\dots,d.
  $$
  \cref{l:homogeneous} implies that as $t \to +\infty$,
\[b(t) \Pb[T_w(X) >t] \to c_\mu \E[h(\Theta)^\beta] \ \mbox{ and } \ \ b(t)\Pb[X_i>t]\to c_\mu \E[ h_i(\Theta)^\beta],\, i=1,\dots,d.\]
Assumption \eqref{e:p-general} entails
  $\E[ h_i(\Theta)^\beta] = \E[ (\Theta_i)_+^\beta] = c/c_{\mu}>0$ for $i=1,\dots,d$.
  Further, taking the ratio of the limits in the display above, we obtain
  \eqref{e:p-general-limit}.
  \end{proof}
  
We remark that \cref{p:general} is not new: the limit behavior of a sum of {\em dependent} 
  heavy-tailed variables has been considered in 
  the context of financial or insurance risk. 
  For example, the seminal 
  work of \citet{barbe:fougeres:genest:2006} establishes similar formulae to
  \eqref{e:p-general-limit}. See also Theorem 4.1 in \citet{embrechts:lambrigger:wuthrich:2009} 
  and \citet{yuen:stoev:cooley:2020} in the context of quantifying extreme Value-at-Risk. 
  
\subsection{Universal calibration and honesty}
For the rest of this paper, we identify any heavy-tailed combination test with a heavy-tailed distribution $F$ and a combination function $h$, the latter of which is typically the linear combination \cref{e:Tw} but can also take other forms. In \cref{sec:Pareto-test}, we will focus on the class of tests where $h$ is homogeneous. The following definition categorizes heavy-tailed combination tests according to their asymptotic calibration property under multivariate regular variation; compare it with \cref{def:plain}.

\begin{definition} \label{d:universal} 
Let $(P_i)_{i=1}^{d}$ be a random vector with Uniform$(0,1)$ margins. 
Let $F$ be a heavy-tailed distribution function and $h: \R^{d} \to \R_+$ be a combination function. 
Define $X_i := F^{-1}(1-P_i)$ for $i=1,\dots,d$. 
Then, the $(F,h)$-combination test is 
\[\begin{cases} \text{universally (asymptotically) calibrated}, & \quad \text{if } \lim_{t \to +\infty} \Pb(h(X) > t) / \Pb(X_1 > t) = 1, \\
\text{universally (asymptotically) honest}, & \quad \text{if } \limsup_{t \to +\infty} \Pb(h(X) > t) / \Pb(X_1 > t) \leq 1, \\
\text{universally (asymptotically) conservative}, & \quad \text{if } \limsup_{t \to +\infty} \Pb(h(X) > t) / \Pb(X_1 > t) < 1, \end{cases} \]
whenever $X = (X_i)_{i=1}^{d}$ is multivariate regularly varying.
\end{definition}
  
Throughout, we omit `asymptotically' when referring to these properties. 
For the next two results, we apply \cref{p:general} to characterize the calibration of Pareto and Cauchy linear combination tests, for which we assume $X$ is multivariate regularly varying but allow $X_1,\dots,X_d$ to be asymptotically dependent. 
We first show that the Pareto linear combination test is universally calibrated regardless of the asymptotic dependence structure of $X_1, \dots, X_d$.
 
\begin{corollary}[Pareto linear combination test] \label{c:PCT}
Let $F$ be the Pareto distribution with tail index 1, namely $\bar{F}(x) = 1/x$ for $x \geq 1$. 
For any $w_1,\dots,w_d \geq 0$ with $\sum_{i=1}^d w_i=1$, the $(F,T_w)$-combination test is universally calibrated. 
\end{corollary}

\begin{proof}  
Since $X$ has positive coordinates, \cref{e:thm:polar-ii} implies $\Theta_i \geq 0$ for $i=1,\dots,d$. 
Applying \cref{p:general} with $\beta=1$, we obtain
\[ \lim_{t\to +\infty} \frac{\Pb[T_w(X)>t]}{\Pb[X_1>t]} = \frac{1}{\E[ \Theta_1 ] } \sum_{j=1}^d w_j \E[\Theta_j] =
 \sum_{j=1}^d w_j = 1,\]
where we used $\E[ \Theta_1] = \cdots = \E[\Theta_j]>0$.
 \end{proof}

In contrast, the Cauchy combination test is always honest and typically conservative.
 
 \begin{corollary}[Cauchy linear combination test] \label{c:CCT} 
Let $F$ be the Cauchy distribution, namely $\bar{F}(x) = \tfrac{1}{2} - \arctan(x)/\pi$ for $x \in \mathbb{R}$. 
For any $w_1,\dots,w_d \geq 0$ with $\sum_{i=1}^d w_i=1$, the $(F, T_w)$-combination test is universally honest, i.e.,
\[ \lim_{t \to +\infty} \frac{\Pb[ T_w(X)>t]}{\Pb[X_1>t]} \le 1,\]
where the equality holds if and only if $\Theta \in (-\infty,0]^d \cup [0,\infty)^d$ holds with probability one with respect to the angular measure of $X$. 
\end{corollary}

\begin{proof}
Applying \cref{p:general} with $\beta=1$, we have
     \begin{align}
         \lim_{t\to +\infty} \frac{\Pb[T_w(X)>t]}{\Pb[X_1>t]} = \frac{1}{\E (\Theta_1)_+}  \E \Big ( \sum_{i=1}^d w_i \Theta_i\Big)_+ \label{e: +ve part calibration limit}
     \end{align}
     By the convexity of $x \mapsto x_+$ and Jensen's inequality, we further have
     \begin{align*}& \left (\sum_{j=1}^d w_j \Theta_j \right)_+ \leq \sum_{j=1}^d w_j (\Theta_j)_+\\
     & \implies \E \left ( \sum_{j=1}^d w_j \Theta_j \right)_+ \leq \sum_{j=1}^d w_j \E (\Theta_j)_+=\rbrac{\sum_{j=1}^dw_j}\E(\Theta_1)_+=\E (\Theta_1)_+.
     \end{align*}
where we used $\E(\Theta_1)_+=\dots=\E(\Theta_d)_+ > 0$.
Thus, the limit in \cref{e: +ve part calibration limit} is upper bounded by 1. 
For the proof of the condition for equality, see \cref{sec:proof-cauchy} of the Supplementary Material.
\end{proof}

\cref{c:CCT} implies that under many dependence models, such as the multivariate $t$-copula, the Cauchy combination test is {\em strictly conservative} (see also \cref{sec:examples}). 
This corroborates the empirical findings presented in Tables~2 and~S1 of \citet{gui2025aggregating}: for $p$-values generated from a multivariate $t$-copula with an exchangeable covariance, the Cauchy combination test is conservative under smaller positive or negative correlation $\rho$; 
meanwhile, the test becomes asymptotically calibrated when $\rho \to 1$, which drives $\Theta_1,\dots,\Theta_d$ to be simultaneously positive or negative.

\medskip The function $T_w(\cdot)$ is a special case of  \emph{homogeneous} combination functions, which can be studied with the same tool. 
The next result extends \cref{p:general} with virtually the same proof.
  
\begin{corollary}\label{c:extend}
Let $h:\R^d\to \R_+$ be a continuous and $1$-positively-homogeneous function. Then, under the assumptions of \cref{p:general}, we have
\[\frac{\Pb[h(X)>t]}{\Pb[X_1>t]} \to \frac{1}{\E[ (\Theta_1)_+^\beta ]} 
  \E[ h(\Theta)^\beta], \quad t \to +\infty.\]
\end{corollary}

Many commonly used methods for combining $p$-values or test statistics, such as $\min$, $\max$ and the generalized means $(\tfrac{1}{d} \sum_{i} x_i^p)^{1/p}$, are such homogeneous functions. 
In \cref{sec:frechet}, we also study the max-linear combination function of this type. 

\section{Characterizing universal calibration} \label{sec:Pareto-test}
In the previous section, we showed that the Pareto linear combination test is universally calibrated regardless of the dependence structure of the $p$-values, provided that the transformed vector $X$ is multivariate regularly varying. 
In this section, we will characterize this property for the class of $(F,h)$-combination tests when $h$ is homogeneous and further show that the Pareto linear combination test is the only test in this family that achieves universal calibration. 
To prove this, the following subsection first establishes an auxiliary result on integrals under linear constraints. 

\subsection{On integrals under linear constraints}
\label{sec:integral_functionals}
Let $(S,{\cal S})$ be a measurable space and let ${\cal M}(S)$ be the set of all finite positive measures on the space.
We also use ${\mathbb B}_+(S)$ to denote the class of all real-valued, non-negative, bounded measurable functions on the space. 
For $\varphi \in {\cal M}(S)$ and $f\in{\mathbb B}_+(S)$,
we shall write
$$
 (f,\varphi) :=\int_S f(x) \varphi(dx).
$$

\begin{definition}[Anti-dominance condition]
We say that a finite set of non-negative 
functions ${\cal G}:= \{g_i,\ i=1,\cdots,d\} \subset {\mathbb B}_+(S)$ satisfies the 
anti-dominance condition if 
for all ${\cal I},\ \emptyset \not ={\cal I} \subsetneq \{1,\cdots,d\}$, we have
$$
\sum_{i\in {\cal I}} \lambda_i g_i(\cdot ) \not \le 
\sum_{j\in {\cal I}^c} \lambda_j g_j(\cdot), 
$$
for all $\lambda_i\ge 0$ such that 
$\sum_{i\in {\cal I}}\lambda_i>0$.
\end{definition}

A finite set of functions $\mathcal{G}$ satisfies the condition above if no subset of the functions can be dominated by the complementary subset of functions, in terms of non-negative linear combinations. Our characterization of universal calibration relies on the following general result, which may be of independent interest; see \cref{sec:char} of the Supplementary Material for its proof. 

\begin{theorem}\label{thm:characterization}
Let ${\cal G}= \{g_1,\cdots,g_d\}$ be a finite set of functions in ${\mathbb B}_+(S)$.  For a constant $c>0$, define the set of positive finite measures:
$$
{\cal M}_c({\cal G}):= \{ \varphi \in {\cal M}(S)\, :\, (g,\varphi) = c,\ \forall g\in {\cal G}\}.
$$
Suppose that for some $\{x_1,\cdots,x_d\} \subset S$,  
the matrix $G = (G_{ij})_{d \times d} := (g_i(x_j))$ is non-singular and  
the vector $(1,\dots,1)^{\T} \in \R^d$ belongs to the interior of the cone
\begin{equation}\label{a:interior_condition}
    G (\R_+^d):= \{ y:\, y =  Gz,\  z \in \R_+^d\}.
\end{equation}
If for some $h\in {\mathbb B}_{+}$, $(h,\varphi) = c$ holds for all $\varphi \in {\cal M}_c({\cal G})$, then we have
\begin{equation}\label{e:thm:characterization}
h(\cdot) = \sum_{i=1}^d \lambda_i g_i(\cdot), \quad \text{with } \lambda \in \mathbb{R}^{d} ~\text{ such that } \sum_{i=1}^d\lambda_i = 1.
\end{equation}
Additionally, if $\mathcal{G}$ also satisfies the anti-dominance condition, then 
\eqref{e:thm:characterization} holds with $\lambda \in \mathbb{R}_{+}^{d}$.
\end{theorem}

\subsection{Characterization} \label{sec:universal}
We now characterize universal calibration for the family of $(F,h)$-combination tests where $h$ is homogeneous. Since $(F,h)$ and $(F(\cdot / c), c h)$ for any constant $c>0$ lead to equivalent combination tests, 
without loss of generality, when $F$ has tail index $\beta$, we will assume $\bar{F}(x) \sim x^{-\beta}$ as $x \to +\infty$.

\begin{theorem}\label{t:unique_univ_calibrated}
Let $F$ be a heavy-tailed distribution function such that $\bar{F}(x) \sim 1/x$ as $x \to +\infty$. 
Let $h: \R^{d} \rightarrow \R_+$ be a continuous, 1-positively-homogeneous function.
Then, the $(F,h)$-combination test is universally calibrated if and only if 
\[ h(x) = \sum_{i=1}^d w_i x_i\]
for some $w_1, \dots w_d \ge 0$ such that $\sum_i w_i = 1$.
\end{theorem}

The proof of this theorem relies on the following lemma, which itself is proved in \cref{sec:supp pf of univ calib lemma} of the Supplementary Material. We use $\Delta^{d-1}$ to denote the unit simplex in $\R^d$. 

\begin{lemma}\label{l:eq_univ_calib}
Suppose $F$ and $h$ satisfy the conditions in \cref{t:unique_univ_calibrated}. 
The $(F,h)$-combination test is universally calibrated if and only if for every probability measure $\sigma$ on $\Delta^{d-1}$ and $\Theta \sim \sigma$,  it holds that 
 \begin{equation}\label{e: marginal_exp}
\E_{\sigma}[\Theta_i]=1/d, \quad i=1,\dots,d \quad\implies\quad d \cdot \E_{\sigma}[ h(\Theta)]  = 1.
\end{equation}
\end{lemma}

\begin{proof}[of \cref{t:unique_univ_calibrated}] 
The `if' part is proved by \cref{c:PCT}. 
We now prove the `only if' part.
By \cref{l:eq_univ_calib}, it boils down to showing that \cref{e: marginal_exp} implies the continuous, 1-positively-homogeneous function $h(x)$ must be of the form $\sum_{i=1}^{d} w_i x_i$ for some weights $w \in \Delta^{d-1}$.
To this end, we apply \cref{thm:characterization} with $S := \Delta^{d-1}$ and $\mathcal{G}:=\{g_1,\ldots,g_d\}$, where each $g_i$ is the coordinate function $g_i(x) := x_i$.

In the context of Theorem \ref{thm:characterization}, the probability measures that satisfy the calibration constraints in 
    \eqref{e: marginal_exp} are precisely given by 
    $$
    \mathcal{M}_{1/d}(\mathcal{G}):= \{ \varphi \in {\cal M}(\Delta):\, (g,\varphi) = 1/d,\ \forall g\in {\cal G}\}.
    $$
    Indeed, since $(g_i,\varphi)=1/d$ and $\sum_i g_i(x) = \sum_i x_i = 1$ for every $x \in \Delta^{d-1}$, we have $1=\sum_{i=1}^d (g_i,\varphi) = (1,\varphi) = \varphi(\Delta^{d-1})$, which implies that every $\varphi\in {\cal M}_{1/d}$ is a probability measure.
Let us check the conditions for applying the theorem. For $i=1,\dots,d$, take $x_i:=e_i$, the i-th unit vector in $\R^d$. Then, we have $G = I_d$ and the cone $G (\R_+^d) = \R_+^{d}$, whose interior contains $(1,\dots,1)^{\T}$. Furthermore, $\mathcal{G} = \{e_1, \dots, e_d\}$ satisfies the anti-dominance condition. 

Hence, for any $h$ that satisfies \eqref{e: marginal_exp}, namely $(h, \varphi) = 1/d$ for every $\varphi \in \mathcal{M}_{1/d}(\mathcal{G})$, it holds that $h(x) = \sum_{i=1}^{d} w_i x_i$ for some $w \in \mathbb{R}_+^{d}$ with $\sum_i w_i = 1$. 
\end{proof}

In light of this theorem and the conservativeness of the Cauchy combination test shown in \cref{c:CCT}, a simple fix is to use only the positive side of Cauchy, i.e., let $F$ be the distribution function of the absolute value of a Cauchy variable. We call this modified combination test Cauchy+. The Cauchy+ combination test is universally calibrated and should behave similarly to the Pareto combination test. Indeed, this is also recently suggested by \citet{liu2025heavily}.

\section{Tippett's method, Dunn--\v{S}id{\'a}k correction and Fr\'echet combination test} \label{sec:frechet}
As an illustration of what universal calibration rules out, we re-examine the widely used minimum $p$-value. 
Consider rejecting the global null when the minimum $p$-value $P_{\min} := \wedge_{i=1}^{d} P_i$ falls below the critical value $t_{\alpha} = 1 - \exp\{d^{-1} \log(1-\alpha)\}$, which is set according to
\[ 1 - (1-t_{\alpha})^{d} = \alpha. \]
We use symbols `$\wedge$' and `$\vee$' to denote the minimum and the maximum respectively. 
By construction, this method is exact if $P_1,\dots,P_d$ are independent and uniformly distributed under the null \citep{tippett1931methods,dunn1958estimation,vsidak1967rectangular}. 
In fact, this test is also a heavy-tailed combination test. 
To see this, consider the standard Fr\'echet distribution with shape 1, namely 
\[ F(x) = \exp(-1/x), \quad x>0, \]
which has a Pareto tail $\bar{F}(x) \sim 1/x$ as $x \rightarrow +\infty$. 
The heavy-tailed statistics are combined through the maximum divided by $d$:
\[ h_T(X):=\frac{1}{d} \bigvee_{i=1}^d X_i = - \frac{1}{d \log (1-P_{\min})}, \]
which is a continuous, 1-positively-homogeneous function of $X$. The combined statistic leads to a rejection if 
\[ h_T(X) > F^{-1}(1-\alpha) = - 1 / \log (1-\alpha) \iff P_{\min} < t_{\alpha}. \]

We first present a general result on the Fr\'echet combination test; see \cref{sec: fct calib} in the Supplementary Material for its proof. 

\begin{theorem}[Fr\'echet max-linear combination test] \label{thm:frechet}
Let $X=(X_i)_{i=1}^{d}$ be a random vector that is marginally distributed as the standard Fr\'echet distribution with shape 1, namely $F(x) = \exp(-1/x)$ for $x > 0$. 
Given any $w_1,\dots,w_d > 0$, consider $h_{\vee,w}: \R^{d} \rightarrow \R$ defined as 
\[ h_{\vee,w}(x):=\frac{\bigvee_{i=1}^{d} w_i x_i}{\sum_{i=1}^{d} w_i}. \]
We have the following results. 
\begin{enumerate}
\item If $X_1, \dots, X_d$ are independent, we have $h_{\vee,w}(X)=_{d} X_1$. 
\item If $X$ is multivariate regularly varying, the $(F, h_{\vee,w})$-combination test is universally honest, i.e.,
\[ \lim_{t \to +\infty} \frac{\Pb(h_{\vee,w}(X) > t)}{\Pb(X_1 >t)} = \lim_{t \to +\infty} \frac{\Pb(h_{\vee,w}(X) > t)}{1/t} \leq 1,\]
where the equality holds if and only if $X_1, \dots, X_d$ are asymptotically independent. 
\end{enumerate}
\end{theorem}

The theorem above implies the following property. 
\begin{corollary} \label{cor:tippet}
Tippett's method / Dunn--\v{S}id{\'a}k correction is universally asymptotically honest. 
Further, it is asymptotically conservative except when the copula between every pair of $p$-values is lower-tail independent.
\end{corollary}
\begin{proof}
With $h_{T} = h_{\vee,w}$ for $w = (1/d,\dots,1/d)$, the second part of \cref{thm:frechet} shows $h_T(X)$ is universally asymptotically honest. Further, it is asymptotically conservative unless $X_1,\dots, X_d$ are asymptotically independent, or equivalently, every pair of $p$-values are independent in the lower tail.
\end{proof}

This result complements the existing results on the $h_T$ test under dependence: it has been shown to be honest (at every level $\alpha < 1/2$) under any multivariate normal copula \citep{vsidak1967rectangular} and $\mathrm{MTP}_2$ \citep{sarkar1998some}.

\subsection{Application to multiple data splitting}
In order to test a global null hypothesis when the alternative hypothesis is very large or unspecified, it is of interest to construct an \emph{omnibus test} that has power against a wide range of alternatives. 
Therefore, it is tempting to construct a test in a \emph{hunt-and-test} fashion: one first learns the specific alternative from which the data appears to have arisen, and then chooses the test statistic accordingly to target that alternative.
Yet, calibrating such a data-adaptive test is often challenging due to the unwieldy dependency between estimating the alternative and assessing its significance. 
To remedy this problem, data splitting has been widely applied: the iid dataset is randomly split into two parts, where one part is first used to choose the test statistic and the other is used to compute the test. 
Such a test can be readily calibrated by ignoring the data-adaptive nature of the test statistic. 

Despite the usefulness of such a strategy, as pointed out by \citet{guo2025rank}, data splitting can cause power deficiency and undesired sensitivity to the way that the data is split. Hence, it is worth considering applying the data-splitting test multiple times and combining the $p$-values properly. In what follows, we consider applying the Fr\'echet max-linear combination test to this setting. 

Suppose the data-splitting test also depends on a tuning parameter, e.g., the ratio to split data, and for practical purposes it can be chosen from $J$ fixed options. We randomly split the dataset and compute the test statistic $IJ$ times; when the tuning parameter does not affect splitting, it suffices to only split the dataset $I$ times and each time compute the test statistic under every option. 
For $i=1,\dots,I$ and $j=1,\dots,J$, let $P_{ij}$ denote the $p$-value from the $i$-th split and the $j$-th option. As a straightforward way to combine the $p$-values, one can consider
\[ P_{\min} :=  \min_{i} \min_{j} P_{ij} = \min_{i,j} P_{ij},\]
which takes the minimum among the options for each split, followed by further taking the minimum across the splits. 
For a more general way to combine the $p$-values, let $X_{ij} := -1 / \log(1-P_{ij})$ be the transformed Fr\'echet random variables. Let $w_1,\dots,w_J>0$ with $\sum_{j} w_j = 1$ be some fixed weights assigned to the options of the tuning parameter, e.g., weighting the 1/2 split ratio the most. 
For each split $i$, we first combine $X_{i1},\dots, X_{iJ}$ max-linearly with weights $w$; then we combine the splits by taking their maximum. There is no reason to further weight the splits because they are exchangeable. We have
\[ Y_{i} := \bigvee_{j=1}^{J} w_j X_{ij}, \quad Z := \frac{1}{I}\bigvee_{i=1}^{I} Y_i, \]
which is equivalent to $P_{\min}$ upon choosing $w_1 = \dots = w_J = 1/J$. 
Because $Z$ can be rewritten as 
\[ Z = \bigvee_{i,j} (w_j / I) X_{ij}  = \left. \bigvee_{i,j} (w_j / I) X_{ij} \middle/ \sum_{i,j} (w_j / I) \right. ,\]
we can apply \cref{thm:frechet} and obtain the combined $p$-value
\[ P_{\vee,w} := 1 - \exp(-1/Z). \]
This $p$-value is asymptotically conservative when the level $\alpha$ approaches zero, if $X$ as a random vector is multivariate regularly varying.

\section{Simulation studies} \label{sec:num}
We use numerical simulations to study the calibration and power of four combination tests: Pareto, Cauchy, Cauchy+ and Fr\'echet. As discussed in \cref{sec:universal}, Cauchy+ is a simple improvement of Cauchy by taking $F$ to be the distribution of the absolute value of a Cauchy random variable.
R code for reproducing the simulations can be found at {\small \texttt{https://github.com/parijatch/Universal_Calibration_of_PCTs}}.
\subsection{Calibration}\label{sec:calib}
We numerically examine the calibration of combination tests. As shown respectively in \cref{c:PCT,c:CCT,thm:frechet}, Pareto is asymptotically calibrated, while Cauchy and Fr\'echet are asymptotically honest and typically conservative. 
Further, we expect Fr\'echet's type-I error to approach the nominal level when the $p$-values are less dependent near zero. Finally, we expect Cauchy+ to behave similarly to Pareto. 

We generate $p$-values from a multivariate $t$-copula, which is multivariate regularly varying. 
Consider a random vector $(T_1, \dots, T_{d})^{\T} \sim t_{\nu}(0, \Sigma)$ with two types of shape matrix 
\begin{equation} \label{eqs:t-sigma}
\Sigma_{\text{autoreg}} := ( \rho^{|i-j|})_{d\times d}, \quad \Sigma_{\text{exch}} := (\rho^{\mathbb{I}_{i\neq j}})_{d\times d},
\end{equation}
which are then converted to two-sided $p$-values $P_i := 2\{1 - F_{t_\nu}(|T_i|)\}$ for testing the location. 
For all $\nu>0$, $T_1, \dots, T_d$ are in fact tail-dependent even when $\Sigma$ is a diagonal matrix; see \eqref{e:ex:mult-t-lambda}.
The degree of tail-dependence vanishes as $\nu\to \infty$, provided that $\Sigma$ is non-degenerate, which aligns with the asymptotic independence of any non-degenerate multivariate normal distribution.

\begin{figure}[htb]
\centering
\includegraphics[width=0.9\textwidth]{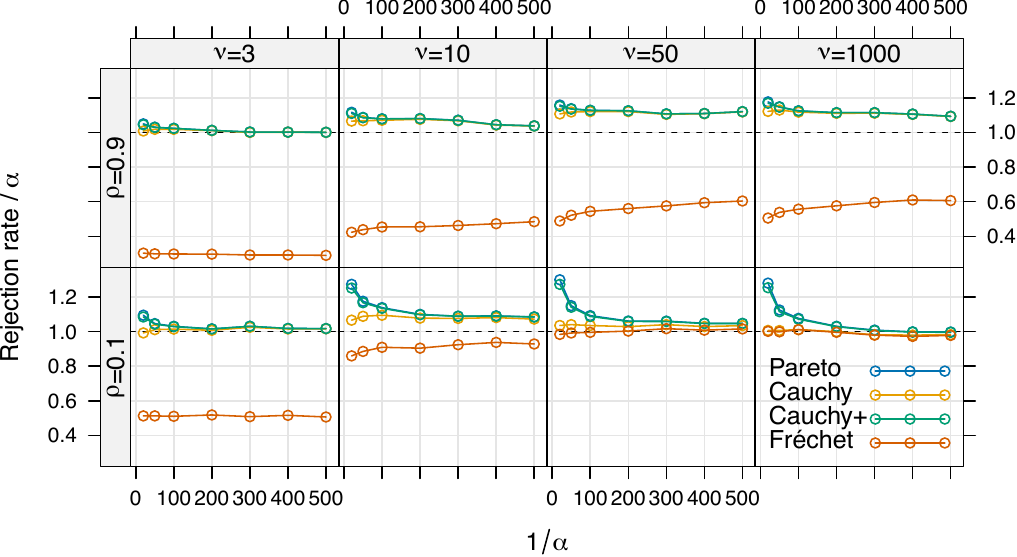}
\caption{Type-I error relative to the nominal level of combination tests under a 10-dimensional multivariate $t$-copula with $\nu$ degrees of freedom and an autoregressive shape matrix in \eqref{eqs:t-sigma}. The curves of Pareto and Cauchy+ almost overlap. The results are computed from $10^6$ replications and the standard errors are negligible.}
\label{fig:autoreg}
\end{figure}

\cref{fig:autoreg} reports the relative type-I error $\hat{\alpha} / \alpha$ as a function of $1/\alpha$ under $d=10$, $\rho \in \{0.1,0.9\}$ and $\nu \in \{3,10,50,1000\}$ for the autoregressive $\Sigma$; a similar result under the exchangeable $\Sigma$ can be found in \cref{sec: supplement simulation} of the Supplementary Material.
The results match what our theory predicts: Pareto and Cauchy+, performing almost identically, maintained the type-I error close to $\alpha$, except when $\nu$ is large and $\alpha$ is not sufficiently small. 
Meanwhile, Fr\'echet can be rather conservative and only approaches the nominal level when $\rho$ is small and $\nu$ is large, upon which the $t$-copula is close to independence. 
See also the pairwise plots of the combined $p$-values in the left panel of \cref{fig:pairs}. 

\begin{remark}
The phenomenon that the Pareto combination test has $\hat{\alpha} / \alpha > 1$ for larger $\nu$ is related to a finding in \cite{chen2025unexpected}. 
From their result it follows that for $X_1, \dots, X_d$ drawn iid from a Pareto distribution with tail index 1, $X_1$ is stochastically dominated by any convex combination of $X_1, \dots, X_d$. In particular, this implies that 
\begin{equation*}
\frac{\Pb\rbrac{\sum_{i} w_i X_i>1/\alpha}}{\Pb\rbrac{X_1>1/\alpha}}>1, \quad 0<\alpha<1. 
\end{equation*}
\end{remark}

\begin{figure}[htb]
\centering
\includegraphics[width=0.45\textwidth]{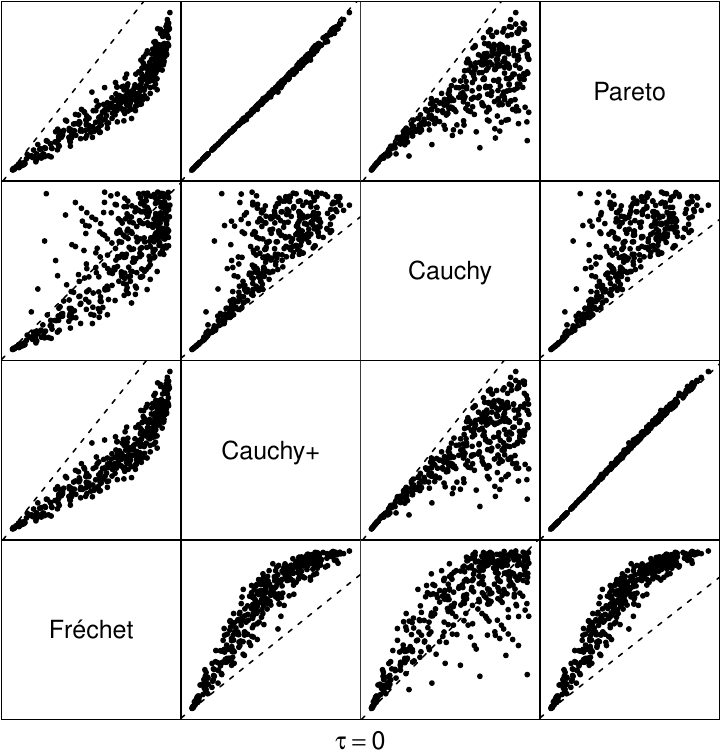}
\hspace{2em}
\includegraphics[width=0.45\textwidth]{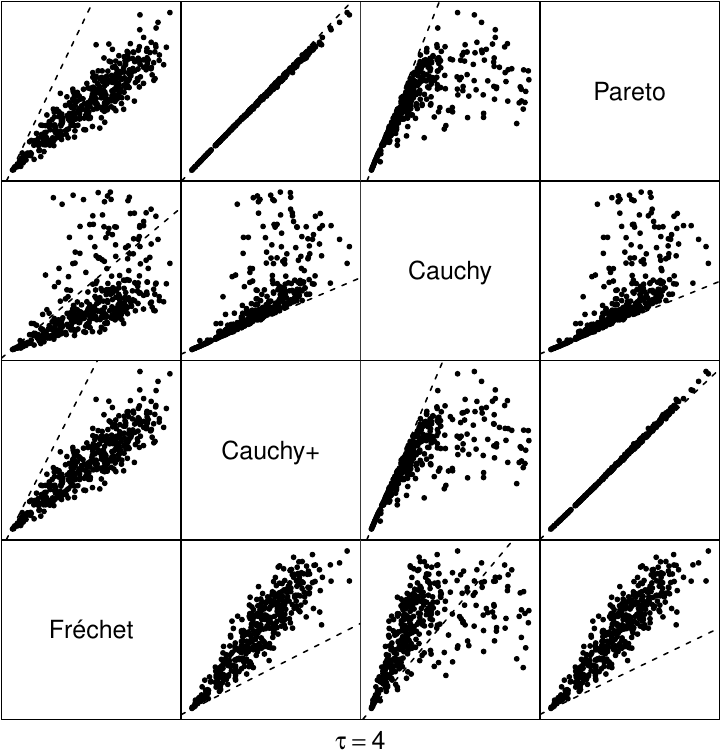}
\caption{Pairwise plots of the combined $p$-values in the multivariate $t$ simulation setting with $\nu=3$, $d=10$ and $\rho=0.1$. Left: $\tau=0$; Right: $\tau = 4$.}
\label{fig:pairs}
\end{figure}

\subsection{Power}
We use simulation to study and compare the power of combination tests. 
In the same setting as \cref{sec:calib}, we consider testing $H_0: \mu = 0$ against $H_1: \mu \neq 0$ from a random vector $(T_1, \dots, T_{d})^{\T} \sim t_{\nu}(\mu, \Sigma)$. 
We choose $\Sigma = \Sigma_{\text{autoreg}}$ in \cref{eqs:t-sigma} with $\rho$ = 0.1; see also \cref{sec: supplement simulation} of the Supplementary Material for results under an exchangeable $\Sigma$.  
We consider alternatives $\mu = \tau \eta$, where $\eta$ is the normalized eigenvector of $\Sigma$ corresponding to the smallest eigenvalue and $\tau > 0$ is a scalar to control the effect size. 
This requires a two-sided test because $\mu$ has both positive and negative coordinates. Therefore, the $p$-values are computed as $P_i := 2\{1 - F_{t_{\nu}}(|T_i|)\}$ for $i=1,\dots,d$. As a reference, we measure the power of combination tests relative to an oracle likelihood ratio test, which is based on the likelihood ratio between $H_0$ and the simple alternative $\mu = \tau \eta$. The likelihood ratio test is calibrated exactly using its distribution under $H_0$. 
By construction and the Neyman--Pearson lemma, the power of this likelihood ratio test is an upper bound on the power of any feasible test. 

\cref{fig:pow-autoreg} reports the results for $\nu \in \{3,10,50,1000\}$, $d\in \{3,10,20\}$ and $\alpha=0.05$. In all settings, Pareto and Cauchy+ have the highest and nearly identical power. 
Cauchy is slightly less powerful and Fr\'echet is evidently the least powerful. 
These findings are further illustrated by the pairwise plots in the right panel of \cref{fig:pairs}. 
As $\tau \to +\infty$, the relative power of every combination test approaches 1. 

\begin{figure}[htb]
\centering
\includegraphics[width=0.9\textwidth]{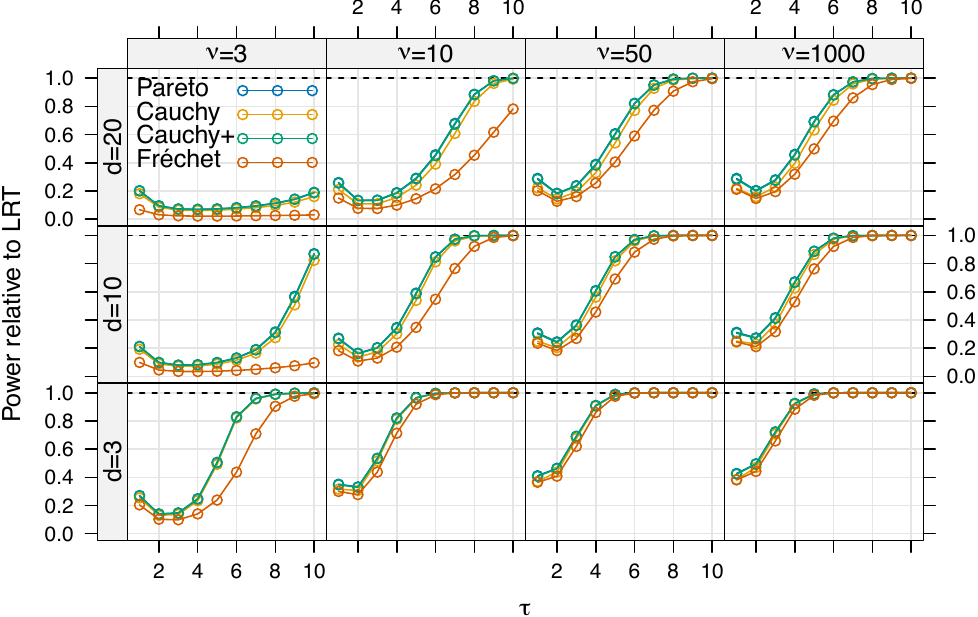}
\caption{Power of combination tests under $\alpha=0.05$ for testing $\mu=0$ relative to the oracle likelihood ratio test. Each combination test is computed from $d$ two-sided $p$-values corresponding to the coordinates of $t_{\nu}(\tau \eta, \Sigma)$, where $\Sigma$ is of autoregressive type with $\rho=0.1$. The curves of Pareto and Cauchy+ almost overlap. The results are computed from $10^6$ replications and the standard errors are negligible.}
\label{fig:pow-autoreg}
\end{figure}

\section{An application to independence testing of multidimensional physiological traits} \label{sec: application}

Projection correlation is a method for assessing the independence between two random vectors $X \in \mathbb{R}^p$ and $Y \in \mathbb{R}^q$, based on paired realizations $\{(x_i, y_i)\}_{i=1}^n$. 
In its original form, \citet{zhu2017} proposed to use random coefficients $a \in \mathbb{R}^p$ and $b \in \mathbb{R}^q$ to obtain one-dimensional projections $(a^{\T} x_i, b^{\T} y_i)$ and then assess the association between $a^{\T} X$ and $b^{\T} Y$ using $\{(a^{\T} x_i, b^{\T} y_i)\}_{i=1}^n$. 
This process can be repeated $d$ times: for $k=1,\dots,d$, let $r_k$ be the association statistic corresponding to coefficients $(a_k, b_k)$, which are drawn independently of the data. One may use $r_{\max} := \max_k r_k$ as the final test statistic, which can be calibrated using permutations.  

Here we consider a modified procedure: for $k=1,\dots,d$, we use $r_k$ to compute the $p$-value $P_k$ and combine $P_1,\dots,P_d$ using the Pareto linear combination test. 
Specifically, we choose $r_k$ as the Kendall's rank correlation coefficient, from which the $p$-value can be derived for both independent samples and samples from complex survey designs \citep{Hunsberger2022}.

We apply this method to the 2015-2016 wave of the National Health and Nutrition Examination Survey data, which captures a wide range of health-related phenotypes of American adults. 
To assess whether vectors of related phenotypes are statistically dependent, we compute $d=100$ random projection $p$-values, where each $(a_k, b_k)$ consists of independent standard normal coordinates. 
Survey weights are used so that the results reflect the target population, and the $p$-values account for the clustered design of the survey sample. 
The final $p$-value is derived from the Pareto combination test with uniform weights. 

To control for potentially strong age and sex differences, we only consider individuals between 30 and 50 years of age, and the tests are conducted separately for females and males.  
We consider 4 multivariate phenotypes comprised of the survey measures: 
4 measures of body size (height, weight, arm circumference, waist circumference) denoted as {\em bmx}, 4 measures of body composition (trunk fat mass, lean mass excluding bone, total fat mass, total bone mass) denoted as {\em dexa}, 4 measures of oral health (number of teeth that are intact, missing, replaced, and with caries) denoted as {\em den}, and 28 components of the ``standard biochemistry profile'' (based on a blood draw) denoted as {\em lab}.  All variables are standardized to have mean zero and unit variance.

Focusing on the extent to which blood biochemistry informs other phenotypes, we assess independence between {\em lab} and each of {\em den}, {\em bmx}, and {\em dexa} separately. 
To gauge the power and sensitivity of the testing procedure, we tested independence at a sequence of sample sizes. 
Letting $n$ be the total observed sample size, we consider samples of size $n_\ell=\lfloor n\cdot f^\ell \rfloor$ for $f=0.8$ and $\ell=0, 1, \ldots$ until $n_\ell<100$. 
As part of our sensitivity analysis, for each $n_\ell$, we sample $n_\ell$ observations uniformly without replacement 1,000 times from the total sample and report the median, $10^{\rm th}$, and $90^{\rm th}$ percentiles of the resulting 1,000 $p$-values.  These 1,000 combined $p$-values vary both due to randomness in the subsampling, and due to randomness in the projections $a_k, b_k$.  Thus, the combined $p$-values vary over replications even when $n_\ell=n$.  

\begin{table}[htb]
\caption{Summary statistics for $p$-values testing the null hypothesis of independence between blocks of variables, based on subsamples of the National Health and Nutrition Examination Survey data.}
\label{nhanes}
\begin{center}
\begin{tabular}{lrrrrrrrrrrr}
& \multicolumn{5}{c}{Female} && \multicolumn{5}{c}{Male}\\\cline{2-6}\cline{8-12}
& $n\;$ & $q_{50}$ & $q_{10}$ & $q_{90}$ & Bonf && $n\;$ & $q_{50}$ & $q_{10}$ & $q_{90}$ & Bonf\\
\hline
den/lab &  620 &  0.08 &  0.04 &  0.13 &  0.35 && 648 &  0.01 &  0.01 &  0.03 &  0.04 \\
den/lab &  496 &  0.13 &  0.06 &  0.21 &  0.69 && 519 &  0.05 &  0.02 &  0.11 &  0.19 \\
den/lab &  397 &  0.14 &  0.07 &  0.23 &  0.78 && 415 &  0.07 &  0.03 &  0.14 &  0.28 \\\hline
bmx/lab &  620 &  0.00 &  0.00 &  0.00 &  0.00 && 648 &  0.00 &  0.00 &  0.00 &  0.00 \\
bmx/lab &  496 &  0.00 &  0.00 &  0.01 &  0.01 && 519 &  0.00 &  0.00 &  0.00 &  0.00 \\
bmx/lab &  397 &  0.01 &  0.00 &  0.02 &  0.02 && 415 &  0.00 &  0.00 &  0.00 &  0.00 \\\hline
dexa/lab &  620 &  0.00 &  0.00 &  0.00 &  0.00 && 648 &  0.00 &  0.00 &  0.00 &  0.00 \\
dexa/lab &  496 &  0.01 &  0.00 &  0.02 &  0.01 && 519 &  0.00 &  0.00 &  0.00 &  0.00 \\
dexa/lab &  397 &  0.01 &  0.00 &  0.02 &  0.02 && 415 &  0.00 &  0.00 &  0.00 &  0.00 \\\hline
\end{tabular}
\end{center}
\end{table}

The results for the top 3 sample sizes are summarized in \cref{nhanes}, 
with the rest provided in \cref{tab: supp_nhanes} of the Supplementary Material.
For the largest sample sizes, the null hypothesis of independence is rejected (combined $p$-value $\le 0.05$) in 5 of the 6 settings of sex $\times$ phenotype. 
The sole exception is females with oral health variables ({\em den}), where the median $p$-value is 0.08 and exceeds 0.13 in 10\% of replications. 
As sample size decreases, evidence against independence weakens: in all 6 settings, the null fails to be rejected at least 10\% of the time for sufficiently small samples (e.g., for {\em den} in males, significance is lost in at least 10\% of replications for all but the full sample size).

\cref{nhanes} also reports {\em Bonf}, a Bonferroni-adjusted combined $p$-value $(d \cdot \wedge_k P_k) \wedge 1$, summarized by its median over 1,000 Monte Carlo replications. Owing to its conservatism under positive dependence, Bonferroni consistently provides weaker evidence of multivariate dependence than the Pareto combination test, with substantially faster loss of detection power as sample size decreases. 
This is evident in the 3rd row of each sex $\times$ phenotype setting: whenever the Pareto combined $p$-value is nonzero, the corresponding Bonferroni $p$-value is at least twice as large; see also supplementary \cref{tab: supp_nhanes} for smaller-sample results, where this effect is particularly pronounced.

Overall, this analysis provides strong evidence that the blood biochemistry panel ({\em lab}) captures multivariate information about diverse physiological traits, including body size ({\em bmx}), body composition ({\em dexa}), and oral health ({\em den}). 
The Pareto combination test is well suited to this setting, as the biochemistry variables are quantitative and often strongly right-skewed. Because different projection coefficients $(a_k, b_k)$ emphasize distinct latent factors within {\em lab}, the resulting $p$-values may exhibit tail dependence, motivating a combination method that accommodates such dependence without incurring the computational cost of permutations.

\section*{Acknowledgments}
We thank Ruodu Wang for an inspiring discussion. 
We also thank Jingshu Wang for encouraging feedback, which motivated us to formulate \cref{c:CCT}.
RG was supported in part by NSF Grant DMS-2515385. SS and PC were partially supported by the 
NSF grant CNS/CSE-2319592 ``Collaborative Research: IMR: MM-1A: Scalable Statistical Methodology for Performance Monitoring, Anomaly 
Identification, and Mapping Network Accessibility from Active Measurements''.

\newpage
\appendix
\appendixnumbering
\crefalias{section}{appendix}
\crefalias{subsection}{appendix}
\crefalias{subsubsection}{appendix}

The Appendices are organized as follows:
\cref{sec:supp-MRV} gives a brief introduction to multivariate regular variation, with extra examples presented in \cref{sec: examples supp};
the proofs of \cref{c:CCT,l:eq_univ_calib,thm:characterization,thm:frechet} are presented in \cref{sec:supp-proofs}; 
additional results on simulation and data analysis are presented in \cref{sec: supplement simulation} and \cref{sec:supp-nhanes} respectively.

\section{A brief introduction to multivariate regular variation} \label{sec:supp-MRV}

  This section reviews the fundamental concepts of multivariate regular variation needed for the paper. For comprehensive treatments, see \citet{resnick:1987,resnick:2007,kulik:soulier:2020,mikosch:wintenberger:2024,resnick:2024} and the references therein.
  
  \subsection{The space $\M_0$}

   In this section, we follow closely the seminal paper of Hult and Lindskog
   \cite{hult:lindskog:2006}. Although our
   focus is on finite-dimensional Euclidean spaces, we adopt the modern language and
   the $\M_0$-convergence perspective. Thus, {\em mutatis mutandis}, all results
   in this section extend to random elements in complete separable metric spaces equipped with a
   continuous scaling action \citep[][]{hult:lindskog:2006}. Extensive expositions can be found
   in the books \cite{resnick:2007,kulik:soulier:2020}.

   Consider the Euclidean space $\R^d$. {\em Excise} its origin
   $\R_0^d:=\R^d\setminus\{0\}$ and equip it with the induced topology. Let
   ${\cal B}_0:={\cal B}(\R_0^d)$ be the Borel $\sigma$-field generated by all open sets in $\R_0^d$.

   Let $B_r(x):= \{ y\in \R^d\, :\, \|x-y\|<r\}$ denote the open ball in $\R^d$ with center $x$ and radius $r>0$. For a set $A\subset \R^d$, we write $\overline{A}$ and $A^\circ$ for the closure and interior, and let $\partial A:= \overline{A}\setminus A^\circ$ be the boundary of $A$, respectively.
   We shall say that a set $A\subset \R_0^d$ is {\em bounded away from the origin}
   (BAFO), if for some $\epsilon>0$, we have $B_\epsilon(0)\cap A = \emptyset$. That is, the BAFO
   sets are a positive distance away from $0$.

   \begin{definition}[The $\M_0$ space and $\M_0$-convergence]
   {\em (i)} A measure $\mu$ on $(\R_0^d,{\cal B}_0)$ is said to be
   {\em boundedly finite} if $\mu(A)<\infty$, for all BAFO Borel sets.  Let $\M_0:=\M_0(\R^d)$
   denote the collection of all such measures.

   {\em (ii)} For  $\mu,\mu_n \in \M_0,\ n\in\N$, we write $\mu_n\to^{\M_0}\mu$ and say $\mu_n$
   converges to $\mu$, in the $\M_0$-topology, if for all BAFO Borel sets $A$ with
   $\mu(\partial A) = 0$,
   $$
     \mu_n(A) \longrightarrow \mu(A),\ \ \mbox{ as }n\to\infty,
   $$
   where $\partial A:= \overline{A}\setminus A^\circ$ denotes the boundary of the set $A$.
   \end{definition}

     Conceptually, it is useful to view the $\M_0$-convergence as a type of weak convergence. Let
    ${\cal C}_0$ denote the class of all {\em bounded} and {\em continuous} functions $f:\R^d\to\R$
    which vanish in a neighborhood of $0$.  That is, such that
    $f(x) =0$, for all $x\in B_\epsilon(0)$ for some $\epsilon>0$, which means that
    $\{ |f|>0\}$ is a BAFO set.

    \begin{proposition}[Theorem 2.1 in  \cite{hult:lindskog:2006}]  \label{th:conv_criteria}We have that
    $\mu_n\to^{\M_0}\mu$ if and only if $\int_{\R^d} fd\mu_n \to \int_{\R^d} fd\mu$, as $n\to\infty$, for all $f\in {\cal C}_0$.
   \end{proposition}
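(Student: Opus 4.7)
The strategy is to reduce this to the classical Portmanteau theorem for finite measures by restricting everything to a set that is bounded away from the origin (BAFO), where both $\mu$ and (eventually) $\mu_n$ are finite. Throughout, the essential observation is that every $f\in\mathcal{C}_0$ has $\{f\neq 0\}\subset B_\epsilon(0)^c$ for some $\epsilon>0$, and every BAFO set $A$ lies in $B_\epsilon(0)^c$ for some $\epsilon>0$. So all the analysis happens away from the origin, where the boundedly finite measures behave like finite ones.

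\emph{Forward direction ($\Rightarrow$).} Assume $\mu_n\to^{\M_0}\mu$ and fix $f\in\mathcal{C}_0$ with $\{f\neq 0\}\subset B_\epsilon(0)^c$. Since $\mu$ is boundedly finite, $\mu(\partial B_r(0))=0$ for all but countably many $r>0$, so I can choose $r\in(0,\epsilon)$ with $\mu(\partial B_r(0))=0$. Let $K:=B_r(0)^c$ and define finite measures $\nu_n:=\mu_n|_K$ and $\nu:=\mu|_K$. Taking $A=K$ in the $\M_0$-convergence hypothesis gives $\nu_n(K)\to\nu(K)<\infty$. For any $\nu$-continuity Borel set $A\subset K$, the set $A$ is BAFO and (checking) $\mu(\partial A)=0$, so $\nu_n(A)=\mu_n(A)\to\mu(A)=\nu(A)$. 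This is precisely convergence of finite measures on a Polish space on continuity sets, which by the standard Portmanteau theorem is equivalent to weak convergence. Since $f$ is bounded and continuous on $K$ and vanishes on $\partial K$, this yields $\int f\,d\mu_n=\int f\,d\nu_n\to\int f\,d\nu=\int f\,d\mu$.

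\emph{Reverse direction ($\Leftarrow$).} Assume $\int f\,d\mu_n\to\int f\,d\mu$ for all $f\in\mathcal{C}_0$ and let $A$ be a BAFO Borel set with $\mu(\partial A)=0$, so $A\subset B_r(0)^c$ for some $r>0$. For $\delta\in(0,r)$ define the upper envelope
\[
 f_\delta(x):=\max\Bigl(0,\,1-\delta^{-1}\,d(x,A)\Bigr)
\]
and the lower envelope
\[
 g_\delta(x):=\min\Bigl(1,\,\delta^{-1}\,d(x,A^c)\Bigr).
\]
Both are bounded continuous. The function $f_\delta$ is supported in the $\delta$-enlargement $A^\delta$, which for $\delta<r$ is BAFO, so $f_\delta\in\mathcal{C}_0$. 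The function $g_\delta$ vanishes on $A^c\supset B_r(0)$, so $g_\delta\in\mathcal{C}_0$ as well. Since $g_\delta\le \mathbf{1}_A\le f_\delta$,
\[
 \int g_\delta\,d\mu_n \;\le\; \mu_n(A) \;\le\; \int f_\delta\,d\mu_n.
\]
By hypothesis, $\int g_\delta\,d\mu_n\to\int g_\delta\,d\mu$ and $\int f_\delta\,d\mu_n\to\int f_\delta\,d\mu$ as $n\to\infty$, so
\[
 \int g_\delta\,d\mu \;\le\; \liminf_n\mu_n(A) \;\le\; \limsup_n\mu_n(A) \;\le\; \int f_\delta\,d\mu.
\]
As $\delta\downarrow 0$, monotone convergence gives $\int f_\delta\,d\mu\downarrow\mu(\overline{A})$ and $\int g_\delta\,d\mu\uparrow\mu(A^\circ)$. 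The continuity-set hypothesis $\mu(\partial A)=0$ forces $\mu(\overline A)=\mu(A^\circ)=\mu(A)$, and sandwiching yields $\mu_n(A)\to\mu(A)$.

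\emph{Main obstacle.} The only real subtlety is that $\mu$ may have infinite mass accumulating at the origin, so none of the standard Portmanteau arguments apply globally. This is resolved everywhere by the BAFO hypothesis: every test function and every set of interest stays in $B_r(0)^c$ for some $r>0$, and we just need to pick $r$ so that $\mu(\partial B_r(0))=0$ when we truncate (possible by bounded finiteness), and verify that the approximating envelopes $f_\delta, g_\delta$ remain in $\mathcal{C}_0$ (which requires $\delta<r$). Once this bookkeeping is done, the proof is a clean reduction to the classical finite-measure Portmanteau theorem.
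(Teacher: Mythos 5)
Your proof is correct. Note that the paper itself offers no proof of this proposition --- it is quoted verbatim as Theorem~2.1 of \cite{hult:lindskog:2006} --- so there is no in-paper argument to compare against; what you have written is a valid self-contained derivation. Your route (restrict to $K=B_r(0)^c$ with $\mu(\partial B_r(0))=0$ and invoke the classical finite-measure Portmanteau theorem for $\Rightarrow$; sandwich $\mathbf 1_A$ between the Lipschitz envelopes $g_\delta\le \mathbf 1_A\le f_\delta$ for $\Leftarrow$) is essentially the standard one, and every step goes through. Two points deserve to be spelled out rather than waved at. First, in the forward direction your ``(checking)'' step: a $\nu$-continuity set $A\subset K$ is a continuity set for the \emph{relative} topology on $K$, and $\partial A$ (in $\R^d_0$) can exceed $\partial_K A$ only by points of $\partial B_r(0)$; since you chose $r$ with $\mu(\partial B_r(0))=0$, the conclusion $\mu(\partial A)=0$ does follow, but this is exactly where the careful choice of $r$ is used and it should be said. (Also, total-mass convergence $\nu_n(K)\to\nu(K)$ is needed for Portmanteau with non-probability measures; you did establish it by taking $A=K$.) Second, in the reverse direction the monotone convergence for the \emph{decreasing} family $f_\delta\downarrow \mathbf 1_{\overline A}$ requires $\int f_{\delta_0}\,d\mu<\infty$ for some $\delta_0$; this holds precisely because $f_{\delta_0}$ is bounded and supported in the BAFO set $A^{\delta_0}$, where $\mu$ is finite --- worth one explicit sentence, since it is the only place where bounded finiteness (as opposed to finiteness) of $\mu$ enters that direction.
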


   The notion of $\M_0$-convergence of sequences of measures can be used to define closed sets in
   $\M_0$ and hence a topology on $\M_0$.  It can be shown that this topology is in fact metrizable.
   Recall first, that for two {\em finite} Borel measures $\mu$ and $\nu$ on $\R^d$,
   the L\'evy-Prokhorov metric, is:
   $$
   \pi(\mu,\nu) := \inf\Big\{ \epsilon>0\, :\,
   \sup_{A \in {\cal B}_0} (\mu(A) - \nu(A_\epsilon))\vee(\nu(A) - \mu(A_\epsilon)) \le \epsilon\Big\},
   $$
   where $A_\epsilon:= \cup_{x\in A} B_\epsilon(x)$ is the $\epsilon$-neighborhood of $A$ and
   $x\vee y:=\max\{x,y\}$.

    Following \cite{hult:lindskog:2006}, for every $r>0$ and a boundedly finite measure
    $\mu\in\M_0$, define $\mu^{(r)}$ as the restriction of $\mu$ to $B_r(0)^c:=\R^d\setminus B_r(0)$.
    Namely, $\mu^{(r)}$ is the {\em finite measure}
    $$
    \mu^{(r)} (A) := \mu(A\setminus B_r(0)),\ \ A\in {\cal B}_0.
    $$
    Now, for every two boundedly finite measures $\mu,\nu\in \M_0$, define
    \begin{equation}\label{e:d-M0}
     d_{\M_0}(\mu,\nu):= \int_0^\infty e^{-r}
     \frac{\pi(\mu^{(r)},\nu^{(r)})}{ 1+ \pi(\mu^{(r)},\nu^{(r)})} dr.
    \end{equation}

   \begin{proposition}[cf.\ Theorems 2.3 and 2.4 in \cite{hult:lindskog:2006}] \label{p:M0-convergence}
   The functional $d_{\M_0}$ in \eqref{e:d-M0} is a metric on $\M_0$ and $(\M_0,d_{\M_0})$ is a complete
   separable metric space.  Moreover, $\mu_n\to^{\M_0}\mu$ if and only if $d_{\M_0}(\mu_n,\mu)\to 0$, as
   $n\to\infty$.
   \end{proposition}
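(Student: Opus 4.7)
The plan is to reduce each assertion to a corresponding property of the L\'evy--Prokhorov metric $\pi$ on the space of \emph{finite} Borel measures on $\R^d$ via the truncations $\mu \mapsto \mu^{(r)}$. The key elementary observations are: (a) for $r_1 < r_2$, $\mu^{(r_2)}$ is the restriction of $\mu^{(r_1)}$ to $B_{r_2}(0)^c$, so the truncations form a consistent family; (b) the function $x \mapsto x/(1+x)$ is non-decreasing and subadditive on $[0,\infty)$, so the $r$-integrand inherits subadditivity from $\pi$; and (c) since $\mu \in \M_0$, each $\mu^{(r)}$ is a genuine finite measure.

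The metric axioms follow quickly. Non-negativity, symmetry, and the triangle inequality descend from (b) combined with the corresponding properties of $\pi$ and linearity of the integral. For separation, $d_{\M_0}(\mu,\nu) = 0$ forces $\pi(\mu^{(r)}, \nu^{(r)}) = 0$ for Lebesgue-a.e. $r$, hence $\mu^{(r)} = \nu^{(r)}$ along a sequence $r_n \downarrow 0$; since $\R_0^d = \bigcup_n B_{r_n}(0)^c$, countable additivity gives $\mu = \nu$.

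For the equivalence of $\M_0$-convergence with $d_{\M_0}$-convergence, I would work through the truncations in both directions. For $(\Rightarrow)$: if $\mu_n \to^{\M_0} \mu$, then for every $r$ with $\mu(\partial B_r(0)) = 0$ (a co-countable set of radii) one obtains weak convergence $\mu_n^{(r)} \to \mu^{(r)}$ of finite measures on $\R^d$ by applying Proposition \ref{th:conv_criteria} to bounded continuous functions that vanish on $B_r(0)$; by the classical Portmanteau theorem, this is equivalent to $\pi(\mu_n^{(r)}, \mu^{(r)}) \to 0$, and dominated convergence in \eqref{e:d-M0} (integrand bounded by $e^{-r}$) yields $d_{\M_0}(\mu_n, \mu) \to 0$. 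For $(\Leftarrow)$: along a subsequence one has $\pi(\mu_n^{(r)}, \mu^{(r)}) \to 0$ for a.e. $r$; every BAFO $\mu$-continuity set $A$ lies in some $B_r(0)^c$ with $r$ a continuity radius, so $\mu_n(A) \to \mu(A)$ along the subsequence, and the standard subsequence trick (every subsequence has a further subsequence converging to the same limit) promotes this to the full sequence.

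Completeness and separability follow the same pattern. Given a $d_{\M_0}$-Cauchy sequence $(\mu_n)$, the sequence $(\mu_n^{(r)})$ is $\pi$-Cauchy for a.e. $r$, and hence has a weak limit $\nu_r$ by completeness of the finite-measure space under $\pi$ (a classical Polish-space result, cf.\ Billingsley); the consistency relation $\nu_{r_2} = \nu_{r_1}|_{B_{r_2}(0)^c}$ for $r_1 < r_2$ among the good radii allows one to define a boundedly finite limit $\mu \in \M_0$ by extension. Separability descends from separability of finite-measure spaces on each annular shell $\{r_1 < \|x\| \le r_2\}$ with rational radii, glued across shells. The main obstacle I anticipate is the completeness gluing: carefully handling the Lebesgue-null set of "bad" radii, verifying consistency of the $\nu_r$ on these exceptional radii by a limiting argument, and confirming that the resulting limit is genuinely boundedly finite (not merely $\sigma$-finite) — for this one needs the uniform bound $\sup_n \mu_n^{(r)}(\R_0^d) < \infty$ at each fixed good $r$, which follows from the $\pi$-Cauchy property and the fact that $\pi$-convergence preserves total mass.
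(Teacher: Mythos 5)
The paper does not prove this proposition: it is quoted directly from Theorems 2.3 and 2.4 of \cite{hult:lindskog:2006}, and your reduction to the L\'evy--Prokhorov metric on the truncations $\mu^{(r)}$ is precisely the strategy of that reference, with the main technical points (continuity radii, dominated convergence in $r$, consistency and gluing of the truncated limits, uniform mass bounds) correctly identified. The one step you state a bit too quickly is that a $d_{\M_0}$-Cauchy sequence has $\pi$-Cauchy truncations for a.e.\ $r$: this does not follow immediately from Cauchy-ness of the integrated quantity, since the exceptional null set of radii a priori depends on the pair $(n,m)$; one first extracts a subsequence with $d_{\M_0}(\mu_{n_k},\mu_{n_{k+1}})\le 2^{-k}$ and uses a Borel--Cantelli/monotone-convergence argument to get a.e.-$r$ convergence along it, then returns to the full sequence via the triangle inequality --- but you flag exactly this (together with measurability of $r\mapsto\pi(\mu^{(r)},\nu^{(r)})$, which also deserves a remark) as the anticipated obstacle, so the outline is sound.
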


   For a Portmanteau theorem with equivalent characterizations of the $\M_0$-convergence,
   see Theorem 2.4 in \cite{hult:lindskog:2006}. We conclude this brief review with a
   characterization of the important notion of {\em relative compactness}, which is also
   reproduced from \cite{hult:lindskog:2006}. Recall that a set of measures $M\subset \M_0$ is
   said to be relatively  compact if its closure is compact. Equivalently, an infinite subset $M$
   of a metric space $\M_0$ is relatively compact if and only if every infinite sequence
   $\{\mu_n\}\subset M$ has a converging infinite subsequence $\{\mu_{n_k}\}$, whose limit is in $\M_0$ though not necessarily
   in $M$.

    \begin{proposition}[Theorem 2.7 in \cite{hult:lindskog:2006}] \label{p:M0-tightness} A
    set of measures $M \subset \M_0$ is relatively compact in $(\M_0,d_{\M_0})$ if
    and only if for some $r_n\downarrow 0$, the following two conditions hold:

     \begin{enumerate}
         \item For all $n\in \N$, we have
         \begin{equation}\label{e:tightness_crit_1}
                        \sup_{\mu\in M} \mu\Big( \R^d \setminus B_{r_n}(0)\Big) <\infty
         \end{equation}
         
         \item For every $\epsilon>0$, there exist compact sets $C_n \subset \R^d\setminus B_{r_n}(0)$,
         such that 
         \begin{equation}  \label{e:tightnes_crit_2}
            \sup_{\mu \in M} \mu\Big( \R^d \setminus( C_n \cup B_{r_n}(0) ) \Big) <\epsilon. 
         \end{equation}
     \end{enumerate}
    \end{proposition}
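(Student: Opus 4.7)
The plan is to establish Proposition \ref{p:M0-tightness} by reducing relative compactness in $(\M_0,d_{\M_0})$ to Prokhorov's classical tightness theorem applied to each restriction family $\{\mu^{(r_n)}:\mu\in M\}$, and then gluing the restricted limits together via a diagonal argument. The core idea is that the metric $d_{\M_0}$ in \eqref{e:d-M0} is built out of L\'evy--Prokhorov distances between the finite restrictions $\mu^{(r)}$, so $\M_0$-convergence essentially amounts to weak convergence of $\mu_n^{(r)}$ to $\mu^{(r)}$ at continuity radii $r$; this bridge is what lets us import Prokhorov's theorem.

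For the \emph{necessity} direction, I would first pick any $r_n\downarrow 0$ and argue by contradiction. If \eqref{e:tightness_crit_1} failed for some $n$, we could extract $\mu_k\in M$ with $\mu_k(\R^d\setminus B_{r_n}(0))\to\infty$; relative compactness would yield a subsequence $\mu_{k_j}\to^{\M_0}\mu$ with $\mu\in\M_0$, and choosing any $r<r_n$ that is a continuity radius of $\mu$ (all but countably many $r$ work) gives the finite bound $\mu_{k_j}(\R^d\setminus B_r(0))\to\mu(\R^d\setminus B_r(0))<\infty$, contradicting divergence. For \eqref{e:tightnes_crit_2} I would fix $n$ and $\epsilon>0$, regard $\{\mu^{(r_n)}:\mu\in M\}$ as a family of uniformly bounded finite measures on the Polish space $\R^d\setminus B_{r_n}(0)$, observe that relative compactness in $\M_0$ passes to relative compactness of this restricted family in the topology of weak convergence of finite measures (using Proposition \ref{th:conv_criteria} and the definition of $d_{\M_0}$), and then invoke Prokhorov's theorem to produce the required compact $C_n$.

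For the \emph{sufficiency} direction, let $\{\mu_k\}\subset M$ be any sequence. Conditions \eqref{e:tightness_crit_1} and \eqref{e:tightnes_crit_2} say precisely that, for each fixed $n$, the restricted family $\{\mu_k^{(r_n)}\}_k$ is uniformly bounded and tight as a family of finite Borel measures on the Polish space $\R^d\setminus B_{r_n}(0)$. Prokhorov's theorem then gives a subsequence along which $\mu_k^{(r_n)}$ converges weakly to some finite measure $\nu_n$. A standard diagonal extraction produces one subsequence $\{\mu_{k_j}\}$ such that $\mu_{k_j}^{(r_n)}\Rightarrow\nu_n$ for every $n$ simultaneously. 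The measures $\nu_n$ are automatically consistent (the restriction of $\nu_n$ to $\R^d\setminus B_{r_m}(0)$ equals $\nu_m$ whenever $r_m>r_n$), which allows us to define a single Borel measure $\mu$ on $\R^d_0$ by $\mu(A):=\lim_n\nu_n(A)$ for any BAFO Borel $A$ (the limit stabilizes once $r_n$ is smaller than the distance of $A$ from $0$). Condition \eqref{e:tightness_crit_1} ensures $\mu\in\M_0$, and I would then verify $\mu_{k_j}\to^{\M_0}\mu$ by testing against $f\in\mathcal{C}_0$ as in Proposition \ref{th:conv_criteria}: since any such $f$ vanishes on some $B_r(0)$, the integral $\int f\,d\mu_{k_j}$ equals $\int f\,d\mu_{k_j}^{(r)}$ for $r$ below that neighborhood radius, and weak convergence of the restrictions (interpolating from the prescribed $r_n$ via monotonicity in $r$) delivers the conclusion.

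The main obstacle I anticipate is the gluing step: one must verify that the pointwise limits $\nu_n$ truly piece together into a single boundedly finite measure on $\R^d_0$ rather than, say, accumulating mass near $0$ in a way that breaks countable additivity. Here condition \eqref{e:tightness_crit_1} is doing the heavy lifting, and the argument hinges on choosing the testing radii $r$ outside the (at most countable) set of atoms of $\mu$ on the spheres $\partial B_r(0)$, so that one can freely pass from the prescribed $r_n$ to arbitrary continuity radii when checking the $\M_0$-convergence criterion. A secondary technical point is that the L\'evy--Prokhorov-type metric in \eqref{e:d-M0} is only defined via an integral over all $r$, so one has to upgrade weak convergence along the countable set $\{r_n\}$ to $d_{\M_0}(\mu_{k_j},\mu)\to 0$; dominated convergence applied to the integrand in \eqref{e:d-M0} combined with continuity of $r\mapsto\pi(\mu_{k_j}^{(r)},\mu^{(r)})$ at a dense set of $r$ handles this cleanly.
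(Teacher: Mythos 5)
The paper does not prove this proposition itself — it is quoted from Hult and Lindskog (their Theorem 2.7), with only a remark that necessity follows from Proposition \ref{p:M0-convergence} plus Prokhorov's criterion and sufficiency from Hult--Lindskog's Theorem 2.2 plus Prokhorov again. Your proposal fleshes out exactly that route (Prokhorov applied to the restricted families $\{\mu^{(r)}\}$, with a diagonal extraction and gluing of the restricted limits, and care taken over continuity radii), so it is correct and takes essentially the same approach as the one the paper points to.
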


   The necessity of this characterization of relative compactness essentially follows from Proposition
   \ref{p:M0-convergence} and Prokhorov's characterization of relative compactness for finite
   measures on complete separable metric spaces \cite{billingsley:1999}. The sufficiency is a consequence
   of Theorem 2.2 in \cite{hult:lindskog:2006} and yet again Prokhorov's criterion.

\subsection{Relative compactness of tail-measures}

  In this section, we establish a result of independent interest.  It shows that the tail-measures 
  of a random vector with regularly varying marginals are relatively compact in the $M_0$-topology.
  As a consequence, this allows us to recover the well-known fact that asymptotic bivariate independence
  implies multivariate regular variation dating back to \cite{berman1961convergence} (cf (8.100) in \cite{beirlant2004statistics}).

\begin{proposition}\label{p:rel-compact} Let $X=(X_i)_{i=1}^d$ be a random vector. Assume that the
marginals of $X$ have regularly varying distributions.  Specifically, suppose that
for all $x>0$ and $i\in [d]$, we have
\begin{equation}\label{e:p:rel-compact}
b(t) \P[ \pm X_i > t x] \to c_\pm x^{-1},\ \ \mbox{ as }t\to\infty,  
\end{equation}
where $c_\pm \ge 0$ and $c_++c_- = 1$, for some monotone non-decreasing 
function such that $b(t)\to \infty$.

Define the rescaled tail-measures 
$$
\mu_t(\cdot):= b(t)\P[ X/t \in \cdot],\ \ t>1
$$
on $(\R_0^d, {\cal B}_0)$ and observe that $\mu_t\in \M_0$. Then:\\

{\em (i)} We  have that $b(t) \sim L(t) t,$ as $t\to\infty$ for some slowly varying function 
$L(\cdot)$. \\

{\em (ii)} The set of rescaled tail-measures  $\{\mu_t,\ t>1\}$ is relatively compact in the 
$\M_0$-topology.  In particular,  for every  $t_n\to\infty$, there is a measure 
$\mu \in \M_0(\R^d)$ 
and a further integer sequence $n_k\to\infty$ such that
$$
\mu_{t_{n_k}} \stackrel{\M_0}{\longrightarrow} \mu,\ \ \mbox{ as }n_k\to\infty.
$$
\end{proposition}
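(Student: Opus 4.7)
The plan is to read off the scaling index of $b$ directly from assumption \eqref{e:p:rel-compact}. Since $c_+ + c_- = 1$, I can pick a coordinate $i$ and a sign for which the associated constant is positive; say, without loss of generality, $c_+ > 0$. For any $\lambda > 0$, specializing \eqref{e:p:rel-compact} to $x = \lambda$ gives $b(t)\P[X_i > \lambda t] \to c_+/\lambda$, while reading \eqref{e:p:rel-compact} along the sequence $\lambda t$ with $x = 1$ gives $b(\lambda t)\P[X_i > \lambda t] \to c_+$. Taking the ratio yields $b(\lambda t)/b(t) \to \lambda$ for every $\lambda > 0$, i.e., $b$ is regularly varying of index $1$, so that $L(t) := b(t)/t$ is slowly varying and $b(t) \sim t L(t)$, as claimed.

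\textbf{Part (ii), setup.} I plan to verify the two tightness conditions in Proposition \ref{p:M0-tightness} for an arbitrary sequence $r_n \downarrow 0$. Working with the supremum norm $\|\cdot\|_\infty$ (equivalent to any other norm on $\R^d$), the workhorse estimate is the union bound
$$
b(t)\,\P[\|X\|_\infty > s t] \;\leq\; \sum_{i=1}^d \Big( b(t)\P[X_i > s t] + b(t)\P[-X_i > s t] \Big),
$$
whose right-hand side, by \eqref{e:p:rel-compact}, converges to $\sum_{i=1}^d (c_+ + c_-)/s = d/s$ as $t \to \infty$, for every fixed $s > 0$. The pre-asymptotic regime $t \in (1, T_0]$ is taken care of by the monotonicity of $b$: for every BAFO Borel set $A$, $\mu_t(A) \leq b(t) \leq b(T_0) < \infty$.

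\textbf{Verifying the two tightness conditions and concluding.} Condition \eqref{e:tightness_crit_1} is then immediate: $\limsup_{t\to\infty} \mu_t(\R^d \setminus B_{r_n}(0)) \leq 2d/r_n$, and combining with the pre-asymptotic bound yields $\sup_{t > 1} \mu_t(\R^d \setminus B_{r_n}(0)) < \infty$. For condition \eqref{e:tightnes_crit_2}, given $\epsilon > 0$, I take the compact annulus $C_n := \{x : r_n \leq \|x\|_\infty \leq R_n\}$, so that $\R^d \setminus (C_n \cup B_{r_n}(0)) = \{\|x\|_\infty > R_n\}$. Choosing $R_n$ large makes the asymptotic bound $\limsup_{t} \mu_t\{\|x\|_\infty > R_n\} \leq 2d/R_n$ smaller than $\epsilon/2$, while the pre-asymptotic bound $\mu_t\{\|x\|_\infty > R_n\} \leq b(T_0)\P[\|X\|_\infty > R_n]$ is also $<\epsilon/2$ for $R_n$ large since $\|X\|_\infty$ is almost surely finite. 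Proposition \ref{p:M0-tightness} then yields relative compactness of $\{\mu_t : t > 1\}$, and the last assertion of the statement is exactly the sequential reformulation of relative compactness in a metric space, valid here since $(\M_0, d_{\M_0})$ is metrizable by Proposition \ref{p:M0-convergence}. The main subtlety to watch is handling the pre-asymptotic and asymptotic ranges of $t$ simultaneously, which is cleanly resolved by the monotonicity of $b$.
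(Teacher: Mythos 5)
Your overall strategy matches the paper's: part (i) by the standard ratio argument (which the paper does not even spell out), and part (ii) by checking the two Hult--Lindskog tightness conditions of Proposition \ref{p:M0-tightness} via a coordinate-wise union bound for large $t$ and the monotonicity of $b$ on the pre-asymptotic range. Part (i) and the verification of \eqref{e:tightness_crit_1} are correct (modulo the harmless looseness of your constant $2d$ versus $d$, and the norm-equivalence constants you wave at), since for \eqref{e:tightness_crit_1} one only needs \emph{finiteness} of the supremum, which any fixed cutoff $T_0$ delivers.

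The verification of \eqref{e:tightnes_crit_2}, however, has a genuine quantifier gap. The statement $\limsup_{t\to\infty}\mu_t\{\|x\|_\infty>R_n\}\le d/R_n<\epsilon/2$ only guarantees $\mu_t\{\|x\|_\infty>R_n\}<\epsilon/2$ for $t$ beyond a threshold $T_0=T_0(R_n)$ that depends on $R_n$, because the convergence in \eqref{e:p:rel-compact} is, a priori, pointwise in $x$. Your pre-asymptotic bound on $1<t\le T_0$ is then $b\bigl(T_0(R_n)\bigr)\,\P[\|X\|_\infty>R_n]$, and since $T_0(R_n)$ may grow with $R_n$, there is no reason this product tends to $0$ as $R_n\to\infty$: you cannot first send $t\to\infty$ for fixed $R_n$ and then separately enlarge $R_n$ to kill the pre-asymptotic term. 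The paper closes exactly this gap by invoking the uniform convergence theorem for regularly varying tails (Proposition 2.4 in \cite{resnick:2007}): one fixes a single threshold $M_\epsilon$, \emph{independent of} $R_n$, such that $b(t)\P[|X_i|>xt]\le x^{-1}+\epsilon/(2d)$ for all $t>M_\epsilon$ and all $x\ge 1$ simultaneously, and only then chooses $R_n$ so large that $b(M_\epsilon)\P[|X_i|>R_n]\le\epsilon/d$. With this (or with Potter-type bounds) your argument closes; without some such uniformity statement it does not. A minor further point: if $B_{r_n}(0)$ is a Euclidean ball while $C_n$ is a sup-norm annulus, then $\R^d\setminus(C_n\cup B_{r_n}(0))$ is not exactly $\{\|x\|_\infty>R_n\}$ (it also contains the sliver $\{x:\|x\|_\infty<r_n\le\|x\|_2\}$); this is repaired by taking $C_n=\{x:\|x\|_2\ge r_n,\ \|x\|_\infty\le R_n\}$, which is still compact and contained in $\R^d\setminus B_{r_n}(0)$.
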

\begin{proof} If $t_n \not \to \infty$, then one can choose a convergent monotone subsequence. Without loss of generality assume the subsequence is increasing, i.e., $t_{n_k}\uparrow \tau <\infty$. By the monotonicity of $b$ 
one readily has $\mu_{t_{n_k}}\to^{\M_0} \mu$, as $n_k\to\infty$, for some non-zero $\mu$.  Indeed, in this case $b(t_{n_k})\to b(\tau-)$, 
and we have $\mu = \mu_{\tau-}:= b(\tau-) \P[X/\tau \in \cdot]$. (If $t_{n_k}$ is decreasing, replace $b(\tau-)$ with $b(\tau+)$) The interesting case is when $t_n\to \infty$.

For this case, we use the analogous tightness criteria for boundedly finite measures (Proposition \ref{p:M0-tightness}). Note that, for every $x>0$, by \eqref{e:p:rel-compact}, with $A_i:=\{ u\in \R^d\, :\, |u_i|>1\}$,
we have that
$$
\mu_t(x\cdot A_i) = b(t)\P[ X/t \in x\cdot A_i] = b(t)\P[ |X_i| > xt] \to x^{-1},\ \ \mbox{ as }t\to\infty.
$$
Take any $r_n \downarrow 0$. Then for all $n, \frac{r_n}{d}\bigcap_{i=1}^dA_i^c = \{u \in \R^d : |u_i|\leq r_n/d\; \forall\; i\} \subseteq B_{r_n}(0) \implies \mu_t\Big( \R^d \setminus B_{r_n}(0)\Big) \leq \mu_t \Big( \bigcup_{i=1}^d \frac{r_n}{d} A
_i\Big)\; \forall t.$\\
Using \eqref{e:p:rel-compact}, $\exists M_n\; \ni \forall t>M_n$, $\mu_t\left(\frac{r_n}{d}A_i\right)< \frac{d}{r_n}+1,\; \forall\; i.$ Also, $\forall t\leq M_n,\; \mu_t\left(\frac{r_n}{d}A_i\right)=b(t)\P(|X_i|>\frac{r_nt}{d})\leq b(M_n)$ as b is non-decreasing. Thus, $\forall r_n\downarrow 0 \text{ and } \forall t>1,$
\begin{align*}
    \mu_t\Big( \R^d \setminus B_{r_n}(0)\Big)& \leq  \mu_t \Big( \bigcup_{i=1}^d \frac{r_n}{d} A_i\Big) \leq \sum_{i=1}^d \mu_t\left(\frac{r_n}{d}A_i\right) \leq d\left[\left(\frac{d}{r_n}+1\right)\vee b(M_n)\right]\\
    & \implies \sup_{t>1}\mu_t\Big( \R^d \setminus B_{r_n}(0)\Big) < \infty\quad \forall r_n\downarrow 0
\end{align*}
This proves \eqref{e:tightness_crit_1} in \ref{p:M0-tightness}. For proving \eqref{e:tightnes_crit_2}, begin with fixing any $r_n\downarrow 0 \text{ and } \epsilon>0$. Define $C_{_n,\epsilon}=R_n\bigcap_{i=1}^dA_i^c$ where $R_n=R_{n,\epsilon}$ satisfies the following:
\begin{enumerate}
    \item $R_n>\max\left(1,r_n, \frac{2d}{\epsilon}\right)$
    \item If $M_{\epsilon}$ is such that $\forall t>M_{\epsilon},\; \mu_t\left(xA_i\right)\leq \frac{1}{x}+\frac{\epsilon}{2d}\; \forall i\text{ \emph{and} }\forall x>1,$ then $R_n \text{ be such that } \P(|X_i|>R_n)\leq \frac{\epsilon}{db(M_{\epsilon})}\; \forall\; i .$ Note that here we use Proposition 2.4 in \cite{resnick:2007} which states that \eqref{e:p:rel-compact} holds uniformly over $x\in(b,\infty)\; \forall\; b>0.$ Here we take $b=1$ when we impose $R_n>1.$
\end{enumerate}
Observe that, $\mu_t\left(\R^d\setminus(C_{n,\epsilon}\cup B_{r_n}(0))\right)=\mu_t\left(\bigcup_{i=1}^dR_nA_i\right)\leq \sum_{i=1}^d\mu_t(R_nA_i)$.

Then, if $t>M_{\epsilon},$
\begin{align*}
    & \mu_t(R_nA_i)\leq \frac{1}{R_n}+\frac{\epsilon}{2d} < \frac{\epsilon}{d}\\
    & \text{ (using uniform convergence over }(1,\infty) \text{ and condition 1 on R)}\\
     \implies & \sum_{i=1}^d\mu_t(R_nA_i) \leq \epsilon
\end{align*}
Next, if $1<t\leq M_{\epsilon},$
\begin{align*}
    &\mu_t(R_nA_i)=b(t)\P(|X_i|>tR_n)\leq b(M_{\epsilon})\P(|X_i|>R_n) \leq \epsilon/d\\
    & \text{(using condition 2 on R)}\\
     \implies & \sum_{i=1}^d\mu_t(R_nA_i) \leq \epsilon
\end{align*}
Thus, $\forall t>1, \mu_t\left(\R^d\setminus(C_{n,\epsilon}\cup B_{r_n}(0))\right)\leq \epsilon$, which finally proves \eqref{e:tightnes_crit_2} in \ref{p:M0-tightness}, and hence the relative compactness of $\{\mu_t,\ t>1\}$ in $\M_0.$

\end{proof}

\begin{remark} Proposition \ref{p:rel-compact} is quite useful. As we shall see below, it implies that 
multivariate regular variation holds whenever the tail-dependence coefficients vanish.  This recovers the classical
result due to \cite{berman1961convergence} but it is more widely applicable since it shows
the relative compactness of the tail measure for {\em an arbitrary} random vector with heavy-tailed marginals.
\end{remark}

We start with positive regularly varying random variables and later generalize to all real-valued random variables.
\begin{lemma}\label{le: equalising_thresholds}
    Say $X,Y \text{ are non-negative random variables in }  RV_{-1}(b,c)$ for some regularly varying monotone function $b(t) \to \infty \text{ as } t\to\infty \text{ and }c>0$, i.e., $\forall x>0$
     \begin{align}\label{eq: tails_X&Y}
         \lim_{t\to \infty}b(t)\P(X>tx)= cx^{-1},\ \  \text{and} \ \ 
          \lim_{t \to \infty} b(t)\P(Y>tx)= cx^{-1}
     \end{align}
    If they are also asymptotically independent in the upper tail, i.e.,
    $$\lambda(X,Y):=\lim_{p \to 1^-}\P\rbrac{X>F_X^{-1}(p) \mid Y>F_Y^{-1}(p)}=0$$
    then,
    \begin{align}\label{eq: asym_indep}
        \lim_{t \to \infty} \P(X>t \mid Y>t)=0
    \end{align}
    Here $F_X,F_Y$ represent the distribution functions of X and Y respectively while $F_X^{-1},F_Y^{-1}$ refer to their generalized inverses.
\end{lemma}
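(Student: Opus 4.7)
The plan is to convert the asymptotic independence condition, which is stated in terms of separate marginal quantile thresholds $F_X^{-1}(p)$ and $F_Y^{-1}(p)$, into an analogous bound with the common threshold $t$. The crucial observation is that since $X$ and $Y$ share the same regularly varying tail with identical scale constant $c$ in \eqref{eq: tails_X&Y}, their tail quantile functions
\[
U_X(s) := F_X^{-1}(1 - 1/s), \qquad U_Y(s) := F_Y^{-1}(1 - 1/s)
\]
are asymptotically equivalent, both satisfying $U_X(s) \sim U_Y(s) \sim b^{-1}(c\,s)$ as $s \to \infty$. This follows from the inversion of regular variation, together with the fact that $b$ itself is regularly varying with index $1$, since $\bar F_X \sim c/b$ is regularly varying with index $-1$.

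First, I would recast the hypothesis $\Lambda(X,Y) = 0$ along the sequence $p = p_s := 1 - 1/s$ with $s \to \infty$. Combined with $\P(Y > U_Y(s)) \le 1/s$, this yields
\[
s \cdot \P\bigl(X > U_X(s),\, Y > U_Y(s)\bigr) \longrightarrow 0 \qquad (s \to \infty).
\]
Next, given $t > 0$ large and a fixed $\epsilon \in (0,1)$, I would choose $s(t) := b((1-\epsilon) t)/c$, which tends to infinity as $t \to \infty$. From $U_X(s(t)) \sim U_Y(s(t)) \sim b^{-1}(c\,s(t)) = (1-\epsilon) t$, both $U_X(s(t))$ and $U_Y(s(t))$ fall below $t$ for all sufficiently large $t$, and monotonicity of the joint tail gives
\[
\P(X > t,\, Y > t) \le \P\bigl(X > U_X(s(t)),\, Y > U_Y(s(t))\bigr).
\]
Multiplying the last display by $b(t)$ and using $b(t)/s(t) \to c/(1-\epsilon)$ (a direct consequence of the regular variation of $b$ with index $1$), I would obtain $b(t)\,\P(X > t, Y > t) \to 0$. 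Dividing by $b(t)\,\P(Y > t) \to c$ then delivers
\[
\P(X > t \mid Y > t) = \frac{b(t)\,\P(X > t,\, Y > t)}{b(t)\,\P(Y > t)} \longrightarrow 0,
\]
which is the desired conclusion \eqref{eq: asym_indep}.

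The main technical obstacle I foresee is handling possible atoms in $F_X$ or $F_Y$: the inequality $\P(Y > U_Y(s)) \le 1/s$ and the asymptotic $U_X(s)/b^{-1}(c s) \to 1$ are both standard in the continuous case, but in general require a small squeeze between the left- and right-continuous versions of the generalized inverse. This nuisance is manageable because $\bar F_X$ and $\bar F_Y$ are regularly varying and hence asymptotically continuous at infinity (every atom is negligible relative to $\bar F$), so the approximations $U_X(s) \sim b^{-1}(cs)$ and $\P(Y > U_Y(s)) \sim 1/s$ are retained in the limit. Beyond this technicality, the proof is a short composition of monotonicity of probabilities with the standard Karamata-type properties of regularly varying inverses.
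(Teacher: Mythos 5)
Your proof is correct, and it reaches the conclusion by a genuinely different route than the paper's. The paper stays entirely on the probability scale: it sets $p_X(t)=F_X(t)$, $p_Y(t)=F_Y(t)$, uses the a.s.\ identity of the events $\{X>t\}$ and $\{X>F_X^{-1}(p_X(t))\}$, and then bounds
$\P(X>t\mid Y>t)$ by $\P\bigl(X>F_X^{-1}(p_Y(t))\mid Y>F_Y^{-1}(p_Y(t))\bigr)+\bigl|\tfrac{1-p_X(t)}{1-p_Y(t)}-1\bigr|$ via a two-case comparison of $p_X(t)$ and $p_Y(t)$; the correction term vanishes by tail equivalence, which follows from the shared constant $c$. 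You instead work on the quantile scale: you invert the regular variation to get $U_X(s)\sim U_Y(s)\sim b^{\leftarrow}(cs)$, deflate the threshold to $(1-\epsilon)t$ so that both quantiles eventually sit below $t$, and then transfer the hypothesis through the normalization $b(t)/s(t)\to c/(1-\epsilon)$. Both arguments hinge on the same key fact — the marginal quantile functions are asymptotically interchangeable because the scale constants coincide — but the paper's version is more elementary (no asymptotic inverses of $b$, no Karamata-type inversion, no $\epsilon$-slack), while yours leans on standard regular-variation machinery and is arguably shorter once that machinery is granted. Your handling of the generalized-inverse technicalities is sound: the only inequality you actually need exactly is $\P(Y>F_Y^{-1}(p))\le 1-p$, which holds for the generalized inverse without any continuity assumption, and the asymptotic equivalences $U_X(s)\sim b^{\leftarrow}(cs)$ and $b^{\leftarrow}(b(u))\sim u$ are standard for monotone regularly varying functions of nonzero index. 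One cosmetic remark: since your bound gives $\limsup_t b(t)\P(X>t,Y>t)\le \tfrac{c}{1-\epsilon}\cdot 0=0$ already for a single fixed $\epsilon\in(0,1)$, there is no need to let $\epsilon\downarrow 0$.
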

\begin{proof}
    Let $t \in \R \text{ and define } p_X(t)=F_X(t),\; p_Y(t)=F_Y(t).$ Clearly, as $t\to \infty, p_X(t)\to 1^- \text{ and } p_Y(t) \to 1^-.$ Now,
    \begin{align*}
        \P\rbrac{X>t \mid Y>t}=\frac{\P\rbrac{X>t,Y>t}}{\P\rbrac{Y>t}}=\frac{\P\rbrac{X>F_X^{-1}(p_X(t)),Y>F_{Y}^{-1}(p_Y(t))}}{\P\rbrac{Y>F_Y^{-1}(p_Y(t))}}
    \end{align*}
    Note that the above equality \emph{does not} assume $t=F_X^{-1}(p_X(t))=F_Y^{-1}(p_Y(t))$. Instead we observe $\P(F_X^{-1}(p_X(t))<X\leq t)=\P(F_Y^{-1}(p_Y(t))<Y\leq t)=0$, implying that $\{X>t\}$ and $\{X>F_X^{-1}(p_X(t))\}$ are almost surely the same events (same for Y).\\ Also, the above expressions are all well-defined for every $t$ as the denominator is never exactly zero. This is because we assumed the tail-dependence coefficient $\lambda$ to exist which implies $X \text{ and }Y$ both have supports extending to infinity,i.e.,
    $$\sup\{x\; : \;\P\rbrac{X>x}>0\}=\infty\quad \text{  (same for Y)}$$ 
    Next observe that due to \eqref{eq: tails_X&Y}, X and Y are \emph{tail equivalent.} Indeed,
    \begin{align}\label{eq: tail_equi}
        &\lim_{t\to \infty}b(t)\P(X>t)= c \text{ and } \lim_{t\to \infty}b(t)\P(Y>t)= c\nonumber \\
        &\implies \lim_{t \to \infty} \frac{\P\rbrac{X>t}}{\P\rbrac{Y>t}}=1 \text{ or } \lim_{t\to \infty} \frac{1-p_X(t)}{1-p_Y(t)}=1
    \end{align}
    Now, if $p_X(t)\geq p_Y(t), \text{ then } F_X^{-1}(p_X(t)) \geq F_X^{-1}(p_Y(t))$
    \begin{align}\label{ineq: bd_on_conditional}
        & \implies \P\rbrac{X>F_X^{-1}(p_X(t)),Y>F_Y^{-1}(p_Y(t))}\leq \P\rbrac{X>F_X^{-1}(p_Y(t)),Y>F_Y^{-1}(p_Y(t))} \nonumber \\
        & \implies \frac{\P\rbrac{X>t,Y>t}}{\P\rbrac{Y>t}}\leq \frac{\P\rbrac{X>F_X^{-1}(p_Y(t)),Y>F_Y^{-1}(p_Y(t))}}{\P\rbrac{Y>F_Y^{-1}(p_Y(t))}}
    \end{align}
    On the other hand, if $p_X(t)< p_Y(t), \text{ then } F_X^{-1}(p_X(t)) \leq F_X^{-1}(p_Y(t))$ so we can't use the above bound. However, we can establish a bound infinitesimally close to the last one:
    \begin{align}\label{ineq: bd_on_conditional2}
        & \frac{\P \rbrac{X>t,Y>t}}{\P\rbrac{Y>t}} = \frac{\P\rbrac{X>F_X^{-1}(p_X(t)),Y>F_Y^{-1}(p_Y(t))}}{\P\rbrac{Y>F_Y^{-1}(p_Y(t))}} \nonumber \\
        &= \frac{\P\rbrac{X>F_X^{-1}(p_Y(t)),Y>F_Y^{-1}(p_Y(t))}}{\P\rbrac{Y>F_Y^{-1}(p_Y(t))}}+\frac{\P\rbrac{F_X^{-1}(p_Y(t))\geq X>F_X^{-1}(p_X(t)),Y>F_Y^{-1}(p_Y(t))}}{\P\rbrac{Y>F_Y^{-1}(p_Y(t))}} \nonumber \\
        & \leq \frac{\P\rbrac{X>F_X^{-1}(p_Y(t)),Y>F_Y^{-1}(p_Y(t))}}{\P\rbrac{Y>F_Y^{-1}(p_Y(t))}}+ \frac{\P\rbrac{F_X^{-1}(p_Y(t))\geq X>F_X^{-1}(p_X(t))}}{\P\rbrac{Y>F_Y^{-1}(p_Y(t))}} \nonumber  \\
        & = \frac{\P\rbrac{X>F_X^{-1}(p_Y(t)),Y>F_Y^{-1}(p_Y(t))}}{\P\rbrac{Y>F_Y^{-1}(p_Y(t))}}+ \frac{p_Y(t)-p_X(t)}{1-p_Y(t)} \nonumber \\
        & = \frac{\P\rbrac{X>F_X^{-1}(p_Y(t)),Y>F_Y^{-1}(p_Y(t))}}{\P\rbrac{Y>F_Y^{-1}(p_Y(t))}}+ \frac{1-p_X(t)}{1-p_Y(t)}-1 \nonumber \\
        & \leq \frac{\P\rbrac{X>F_X^{-1}(p_Y(t)),Y>F_Y^{-1}(p_Y(t))}}{\P\rbrac{Y>F_Y^{-1}(p_Y(t))}}+ \abs{\frac{1-p_X(t)}{1-p_Y(t)}-1}
    \end{align}
Thus, combining \eqref{ineq: bd_on_conditional} and \eqref{ineq: bd_on_conditional2}, we get that for all $t,$
\begin{align}
    \P\rbrac{X>t \mid Y>t} \leq \P\rbrac{X>F_X^{-1}(p_Y(t))\; \mid \; Y>F_Y^{-1}(p_Y(t))}+ \abs{\frac{1-p_X(t)}{1-p_Y(t)}-1}
\end{align}
Now the RHS of the above converges to $0 \text{ as } t\to \infty$. This is because,
\begin{align*}
    \lim_{t \to \infty} \P\rbrac{X>F_X^{-1}(p_Y(t))\; \mid \; Y>F_Y^{-1}(p_Y(t))}&=\lim_{p \to 1^-}\P\rbrac{X>F_X^{-1}(p) \mid Y>F_Y^{-1}(p)}\\
    &=\lambda(X,Y)=0
\end{align*}
And the second term goes to $0$ due to \eqref{eq: tail_equi}. Hence,
$$\lim_{t \to \infty}\P\rbrac{X>t \mid Y>t}=0$$
which proves the claim.
\end{proof}

\begin{corollary}
    Say $X,Y \text{ are non-negative random variables in }  RV_{-1}(b,c_x) \text{ and } RV_{-1}(b,c_y)$ for some $c_x,c_y>0$ and some regularly varying monotone function $b(t) \to \infty$, respectively. Also assume that they are asymptotically independent in the upper tail. Then,
    \begin{align}\label{eq: asym_indep-cx-cy}
        \lim_{t \to \infty} \P(X/c_x>t \mid Y/c_y>t)=0
    \end{align}
\end{corollary}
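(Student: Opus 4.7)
The plan is to reduce the corollary to Lemma~\ref{le: equalising_thresholds} via a positive-scaling argument. Set $X' := X/c_x$ and $Y' := Y/c_y$. If I can verify that (a) both $X'$ and $Y'$ belong to $RV_{-1}(b,1)$ with the same constant $c = 1$ and (b) $\Lambda(X',Y') = 0$, then Lemma~\ref{le: equalising_thresholds} applied to the pair $(X',Y')$ delivers $\lim_{t\to\infty} \P(X' > t \mid Y' > t) = 0$, which is precisely \eqref{eq: asym_indep-cx-cy}.

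The key ingredient powering the reduction is that $b$ is regularly varying of index $1$. First I would observe that the assumption $b(t)\P(X > tx) \to c_x x^{-1}$ entails, upon dividing by the case $x=1$, that $\P(X>tx)/\P(X>t) \to x^{-1}$; hence the survival function of $X$ is regularly varying with index $-1$, which forces $b(t) \sim t/L_X(t)$ for some slowly varying $L_X$ (this is essentially part (i) of Proposition~\ref{p:rel-compact}). Consequently, $b(\lambda t)/b(t) \to \lambda$ for every fixed $\lambda > 0$.

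Then I would perform the one non-trivial calculation: by the substitution $s = t c_x$ in the defining limit for $X$,
\begin{align*}
b(t)\,\P(X' > t x) \;=\; \frac{b(t)}{b(t c_x)}\cdot b(t c_x)\,\P\bigl(X > (t c_x) x\bigr) \;\longrightarrow\; \frac{1}{c_x}\cdot c_x x^{-1} \;=\; x^{-1},
\end{align*}
and the identical argument gives $b(t)\P(Y' > tx) \to x^{-1}$. Thus $X',Y' \in RV_{-1}(b,1)$ with a common scale, as needed. For (b), I would note that positive rescaling leaves the copula, and therefore the tail-dependence coefficient, invariant: since $F_{X'}^{-1}(p) = F_X^{-1}(p)/c_x$ and $F_{Y'}^{-1}(p) = F_Y^{-1}(p)/c_y$, the events $\{X' > F_{X'}^{-1}(p)\}$ and $\{X > F_X^{-1}(p)\}$ coincide (similarly for $Y'$), whence $\Lambda(X',Y') = \Lambda(X,Y) = 0$.

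With the pair $(X',Y')$ now fitting the hypotheses of Lemma~\ref{le: equalising_thresholds}, the conclusion is immediate. There is no real obstacle in this proof; the only step that could feel subtle is the identification of the index of $b$, but this is forced by the marginal assumption and is already isolated in Proposition~\ref{p:rel-compact}(i). Everything else is a one-line scaling manipulation.
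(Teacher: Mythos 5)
Your proposal is correct and follows essentially the same route as the paper: rescale to $X/c_x$, $Y/c_y$, check they lie in $RV_{-1}(b,1)$ and remain tail-independent (via the quantile identity $F_{X/c_x}^{-1}(p)=c_x^{-1}F_X^{-1}(p)$), and invoke Lemma~\ref{le: equalising_thresholds}. The only difference is cosmetic: you verify the rescaled marginal limit by a change of variable in $t$ using the index-$1$ regular variation of $b$, whereas the direct substitution $x\mapsto c_x x$ in the hypothesis $b(t)\P(X>tx)\to c_x x^{-1}$ gives $b(t)\P(X>t c_x x)\to c_x(c_x x)^{-1}=x^{-1}$ immediately, with no property of $b$ needed.
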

\begin{proof}
    Clearly, $X\in RV_{-1}(b,c_x), Y\in RV_{-1}(b,c_y) \implies X/c_x, Y/c_y \in RV_{-1}(b,1)$. 
    Moreover, using the fact that $F_{X/c_x}^{-1}(p)=c_x^{-1}F_X^{-1}(p), \; F_{Y/c_y}^{-1}(p)=c_y^{-1}F_Y^{-1}(p) $,
    $$\lambda(X,Y)=\lambda\rbrac{\frac{X}{c_x},\frac{Y}{c_y}}=0$$
    Thus, using Lemma \ref{le: equalising_thresholds} we are done.
\end{proof}

\begin{proposition}\label{p:same_const_indep_implies_mrv}
    Say $X,Y \text{ are non-negative random variables in }  RV_{-1}(b,c)$. If they are also asymptotically independent, i.e.,
    $\lambda(X,Y)=0$, then, $(X,Y) \in RV_{-1}(b,\mu_{iid}^+)$ where $\mu_{iid}^+$ is the limit measure concentrated on the positive axes corresponding to the random vector comprised of i.i.d. positive $RV_{-1}(b,c)$ random variables. 
\end{proposition}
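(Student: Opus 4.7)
The plan is to exploit Proposition \ref{p:rel-compact}: working on $\R_0^2$, the rescaled tail measures $\mu_t := b(t)\,\P[(X,Y)/t \in \cdot]$ form a relatively compact family in the complete separable metric space $(\M_0, d_{\M_0})$. It therefore suffices to show that \emph{every} subsequential $\M_0$-limit of $\mu_t$ equals $\mu_{iid}^+$; in that case the full family converges, $\mu_t \stackrel{\M_0}{\longrightarrow} \mu_{iid}^+$, which by definition is $(X,Y) \in RV_{-1}(b, \mu_{iid}^+)$.

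Fix any $t_n \to \infty$ and extract a subsequence along which $\mu_{t_n} \stackrel{\M_0}{\longrightarrow} \mu$ with $\mu \in \M_0(\R^2)$. First I would identify the marginals of $\mu$: the set $A_1(s) := \{(x,y) : x > s\}$ is bounded away from the origin for every $s > 0$, and for all but countably many $s$ its boundary is $\mu$-null; at such $s$, the $\M_0$-convergence together with \eqref{eq: tails_X&Y} gives $\mu(A_1(s)) = \lim_n b(t_n)\P[X > t_n s] = c/s$, and analogously for the $y$-projection. Next, the crucial step is to kill the mass on the open positive quadrant. For any $s, s' > 0$ with $u := \min(s, s')$, one has
\begin{equation*}
\mu_t\bigl(\{x > s,\, y > s'\}\bigr) \le b(t)\,\P[X > tu,\, Y > tu] = \frac{b(t)}{b(tu)}\cdot \bigl[b(tu)\,\P(Y > tu)\bigr]\cdot \P[X > tu \mid Y > tu].
\end{equation*}
Part (i) of Proposition \ref{p:rel-compact} yields $b(t)/b(tu) \to u$, relation \eqref{eq: tails_X&Y} yields $b(tu)\,\P(Y > tu) \to c/u$, and Lemma \ref{le: equalising_thresholds} yields $\P[X > tu \mid Y > tu] \to 0$. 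Hence the right-hand side tends to $0$, so, after passing through the subsequence, $\mu(\{x > s,\,y > s'\}) = 0$ for $s, s'$ outside a countable exceptional set, and then for all $s, s' > 0$ by monotonicity.

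Combining the marginal identification with the vanishing on the open positive quadrant forces $\mu$ to concentrate on the two positive half-axes with Pareto-type densities matching the marginals; in other words, $\mu = \mu_{iid}^+$. Since every subsequential limit is the same, relative compactness upgrades this to $\mu_t \stackrel{\M_0}{\longrightarrow} \mu_{iid}^+$. The main care point, which I expect to be the principal (though routine) obstacle, is consistently evaluating $\mu$ on continuity sets: any boundedly finite $\mu$ charges at most countably many hyperplanes of the form $\{x = s\}$ or $\{y = s'\}$, so dense choices of $s, s'$ are always available and the identification of $\mu$ extends by the monotone continuity of measures along nested half-open rectangles.
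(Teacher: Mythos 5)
Your proposal is correct and follows essentially the same route as the paper: relative compactness of $\{\mu_t\}$ from Proposition \ref{p:rel-compact}, Lemma \ref{le: equalising_thresholds} to annihilate the mass off the axes, marginal regular variation to pin down the mass on the axes, and uniqueness of subsequential limits to upgrade to full convergence (the paper merely phrases the off-axis step via Urysohn test functions rather than your direct evaluation on continuity sets $\{x>s,\,y>s'\}$). Two harmless constant slips: $b(t)/b(tu)\to 1/u$ (not $u$) since $b(t)\sim L(t)t$, and $b(tu)\,\P(Y>tu)\to c$ (not $c/u$); either way the product with $\P[X>tu\mid Y>tu]\to 0$ vanishes, so the argument stands.
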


\begin{proof}
    From Lemma \ref{le: equalising_thresholds} we know that,
    \begin{align*}
        &\lim_{t \to \infty}\P\rbrac{X>t \mid Y>t}=0\\
        & \implies \lim_{t \to \infty}\frac{\P\rbrac{X>t,Y>t}}{\P\rbrac{Y>t}}=0\\
        &\implies \lim_{t \to \infty}\frac{b(t)\P\rbrac{X>t,Y>t}}{b(t)\P\rbrac{Y>t}}=0
    \end{align*}
    Now, due to \eqref{eq: tails_X&Y},
    $$\lim_{t \to \infty} b(t)\P\rbrac{Y>t}=c>0$$
    Combining with the previous equality,
    \begin{align*}
       & \lim_{t\to\infty} b(t)\P\rbrac{X>t,Y>t}=0\\
        & \implies \lim_{t\to \infty}b(t)\P\rbrac{(X,Y) \in t \cdot B_1\cap B_2}=0
    \end{align*}
    where $B_1=[1,\infty)\times \R_{\geq0}$ and $B_2=\R_{\geq 0} \times [1,\infty)$.
    Now note that for any $\epsilon>0,\; X/\epsilon \text{ and } Y/\epsilon \in RV_{-1}(b,c/\epsilon)$. Thus, all the above results hold by replacing $(X,Y) \text{ by } \rbrac{\frac{X}{\epsilon},\frac{Y}{\epsilon}}$. As a result, $\forall\; \epsilon>0,$
    \begin{align}\label{eq:zero_mass_boxes}
        \lim_{t\to \infty}b(t)\P\rbrac{(X,Y) \in t\cdot\rbrac{\epsilon (B_1\cap B_2)}}=0
    \end{align}
    Denoting $(X,Y) \text{ by } Z,$ let $\mu_t(A):=b(t)\P\rbrac{\frac{Z}{t}\in A}$ be the rescaled tail measure of $Z$ as defined in Proposition \ref{p:rel-compact}. Thus, 
    \begin{align}\label{eq: lim_boxes_zero}
    \forall\; \epsilon>0,\;\lim_{t\to \infty}\mu_t(\epsilon (B_1\cap B_2))=0. 
    \end{align}
    Now using Proposition \ref{p:rel-compact}, the above set of rescaled measures is relatively compact, so $\forall\; t_n\to \infty\; \exists\; n_k \to \infty \ni \{\mu_{t_{n_k}}\}$ converges to some measure $\mu' \in \M_0.$ To prove the claim it is enough to show that any such $\mu'$ is equal to $\mu_{iid}^+$. This guarantees uniqueness of subsequential limits of $\mu_t$, which in turn implies convergence of $\mu_t$ to $\mu_{iid}^+.$\\
    
    Then by Proposition \ref{th:conv_criteria}, $\forall\; f \in \mathcal{C}_0,\; \int_{\R^2_0} f d\mu_t \longrightarrow \int_{\R^2_0} fd\mu' $ as $t \to \infty.$ Consider a closed BAFO rectangle $R_1$ and an open BAFO rectangle $R_2 \supset R_1$, both not touching the axes. More rigorously, if $A_x:=(0,\infty)\times\{0\}$ (the positive X-axis) and $A_y:=\{0\} \times (0,\infty)$ (the positive Y-axis), then $R_1\subset R_2 \subset \R_0^2\setminus \rbrac{A_x \cup A_y}$. Now, Urysohn's lemma guarantees us the existence of a continuous function f such that $f \in [0,1],\; f\equiv1  \text{ on } R_1 \text{ and supp}(f)=\overline{\{x: f(x)>0\}} \subset R_2. $ Then,
    \begin{align*}
        \int_{\R_0^2}fd\mu_t= \int_{R_2}fd\mu_t \leq \mu_t(R_2)
    \end{align*}
    Let $\{(a,y): y>0\} \text{ and } \{(x,b) : x>0\}$ be the left and bottom edge of $R_2$ respectively. Then $R_2 \subset (a \wedge b)(B_1 \cap B_2) \implies \mu_t(R_2)\leq \mu_t((a \wedge b) (B_1\cap B_2))$. Thus, by \eqref{eq: lim_boxes_zero},
    \begin{align*}
        &\lim_{t \to \infty} \int_{\R_0^2}f d\mu_t \leq \lim_{t\to \infty} \mu_t((a\wedge b)(B_1\cap B_2))=0\\
        & \implies \int_{\R_0^2}fd\mu'=0\\
        & \implies \int_{R_1}fd\mu'=0 \implies \mu'(R_1)=0
    \end{align*}
    The last step holds because $f$ is identically 1 on $R_1$. Hence, $\mu'$ is zero on any closed BAFO rectangle in $\R_0^2$ which does not touch the axes. Note that $\R_0^2 \setminus (A_x \cup A_y)$ is the countable union of such rectangles, so,
    \begin{align}
        \mu'(\R_0^2\setminus(A_x \cup A_y))=0
    \end{align}
    To complete this proof, take a BAFO Borel set $E \ni\;\mu'(\partial E)=0\text{ and let }$
    \begin{align}\label{e:axes_intersections}
        & E_x:=\{x: (x,0) \in E \cap A_x\} \text{ (intersection of $E$ with X-axis), and } \nonumber \\
        & E_y:=\{y: (0,y) \in E \cap A_y\} \text{ (intersection of $E$ with Y-axis)}
    \end{align}
    Then,
    \begin{align*}
         \mu'(E) & =\mu'(E_x \times \{0\})+\mu'(\{0\} \times E_y)+\mu'(E \cap (\R_0^2\setminus (A_x \cup A_y)))\\
         &= \mu'(E_x \times \R)+\mu'(\R \times E_y)+0\\
        &=\lim_{k\to \infty} b(t_{n_k})\P(X/t_{n_k} \in E_x)+\lim_{k\to \infty} b(t_{n_k})\P(Y/t_{n_k} \in E_y)\\
        &= \mu_{c}(E_x) +\mu_c(E_y)=\mu_{iid}^+(E)
    \end{align*}
    where $d\mu_c:= cx^{-2} dx$ is the limit measure of a $RV_{-1}(b,c)$ random variable. Note that the convergence in the third equality holds because $E$ is BAFO Borel implies $E_x \times \R$ is too and $\mu'(\partial(E_x \times \R))=\mu'(\partial E_x \times \R)= \mu'(\partial E_x \times \{0\}) \leq \mu'(\partial E)=0$.\\
    Thus, $\mu'=\mu_{iid}^+$ for every subsequential limit of $\mu_t,$ which implies $\mu_t \longrightarrow \mu_{iid}^+ \text{ as } t \to \infty$ which proves the claim.
\end{proof}
\begin{corollary}\label{c: non_neg_mrv}
    Say $X,Y \text{ are non-negative random variables in }  RV_{-1}(b,c_x)$ and $RV_{-1}(b,c_y)$ respectively. If they are also asymptotically independent, then, $(X,Y) \in RV_{-1}(b,\mu_{indep}^+)$ where $\mu_{indep}^+$ is the limit measure concentrated on the positive axes corresponding to the random vector comprised of independent positive $RV_{-1}(b,c_x) \text{ and }RV_{-1}(b,c_y)$ random variables. 
\end{corollary}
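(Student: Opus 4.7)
My plan is to reduce the corollary to Proposition \ref{p:same_const_indep_implies_mrv} (the equal-constants case) by rescaling the components to a common regularly varying constant, and then to push the resulting exponent measure back through the scaling map. Concretely, define $\widetilde X := X/c_x$ and $\widetilde Y := Y/c_y$, and consider the diagonal map $\phi : \R_0^2 \to \R_0^2$ with $\phi(u,v) := (c_x u, c_y v)$, so that $(X,Y) = \phi(\widetilde X,\widetilde Y)$.

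\textbf{Step 1: Hypotheses transfer under rescaling.} First I would check that $\widetilde X,\widetilde Y \in RV_{-1}(b,1)$. This is immediate from the assumptions: for every $x>0$,
$$
b(t)\,\P[\widetilde X > tx] = b(t)\,\P[X > t(c_x x)] \longrightarrow \frac{c_x}{c_x x} = \frac{1}{x},
$$
and similarly for $\widetilde Y$. Second, I would observe that the tail-dependence coefficient is invariant under strictly positive linear rescalings: since $F_{\widetilde X}^{-1}(p) = F_X^{-1}(p)/c_x$, the events $\{\widetilde X > F_{\widetilde X}^{-1}(p)\}$ and $\{X > F_X^{-1}(p)\}$ coincide, and similarly on the $Y$ side, so $\Lambda(\widetilde X,\widetilde Y) = \Lambda(X,Y) = 0$.

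\textbf{Step 2: Apply Proposition \ref{p:same_const_indep_implies_mrv} and push forward.} By Proposition \ref{p:same_const_indep_implies_mrv} applied with common constant $c=1$, we obtain $(\widetilde X,\widetilde Y) \in RV_{-1}(b,\mu_{iid}^+|_{c=1})$. The key observation is that $\phi$ is a homeomorphism of $\R_0^2$ that commutes with scalar multiplication, i.e.\ $\phi(tu,tv) = t\phi(u,v)$, so for every Borel set $A \subset \R_0^2$, $\{(X,Y) \in tA\} = \{(\widetilde X,\widetilde Y) \in t\,\phi^{-1}(A)\}$. Taking $A$ BAFO with $\mu_{indep}^+(\partial A)=0$, the set $\phi^{-1}(A)$ is also BAFO (since $\phi$ is bi-Lipschitz on bounded-away-from-$0$ sets) and, because $\phi$ is a homeomorphism, $\partial \phi^{-1}(A) = \phi^{-1}(\partial A)$ has zero mass under $\mu_{iid}^+|_{c=1}$ provided the pushforward identity holds.

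\textbf{Step 3: Identification of the limit measure.} The last thing to verify is the identification $\mu_{iid}^+|_{c=1} \circ \phi^{-1} = \mu_{indep}^+$. Both measures are supported on the two positive half-axes (which are invariant under $\phi$), so I only need to compare the one-dimensional densities. On the positive $X$-axis, pushing $u^{-2}\,du$ forward by $u \mapsto c_x u$ yields $c_x x^{-2}\,dx$, which is precisely the density $\mu_{indep}^+$ must assign (since the $X$-marginal is $RV_{-1}(b,c_x)$). The analogous computation on the $Y$-axis matches $c_y y^{-2}\,dy$. Combining Steps 1--3, $b(t)\,\P[(X,Y) \in tA] \to \mu_{indep}^+(A)$ for every $\mu_{indep}^+$-continuity BAFO Borel set $A$, establishing the claim. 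I do not anticipate a substantive obstacle: the only subtle point is verifying that the continuity-set condition transfers across $\phi$, which follows routinely from the fact that $\phi$ is a homeomorphism preserving bounded-away-from-origin sets.
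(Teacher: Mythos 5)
Your proposal is correct and follows essentially the same route as the paper: rescale to $\widetilde X=X/c_x$, $\widetilde Y=Y/c_y$, note that the common-constant hypotheses and asymptotic independence transfer, invoke Proposition \ref{p:same_const_indep_implies_mrv}, and transfer the limit measure back through the rescaling. The only (immaterial) difference is in the final transfer step: you push forward general BAFO continuity sets through the diagonal homeomorphism $\phi$ and identify $\mu_{iid}^+\circ\phi^{-1}=\mu_{indep}^+$ directly, whereas the paper instead reduces to checking convergence on the rectangle complements $[\boldsymbol{0},\boldsymbol{z}]^c$ via Lemma 6.1 of \cite{resnick:1987}; both are valid completions of the same argument.
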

\begin{proof}
    Clearly, $X\in RV_{-1}(b,c_x), Y\in RV_{-1}(b,c_y) \implies X/c_x, Y/c_y \in RV_{-1}(b,1)$. Moreover, using the fact that $F_{X/c_x}^{-1}(p)=c_x^{-1}F_X^{-1}(p), \; F_{Y/c_y}^{-1}(p)=c_y^{-1}F_Y^{-1}(p) $,
    
    $$\lambda\rbrac{\frac{X}{c_x},\frac{Y}{c_y}}=\lim_{p \to 1^-}\P\rbrac{\frac{X}{c_x}>F_{X/c_x}^{-1}(p)\; \Bigg \vert \frac{Y}{c_y}>F_{Y/c_y}^{-1}(p)}=\lambda(X,Y)=0$$
    
    Thus, $X/c_x \text{ and }Y/c_y$ are asymptotically independent too.\\
    By Proposition \ref{p:same_const_indep_implies_mrv}, 
    $$\rbrac{\frac{X}{c_x},\frac{Y}{c_y}} \in RV_{-1}(b,\mu_{iid}^+)$$
    Now note that, $\mu_{indep}^+$ is
    $$\mu_{indep}^+(E)=\mu_{c_x}(E_x)+\mu_{c_y}(E_y)\quad \forall \text{ Borel subsets } E \text{ of }\R_+^2\setminus\{\boldsymbol{0}\}$$
    where $E_x,E_y$ are as in \eqref{e:axes_intersections}, $d\mu_{c_{x}}=c_{x}u^{-2}du$ and $d\mu_{c_y}=c_yu^{-2}du$.
    To prove $(X,Y) \in RV_{-1}(b,\mu_{indep}^+)$, using Lemma 6.1 in \cite{resnick:1987},it is enough to show that,
    $$\lim_{t \to \infty}b(t)\P\rbrac{\rbrac{\frac{X}{t},\frac{Y}{t}}\in[\boldsymbol{0},\boldsymbol{z}]^c}=\mu_{indep}^+([\boldsymbol{0},\boldsymbol{z}]^c)\quad \forall\;\boldsymbol{z}=(z_1,z_2)\in \R_+^2$$
    Indeed, 
    \begin{align*}
        & \lim_{t \to \infty}b(t)\P\rbrac{\rbrac{\frac{X}{t},\frac{Y}{t}}\in[\boldsymbol{0},\boldsymbol{z}]^c}\\
        & =\lim_{t \to \infty}b(t)\P\rbrac{\rbrac{\frac{X/c_x}{t},\frac{Y/c_y}{t}}\in([0,z_1/c_x]\times[0,z_2/c_y])^c}\\
        & = \mu_{iid}^+(([0,z_1/c_x]\times[0,z_2/c_y])^c)\\
        & = c_xz_1^{-1} + c_yz_2^{-1}\\
        & = \mu_{c_x}(([\boldsymbol{0},\boldsymbol{z}]^c)_x)+\mu_{c_y}(([\boldsymbol{0,\boldsymbol{z}}]^c)_y)=\mu_{indep}^+([\boldsymbol{0},\boldsymbol{z}]^c)
    \end{align*}
    This proves the claim.
\end{proof}

\begin{proposition}\label{p:generalise_to_real_mrv}
    Say $X,Y$ are two real random variables with regularly varying upper and lower tails of index $-1$, i.e. $\exists\; b(t) \to \infty$ and $c_X^\pm,c_Y^\pm>0$ such that $\forall x>0,$
    \begin{align}\label{e:RV_of_tails}
         \lim_{t \to \infty}b(t)\P(\pm X>tx)=c_X^\pm x^{-1}\ \  \text{ and } \ \ 
         \lim_{t \to \infty}b(t)\P(\pm Y>tx)=c_Y^\pm x^{-1}
    \end{align}
    Suppose they are asymptotically independent in all tails, i.e., the following tail dependence coefficients are zero for all combinations of $\pm$:
    \begin{align}\label{e:all_tail_coeff_zero}
        \lambda(\pm X, \pm Y)=0
    \end{align}
    Then, $(X,Y) \in RV_{-1}(b,\mu_{indep})$ where $\mu_{indep}$ is the limit measure concentrated on the axes corresponding to the random vector comprised of independent random variables with $RV_{-1}(b,c_X^{\pm})$ and 
    $RV_{-1}(b,c_Y^{\pm})$ tails, respectively. 
\end{proposition}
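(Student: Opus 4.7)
The plan is to mirror the strategy used in the non-negative case (Proposition \ref{p:same_const_indep_implies_mrv} and Corollary \ref{c: non_neg_mrv}), now extending the argument to all four quadrants of $\R^2$. Step 1 is to invoke Proposition \ref{p:rel-compact} to obtain relative compactness of the rescaled tail measures $\mu_t(\cdot) := b(t) \P[(X,Y)/t \in \cdot]$; this applies since condition \eqref{e:RV_of_tails} is exactly \eqref{e:p:rel-compact} for both marginals. Hence for any $t_n \to \infty$ one can extract a subsequence $t_{n_k}$ along which $\mu_{t_{n_k}} \to \mu'$ in $\M_0$ for some $\mu' \in \M_0$, and the task reduces to showing that every such subsequential limit equals $\mu_{indep}$.

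Step 2 is to show that $\mu'$ is supported on the union of the four semi-axes $A_x^\pm$, $A_y^\pm$. For this, I would first generalize Lemma \ref{le: equalising_thresholds} to allow possibly distinct tail constants; the proof carries over with only a bookkeeping adjustment, namely rescaling $X$ and $Y$ by their respective constants before invoking the tail-equivalence argument. Applying this generalization to each of the four sign-flipped pairs $(\epsilon_1 X, \epsilon_2 Y)$ together with the hypothesis $\Lambda(\epsilon_1 X, \epsilon_2 Y) = 0$ yields
$$
\lim_{t \to \infty} b(t) \P(\epsilon_1 X > tx,\ \epsilon_2 Y > ty) = 0,\qquad \forall\, x,y > 0,\ (\epsilon_1,\epsilon_2) \in \{+,-\}^2.
$$
Reproducing the Urysohn-function construction from the proof of Proposition \ref{p:same_const_indep_implies_mrv} within each of the four open quadrants then gives $\mu'(R) = 0$ for every closed BAFO rectangle $R$ lying strictly inside an open quadrant; since $\R_0^2 \setminus (A_x^+ \cup A_x^- \cup A_y^+ \cup A_y^-)$ is a countable union of such rectangles, $\mu'$ charges only the four semi-axes.

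Step 3 is to identify $\mu'$ on each semi-axis. For a BAFO Borel continuity set $E$, put $E_x^\epsilon := \{x > 0 : (\epsilon x, 0) \in E\}$ and $E_y^\epsilon := \{y > 0 : (0, \epsilon y) \in E\}$ for $\epsilon \in \{+,-\}$. The fattened sets $E_x^\epsilon \times \R$ are BAFO whenever $E$ is, and by Step 2 their $\mu'$-mass is concentrated on $A_x^\epsilon$. On the other hand, the marginal assumption \eqref{e:RV_of_tails} yields
$$
\lim_k b(t_{n_k}) \P(\epsilon X/t_{n_k} \in E_x^\epsilon) = \mu_{c_X^\epsilon}(E_x^\epsilon),
$$
and similarly for $Y$, where $d\mu_c = c u^{-2} du$. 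Summing the four semi-axis contributions gives $\mu'(E) = \mu_{indep}(E)$ for a sufficiently rich class of $E$, so uniqueness of the subsequential limit implies $\mu_t \to \mu_{indep}$ in $\M_0$, i.e., $(X,Y) \in RV_{-1}(b, \mu_{indep})$.

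The main obstacle I foresee is the continuity-set bookkeeping in Step 3: matching $\mu'$-continuity of the fattened strip $E_x^\epsilon \times \R$ to $\mu'$-continuity of $E$ is not automatic, since the boundary of the strip is an entire vertical line whose intersection with $A_x^\epsilon$ could a priori have positive $\mu'$-mass. The cleanest way around this is to restrict the verification of $\mu_t \to \mu_{indep}$ to a convergence-determining $\pi$-system on which the computation is transparent, namely the octant complements $[\mathbf{a}, \infty)^c$ (and their reflections), invoking the analog of Lemma 6.1 in \cite{resnick:1987} exactly as was done at the end of the proof of Corollary \ref{c: non_neg_mrv}. This sidesteps the continuity-set issue and pins down $\mu'$ uniquely.
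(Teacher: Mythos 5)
Your proposal is correct, and it reaches the same conclusion through the same underlying ingredients (asymptotic independence annihilates joint tail mass; the marginals pin down the mass on the axes), but it organizes the argument differently from the paper. The paper first shows that $\Lambda(\pm X,\pm Y)=0$ transfers to the positive and negative parts, i.e.\ $\Lambda(X_\pm,Y_\pm)=0$, and then invokes Corollary \ref{c: non_neg_mrv} as a black box to get $(X_\pm,Y_\pm)\in RV_{-1}(b,\mu_{indep}^+)$; after that it only needs to evaluate $\lim_t b(t)\P[(X,Y)\in tE]$ by splitting $E$ into its four quadrant pieces (each controlled by the MRV of the corresponding $(X_\pm,Y_\pm)$, whose limit charges only the axes) and its four semi-axis pieces (reduced to the one-dimensional marginals). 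You instead rerun the full subsequential-compactness machinery of Proposition \ref{p:rel-compact} directly on the signed vector $(X,Y)$, kill the off-axis mass of every subsequential limit by a four-quadrant Urysohn argument (which requires you to first extend Lemma \ref{le: equalising_thresholds} to unequal constants and to all four sign combinations — the paper gets this for free from the positive/negative-part reduction), and then identify the limit on the axes. Both routes are valid; the paper's is shorter because it reuses the already-established non-negative bivariate result, while yours is more self-contained and, to its credit, makes explicit the continuity-set subtlety for the fattened strips $E_x^\epsilon\times\R$ (a point the paper's own proof of Proposition \ref{p:same_const_indep_implies_mrv} treats rather tersely). One small correction to your closing step: since the candidate limit lives on all of $\R^2_0$ rather than the positive orthant, the convergence-determining class must be the two-sided analogue of Resnick's Lemma 6.1, namely complements of boxes $[-\mathbf{a},\mathbf{b}]$ containing the origin (together with their boundaries being $\mu_{indep}$-null, which holds for all but countably many $\mathbf{a},\mathbf{b}$), not the positive-orthant sets $[\mathbf{0},\mathbf{z}]^c$ alone.
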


\begin{proof}
    Note that \eqref{e:all_tail_coeff_zero} implies
    \begin{align}
        \lambda(X_{\pm},Y_{\pm})=0
    \end{align}
    where $X_{+},Y_{+} \text{ and }X_{-},Y_{-}$ represent the positive and negative parts of X and Y respectively. Indeed, for large $p$,
    \begin{align*}
        \{-X>F_{-X}^{-1}(p)\}=\{X<-F_{-X}^{-1}(p)\}=\{X_{-}>F_{-X}^{-1}(p)\}
    \end{align*}
    as large $p$ implies $F_{-X}^{-1}(p)$ is positive. Note that due to assumption of regular variation of tails, support of $X$ extends to both $+\infty$ and $-\infty$ so $F_{-X}^{-1}(p)$ is guaranteed to be positive if we take $p$ sufficiently large.\\
    Now, for all $x>0, F_{-X}(x)=F_{X_-}(x)$. Thus, if $p$ is sufficiently large, $F_{X_-}^{-1}(p)=F_{-X}^{-1}(p)$. Thus,
    \begin{align*}
        \{-X>F_{-X}^{-1}(p)\}=\{X_{-}>F_{-X}^{-1}(p)\}=\{X_->F_{X_-}^{-1}(p)\}
    \end{align*}
    Similarly we can conclude that $\{Y>F_Y^{-1}(p)\}=\{Y_+>F_{Y_+}^{-1}(p)\}$ for large p.
    Therefore,
    \begin{align*}
        \lambda(X_-,Y_+)& =\lim_{p\to1-}\P(X_->F_{X_-}^{-1}(p) \big\vert Y_+>F_{Y_+}^{-1}(p))\\
        & =\lim_{p\to1-}\P(-X>F_{-X}^{-1}(p) \big\vert Y>F_{Y}^{-1}(p))=\lambda(-X,Y)=0
    \end{align*}
    Similarly,
    \begin{align*}
        \lambda(X_-,Y_-)=\lambda(X_+,Y_+)=\lambda(X_+,Y_-)=0
    \end{align*}
    Observe that \eqref{e:RV_of_tails} implies that $X_{\pm} \in RV_{-1}(b,c_X^\pm) \text{ and } Y_{\pm} \in RV_{-1}(b,c_Y^\pm)$. 
    Thus using Corollary \ref{c: non_neg_mrv}, $(X_{\pm},Y_{\pm}) \in RV_{-1}(b,\mu_{indep}^+)$.\\
    Let $Q_{+,+}=\R^2_+,Q_{+,-}=\R_+ \times\R_-,Q_{-,-}=\R_-^2$  and $Q_{-,+}=\R_- \times\R_+$ denote the four 
     quadrants of $\R^2$ minus the axes and let $A_x^+, A_y^+, A_x^-, A_y^-$ denote the positive and negative X and Y axis respectively. Next take any BAFO Borel set $E \subset \R^2\setminus\{0\}$ such that $\mu_{indep}(\partial E)=0$. Then,
    \begin{align}\label{e:brkdown_mrv_real}
        &\lim_{t\to\infty}b(t)\P((X,Y)\in t\cdot E)\nonumber \\ & =\lim_{t\to\infty}b(t)\P((X,Y)\in t\cdot E \cap Q_{+,+})+\lim_{t\to\infty}b(t)\P((X,Y)\in t\cdot E \cap Q_{+,-})\nonumber \\
        & + \lim_{t\to\infty}b(t)\P((X,Y)\in t\cdot E \cap Q_{-,-})+\lim_{t\to\infty}b(t)\P((X,Y)\in t\cdot E \cap Q_{-,+})\nonumber \\
        &+ \lim_{t\to\infty}b(t)\P((X,Y)\in t\cdot E \cap A_x^+) + \lim_{t\to\infty}b(t)\P((X,Y)\in t\cdot E \cap A_x^-)\nonumber \\
        & + \lim_{t\to\infty}b(t)\P((X,Y)\in t\cdot E \cap A_y^+)+ \lim_{t\to\infty}b(t)\P((X,Y)\in t\cdot E \cap A_y^-)
    \end{align}
    if all the limits above exist.\\
    Now observe that $\{(X,Y)\in t \cdot Q_{\pm,\pm}\}=\{(X_{\pm},Y_{\pm}) \in t \cdot Q_{+,+}\}$. As $(X_{\pm},Y_{\pm}) \in$\\$ RV_{-1}(b,\mu_{indep}^+)$ and $\mu_{indep}^+$ assigns zero mass to any set not intersecting the axes,
    \begin{align*}
        \lim_{t\to\infty}b(t)\P((X,Y)\in t\cdot Q_{\pm,\pm})& =\lim_{t\to\infty}b(t)\P((X_{\pm},Y_{\pm}) \in t \cdot Q_{+,+})\\
        & =\mu_{indep}^+(Q_{+,+})=0\\
         \implies \lim_{t\to\infty}b(t)\P((X,Y)\in t\cdot E & \cap Q_{\pm,\pm}) \leq \lim_{t\to\infty}b(t)\P((X_\pm,Y_\pm)\in t\cdot Q_{+,+})=0
    \end{align*}
    Thus the first four terms in \eqref{e:brkdown_mrv_real} indeed exist and are zero!\\
    Let $E_x^+=\{x\in \R_+: (x,0)\in E\cap A_x^+\}$. Similarly define $E_x^-,E_y^+ \text{ and }E_y^-$. Then,
    \begin{align}\label{e:change to univariate real}
        &\lim_{t\to\infty}b(t)\P((X,Y)\in t\cdot E)\nonumber \\
        &= \lim_{t\to\infty}b(t)\P((X,Y)\in t\cdot E \cap A_x^+) + \lim_{t\to\infty}b(t)\P((X,Y)\in t\cdot E \cap A_x^-)\nonumber \\
        & + \lim_{t\to\infty}b(t)\P((X,Y)\in t\cdot E \cap A_y^+)+ \lim_{t\to\infty}b(t)\P((X,Y)\in t\cdot E \cap A_y^-) \nonumber\\
        &= \lim_{t\to\infty}b(t)\P((X,Y)\in t\cdot (E_x^+ \times \{0\})) + \lim_{t\to\infty}b(t)\P((X,Y)\in t\cdot (E_x^-\times \{0\}))\nonumber \\
        & + \lim_{t\to\infty}b(t)\P((X,Y)\in t\cdot (\{0\} \times E_y^+ ))+ \lim_{t\to\infty}b(t)\P((X,Y)\in t\cdot (\{0\} \times E_y^-))\nonumber\\
        &= \lim_{t\to\infty}b(t)\P((X,Y)\in t\cdot (E_x^+ \times \R)) + \lim_{t\to\infty}b(t)\P((X,Y)\in t\cdot (E_x^-\times \R))\nonumber \\
        & + \lim_{t\to\infty}b(t)\P((X,Y)\in t\cdot (\R \times E_y^+ ))+ \lim_{t\to\infty}b(t)\P((X,Y)\in t\cdot (\R \times E_y^-))\nonumber\\
        &= \lim_{t\to\infty}b(t)\P(X\in t\cdot E_x^+ ) + \lim_{t\to\infty}b(t)\P(X\in t\cdot E_x^-)\nonumber \\
        & + \lim_{t\to\infty}b(t)\P(Y\in t\cdot E_y^+ )+ \lim_{t\to\infty}b(t)\P(Y\in t\cdot  E_y^-)\nonumber\\
        &=\mu_{+X}(E_x^+)+\mu_{-X}(E_x^-)+\mu_{+Y}(E_y^+)+\mu_{-Y}(E_y^-)=\mu_{indep}(E)
    \end{align}
    where $d\mu_{\pm X}=c_X^\pm u^{-2}du \text{ and } d\mu_{\pm Y}=c_Y^\pm u^{-2}du$. Note that existence of all the limits involved in the above equalities is justified by the step below it, so no issues regarding existence remain. This proves the claim.
\end{proof}

\begin{theorem}\label{th:asym indep & heavy tails implies mrv}
    Let $X=(X_i)_{i=1}^d$ be a random vector whose marginals have regularly varying distributions with index $-1$, i.e., $\exists \text{ a monotone increasing function }b(t) \to \infty \text{ and } c_{\pm}(i)>0$ such that
    $$\lim_{t \to \infty}b(t)\P\rbrac{\pm X_i>tx}=c_{\pm}(i)x^{-1} \quad \forall x>0  \text{ and } \forall i=1,\ldots,d$$
    If  $\; \forall\; 1 \leq i \neq j \leq d$,
    $$\lambda(\pm X_i, \pm X_j)=0$$
    then, $X \in RV_{-1}(b,\mu_{indep}^{(d)})$, where $\mu_{indep}^{(d)}$ is the same as that in Proposition \ref{p:generalise_to_real_mrv} but in $d\in \N$ dimensions. 
\end{theorem}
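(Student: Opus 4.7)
The plan is to reduce the $d$-dimensional statement to Proposition~\ref{p:generalise_to_real_mrv} via a relative-compactness plus projection argument. The overall strategy has three stages: establish relative compactness of the rescaled tail measures, show that any subsequential $\M_0$-limit is supported on the coordinate axes, and identify the limit on each axis with the appropriate one-dimensional marginal measure.

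First I would introduce $\mu_t(\cdot) := b(t)\P[X/t \in \cdot]$. By Proposition~\ref{p:rel-compact}, the family $\{\mu_t : t > 1\}$ is relatively compact in $(\M_0, d_{\M_0})$, so it suffices to show that every $\M_0$-subsequential limit $\mu'$ equals $\mu_{indep}^{(d)}$; uniqueness of the subsequential limit then promotes this to $\mu_t \stackrel{\M_0}{\longrightarrow} \mu_{indep}^{(d)}$. Let
\begin{equation*}
A_i^\pm := \{x \in \R_0^d : \pm x_i > 0,\ x_k = 0 \text{ for } k \neq i\}, \qquad A := \bigcup_{i=1}^d \bigl(A_i^+ \cup A_i^-\bigr).
\end{equation*}
The key claim is $\mu'(\R_0^d \setminus A) = 0$. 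For each pair $i \neq j$, the bivariate projection $(X_i, X_j)$ inherits regularly varying marginals and pairwise asymptotic independence in all four tails from the hypothesis, so by Proposition~\ref{p:generalise_to_real_mrv} its 2D exponent measure sits on the axes of $\R^2$, which gives $\lim_{t\to\infty} b(t)\P[|X_i|>\epsilon t,\ |X_j|>\epsilon t] = 0$ for every $\epsilon > 0$. A Urysohn bump function $f$ with $\mathbf{1}_{\{|x_i|>\epsilon,|x_j|>\epsilon\}} \le f \le \mathbf{1}_{\{|x_i|>\epsilon/2,|x_j|>\epsilon/2\}}$, combined with Proposition~\ref{th:conv_criteria}, transfers this vanishing to $\mu'$. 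Writing $\R_0^d \setminus A$ as the countable union over pairs $i<j$ and $k \in \N$ of $\{|x_i|>1/k,\ |x_j|>1/k\}$ then yields $\mu'(\R_0^d \setminus A) = 0$.

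For the final identification, given a BAFO Borel set $E$ with $\mu_{indep}^{(d)}(\partial E) = 0$, define the axis slices $E_i^\pm := \{u \in \R_\pm \setminus \{0\} : u e_i \in E\}$, where $e_i$ is the $i$-th standard basis vector. Since $\mu'$ charges only $A$,
\begin{equation*}
\mu'(E) = \sum_{i=1}^d \sum_{\pm} \mu'(E \cap A_i^\pm),
\end{equation*}
and for each summand I would thicken $E \cap A_i^\pm$ to the cylinder $E_i^\pm \times \R^{d-1}$ (in the other coordinates), which differs from $E \cap A_i^\pm$ only on a set where $\mu'$ vanishes by step two. The one-dimensional regular variation of $\pm X_i$ then gives $\mu'(E \cap A_i^\pm) = \mu_{\pm X_i}(E_i^\pm)$ with $d\mu_{\pm X_i} = c_\pm(i)\, u^{-2}\, du$ on $\R_\pm$, and summing reconstructs $\mu_{indep}^{(d)}(E)$, exactly as in \eqref{e:change to univariate real}. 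The main obstacle is bookkeeping in the second step: transferring the two-dimensional tail-measure vanishing to the $d$-dimensional subsequential limit on sets that are neither open nor closed. This is the only nontrivial technical point, and it is resolved by the Urysohn portmanteau argument sketched above, together with verifying that $\mu_{indep}^{(d)}(\partial E) = 0$ implies the continuity-set conditions needed for the marginal and pairwise projections.
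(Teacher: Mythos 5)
Your proposal is correct and follows essentially the same route as the paper: both reduce to the bivariate result (Proposition \ref{p:generalise_to_real_mrv}) applied to each pair $(X_i,X_j)$ to kill all mass off the coordinate axes, and then identify the axis contributions with the one-dimensional marginal measures by thickening axis slices to cylinders. The only cosmetic difference is that you route through relative compactness (Proposition \ref{p:rel-compact}) and uniqueness of subsequential limits — the same device the paper uses for its two-dimensional building block — whereas the paper's $d$-dimensional argument computes $\lim_t b(t)\P[X\in tE]$ directly from the orthant-plus-axes decomposition.
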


\begin{proof}
    Define $Q_{S_0,S_1,S_{-1}}:=\{x \in \R^d : sgn(x_i)=\mathbb{I}[i \in S_1]-\mathbb{I}[i \in S_{-1}]\; \forall i \in [d]\}$ for all $S_0,S_1,S_{-1} \ni S_0 \sqcup S_1\sqcup S_{-1}=[d],\abs{S_1}, \abs{S_{-1}} \in \{0,1,\ldots,d\} \text{ and } \abs{S_0} \in\{0,1,\ldots,d-2\}$. Here $sgn(z)=\mathbb{I}[z>0]-\mathbb{I}[z<0]$. Similar to Proposition \ref{p:generalise_to_real_mrv}, also define $A_i^+,A_i^-\; \forall i\in [d]$ where $A_i^+$ represents the positive $i$-th axis and $A_i^-$ represents the negative $i$-th axis. Thus, $\rbrac{Q_{S_0,S_1,S_{-1}}}_{(S_0,S_1,S_{-1})}$ take out the axes and partition $\R_0^d \setminus \bigcup_{i=1}^d\rbrac{A_i^+ \cup A_i^-}$ according to positive, negative and zero coordinates. \\
    Now, note that $S_0$ can take at most $d-2$ coordinates, so at least two coordinates are \emph{always} non-zero. Thus, $ \forall\;S_0,S_1,S_{-1},\exists\;k\neq l\in[d] \ni\;\forall\;t>0,\{X\in t\cdot Q_{S_0,S_1,S_{-1}}\}\subset\{(X_k,X_l)\in t \cdot\rbrac{ \R^2_0\setminus \rbrac{(A_k^+ \cup A_k^-) \cup (A_l^+ \cup A_l^-)}}\}$. Here we abuse notation a bit: $A_i^+,A_i^-$ were defined to be the $i$-th axes in $d$-dimensions, but we use the same notation for the axes in 2-dimensions. Thus,
    \begin{align}
    & \lim_{t \to \infty} b(t)\P  \rbrac{X \in t \cdot \rbrac{\R_0^d \setminus \bigcup_{i=1}^d\rbrac{A_i^+ \cup A_i^-}}} \notag\\
    & \quad = \sum_{S_0,S_1,S_{-1}} \lim_{t \to \infty} b(t)\P\rbrac{X \in t \cdot Q_{S_0,S_1,S_{-1}}} \notag\\
    & \quad \leq \sum_{S_0,S_1,S_{-1}} \lim_{t\to\infty}b(t)\P \rbrac{
        \bigcup_{1\leq k \neq l \leq d}\{(X_k,X_l) \in t \cdot \rbrac{ \R^2_0\setminus ((A_k^+ \cup A_k^-) \cup (A_l^+ \cup A_l^-))}\}
    } \notag\\
    & \quad \leq \sum_{S_0,S_1,S_{-1}} \sum_{1\leq k \neq l \leq d} \lim_{t \to \infty}b(t)\P \rbrac{
        (X_k,X_l)\in t \cdot \rbrac{ \R^2_0\setminus ((A_k^+ \cup A_k^-) \cup (A_l^+ \cup A_l^-))}
    } \notag\\
    & = 0 = \mu_{indep}^{(d)}\rbrac{\R_0^d \setminus \bigcup_{i=1}^d\rbrac{A_i^+ \cup A_i^-}} \label{e:d_dim_mass_zero_outside_axes}
\end{align}
    where \eqref{e:d_dim_mass_zero_outside_axes} holds because Proposition \ref{p:generalise_to_real_mrv} implies $(X_k,X_l) \in  RV_{-1}\rbrac{b,\mu_{indep}^{(2)}}$ and, 
    \begin{align*}
        & (X_k,X_l) \in RV_{-1}\rbrac{b,\mu_{indep}^{(2)}}\\
        &\implies \lim_{t \to \infty}b(t)\P \rbrac{(X_k,X_l)\in t \cdot \rbrac{ \R^2_0\setminus \rbrac{(A_k^+ \cup A_k^-) \cup (A_l^+ \cup A_l^-)}}}\\
        & \quad\quad=
    \mu_{indep}^{(2)}\rbrac{\R^2_0\setminus \rbrac{(A_k^+ \cup A_k^-) \cup (A_l^+ \cup A_l^-)}}
    =0\quad \forall\; k \neq l
    \end{align*}
    Now, take any BAFO Borel set $E \subset\R^d_0$ such that $\mu_{indep}^{(d)}(\partial E)=0$.\\
    Define $E_i^\pm=\{x \in \R_{\pm} : x \in E \cap A_i^\pm\}$. Then,
    \begin{align*}
        \lim_{t\to\infty}b(t)\P\rbrac{X \in t\cdot E}& = \sum_{i=1}^d\lim_{t \to \infty}b(t)\P\rbrac{X \in t\cdot \rbrac{ \{0\}^{i-1}\times E_i^+ \times \{0\}^{d-i}}}\\
        & + \lim_{t \to \infty}b(t)\P\rbrac{X \in t \cdot \rbrac{\{0\}^{i-1}\times E_i^- \times \{0\}^{d-i}}}\\
        & = \sum_{i=1}^d\lim_{t \to \infty}b(t)\P\rbrac{X \in t \cdot \rbrac{\R^{i-1}\times E_i^+ \times \R^{d-i}}}\\
        & + \lim_{t \to \infty}b(t)\P\rbrac{X \in t \cdot \rbrac{\R^{i-1}\times E_i^- \times \R^{d-i}}}\\
         & = \sum_{i=1}^d\lim_{t \to \infty}b(t)\P\rbrac{X_i \in t \cdot E_i^+} + \lim_{t \to \infty}b(t)\P\rbrac{X_i \in t \cdot E_i^-}\\
         &= \sum_{i=1}^d \mu_i^+(E_i^+) + \mu_i^-(E_i^-)=\mu_{indep}^{(d)}(E)
    \end{align*}
    where $d\mu_i^\pm=c_{\pm}(i)x^{-2}dx\; \forall\; i=1,\ldots,d.$ Note that the first two equalities above hold as \eqref{e:d_dim_mass_zero_outside_axes} implies there is no mass outside of the axes.\\ 
    This proves the claim.    
\end{proof}

\subsection{Additional examples of multivariate regular variation}\label{sec: examples supp}

\begin{example}[max-linear heavy-tailed factor models] Let the $Z_j$'s and the matrix $A$ be as in 
Example \ref{ex:linear_factor_model}.  Consider the model
$$
X = \bigvee_{j=1}^p a_j Z_j =: A\ovee Z,
$$
where $\bigvee$ denotes component-wise maxima of the vectors $a_j Z_j$ and the $a_j$'s are the columns of the
matrix $A$.  Thus $X$ is obtained by replacing the `$+$' operation in the definition of matrix multiplication by a
maximum.  Interestingly, the single large jump heuristic here entails that $X\in RV_\beta(\{t^\beta\}, \mu)$, where
$\mu$ is {\em the same} as for the linear model in Example \ref{ex:linear_factor_model}.  Consequently, the corresponding
angular measure associated with $\mu$ is \eqref{e:ex:linear_factor_model}.
\end{example}
The following two examples illustrate a small part of the rich landscape on the limit theorems for regularly varying 
random vectors.  Specifically, if one considers centered and rescaled component-wise sums (or maxima, respectively), 
the corresponding limit random vectors will have sum-stable (or max-stable, respectively) distributions.  Except in 
the Gaussian case, these sum-stable (max-stable, respectively) laws are {\em multivariate regularly varying}.

\begin{example}[multivariate max-stable distributions]\label{ex:max-stable}
Fix $\beta>0$ and let $\mu$ be an arbitrary non-zero 
Borel measure on $\R^d$, supported on $[0,\infty)^d\setminus\{0\}$ and such that
\begin{equation}\label{e:ex:max-stable-scaling}
 \mu(t\cdot A) = t^{-\beta} \mu(A) <\infty,
\end{equation}
for all $t>0$ and Borel $A \subset \R^d$ that are bounded away from $0$.

Then,
\begin{equation}\label{e:ex:max-stable}
F(x):= \exp\{ -\mu(\R_+^d \setminus [0,x])\},\ \ x\in (0,\infty)^d
\end{equation}
defines a valid cumulative distribution function of a random vector $X$, which is {\em multivariate regularly varying}
\citep[see e.g. Chapter 5 in][]{resnick:1987}.  More precisely, we have $X\in RV_\beta(b(t)=t^{\beta},\mu)$ and
in fact, the random vector $X$ is {\em max-stable}.  That is, for all integer $n\ge 1$, 
$$
\bigvee_{i=1}^n X(i) \stackrel{d}{=} n^{1/\beta} X,
$$
where the $X(i)$'s are independent copies of $X$ and `$\vee$' denotes the component-wise maximum operation.

The scaling property \eqref{e:ex:max-stable-scaling} implies that for any fixed norm $\|\cdot\|$ in $\R^d$, we have
$$
F(x) = \Pb[ X\le x] = 
 \exp\Big\{ -\int_{S_+} \Big( \max_{i=1,\cdots,d} \frac{\theta_i}{x_i}\Big)^\beta H (d\theta) \Big\},\ \ 
 x\in (0,\infty)^d,
$$
where $S_+:= S_{\|\cdot\|} \cap [0,\infty)^d$ is the positive part of the unit sphere in the chosen norm $\|\cdot\|$.

The angular measure $\sigma$ associated with the exponent measure $\mu$ is a normalized version of $H$:
$$
\sigma(A) = \frac{H(A)}{H(S_+)},\ \ \ A\subset S_+.
$$

Upon centering and transformation of the marginal distributions, the above class of multivariate max-stable laws
represent the entire class of {\em extreme value distributions}.  That is, the distributions arising in the limit
of centered and rescaled maxima of iid random vectors. For more details, see e.g. \cite{resnick:1987,beirlant2004statistics,resnick:2007}.
\end{example}
\begin{remark}\label{r:max-id} The powerful Poisson random measure perspective \citep[see e.g.][]{resnick:1987,resnick:2007} 
leads to a quick proof of the fact that Relation \eqref{e:ex:max-stable} yields a valid distribution function. 
Indeed, take $\Pi = \{\xi_i,\ i\in \N\}$ to be a Poisson point process on $\R_+^d = [0,\infty)^d$
with mean measure $\mu$ and define
$$
X := \bigvee_{i\in \N} \xi_i.
$$
Then, for all $x\in (0,\infty)^d$, we have
\begin{equation}\label{e:r:max-id}
\Pb[ X\le x] = \Pb[\Pi ([0,x]^c) = 0 ] = \exp\{ -\mu([0,x]^c)\}, 
\end{equation}
where the last equality follows from the fact that $\Pi(A)\sim {\rm Poisson}(\mu(A))$, for every Borel set 
$A\subset \R_+^d$.  This is precisely \eqref{e:ex:max-stable}.

Notice that this argument does not depend on the scaling property \eqref{e:ex:max-stable-scaling}. 
The general family of multivariate distributions as in \eqref{e:r:max-id} are known as {\em max-infinitely divisible}
distributions and many of them can be multivariate regularly varying \citep[see e.g. Chapter 5 in ][]{resnick:1987}.
\end{remark}

\begin{example}[stable non-Gaussian distributions] \label{ex:SaS}
Recall that a random vector $X$ in $\R^d$ is said to be sum-stable, if for all 
positive constants $a', a''$ there exist positive $a$ and a vector $b\in\R^d$ such that
$$
a' X' + a'' X'' \stackrel{d}{=} a X + b,
$$
where the $X'$ and $X''$ are independent copies of $X$ \citep[Definition 2.1.1 on page 57 in][]{samorodnitsky:taqqu:1994book}.

We focus on the simple but rather rich family of {\em symmetric} stable non-Gaussian distributions.
Fix an arbitrary norm $\|\cdot\|$ in $\R^d$.  It is well-known, though not trivial to show, that every 
symmetric non-Gaussian sum-stable random vector $X$ has a characteristic function of the form:
\begin{equation}\label{e:ex:SaS-ch-fun}
 \E[ e^{i X^\top u} ] =\exp\Big\{ -\int_{S_{\|\cdot\|}} |\langle u,\theta\rangle|^\beta \Gamma(du) \Big\},\ \ \mbox{
  where } 0<\beta<2
\end{equation}
\citep[see, e.g., Theorem 2.4.3 in][]{samorodnitsky:taqqu:1994book}, for some $\Gamma$ -- a finite 
symmetric measure on the unit sphere $S_{\|\cdot\|}$ in the chosen norm $\|\cdot\|$. (Note that $\Gamma$ depends on the
choice of the norm.) Conversely, every finite symmetric measure $\Gamma$ on $S$ yields a characteristic function 
of an S$\beta$S random vector $X$ as above.  

The case $\beta=2$ yields a Gaussian random vector.  Interestingly, when $0<\beta<2$, 
the S$\beta$S random vector $X$ is {\em multivariate regularly varying}
with exponent $\beta$ and angular measure
$$
\sigma(A) = \frac{\Gamma(A)}{\Gamma(S_{\|\cdot\|})},\ \ A\subset S_{\|\cdot\|}.
$$
Specifically, Theorem 4.4.8 on page 197 in \cite{samorodnitsky:taqqu:1994book} implies that 
$X \in RV_\beta(b(t)= t^\beta,\mu)$, where 
$\mu (B_{\|\cdot\|}(0,1)^c) = C_\beta  \Gamma(S_{\|\cdot\|})$  with
$$
C_\beta = \left\{\begin{array}{ll}
 \frac{1-\beta}{\Gamma(2-\beta)\cos(\pi\beta/2)} &,\ \beta\not=1\\
 2/\pi &,\ \beta =1
\end{array}\right.
$$
\citep[cf (1.2.9) on page 17 in][]{samorodnitsky:taqqu:1994book}.
\end{example}

\begin{remark}[Aside on notation]\label{r:alpha-vs-beta} Since $\alpha$ is reserved for the level of the Type I error here, we 
use $\beta$ to denote the tail exponent.  In the literature on non-Gaussian sum-stable 
distributions \citep[see, e.g.][]{samorodnitsky:taqqu:1994book}, 
$\alpha$ stands for the tail-exponent (stability index), while $\beta$ denotes the skewness parameter.
\end{remark}

The following example provides an alternative and analytically more convenient 
representation to the class of symmetric $\beta$-stable random vectors as discussed in Example \ref{ex:SaS}.  Interestingly, when $\beta=1$, we recover a
rich family of models, for which the exact, non-asymptotic, calibration properties of the 
Cauchy combination test can be thoroughly understood.  

For further details on non-Gaussian stable random vectors and processes,
we refer the reader to the classical monograph of \cite{samorodnitsky:taqqu:1994book}.  We will only 
review some basic notation and facts here.

\begin{example}[Multivariate S$1$S laws] \label{ex:S1S}
We begin with a rigorous definition of symmetric $\beta-$stable variables.
\begin{definition}[Symmetric $\beta$-stable (S$\beta$S)]
    Let $0<\beta\le 2$. A random
 variable $\xi$ is said to have a symmetric $\beta$-stable (S$\beta$S) distribution if
 $$
 \varphi_\xi(t) = \E[ e^{it \xi}] = e^{-\sigma_\xi^\beta |t|^\beta},\ \ \ t\in \R,
 $$
 for some scale coefficient $\sigma_\xi>0$.  We shall denote the scale coefficient 
 $\sigma_\xi$ of $\xi$ as $\|\xi\|_\beta$. (Not to be confused with a norm.)  
\end{definition}

If $0<\beta <2,$
we have that the S$\beta$S random variables are non-Gaussian and 
{\em heavy-tailed} in the sense that
\begin{equation}\label{e:tail-SaS}
\Pb [ \xi > t ] \sim c_\beta \frac{\|\xi\|_\beta^\beta}{t^\beta},\ \ \mbox{ as }t\to\infty,
\end{equation}
for some constant $c_\beta$.
\begin{definition}[Multivariate S$\beta$S]
     A random vector $X = (X_i)_{i=1}^d$ is said to be multivariate S$\beta$S (or just S$\beta$S)
 if for all
 $a_j\in \R$, we have that $\sum_{j=1}^d a_j X_j$ is S$\beta$S. 
\end{definition}

 This definition is ultimately equivalent to the one discussed in Example \ref{ex:SaS} for the case of symmetric random 
 vectors. The joint characteristic function of S$\beta$S random vectors given in \eqref{e:ex:SaS-ch-fun},
 can be equivalently expressed using the following fact \citep[see Chapter 3 in][]{samorodnitsky:taqqu:1994book}. 

 A random vector $X$ is S$\beta$S if and only if there exist $f_j \in L^\beta([0,1])$ such that
 \begin{align*}
 \varphi_X(t_1,\cdots,t_d) = \E e^{i \sum_{j=1}^d t_j X_j}
 = \exp\Big\{ - \int_{[0,1]} \Big| \sum_{j=1}^d t_j f_j(u)\Big|^\beta du \Big\}
 \end{align*}
 for all $t_j\in \R,\ j=1,\cdots,d$.  This means in particular that the scale
 coefficient of the S$\beta$S random variable $\xi := \sum_{j=1}^d t_j X_j$ equals
 \begin{align}\label{e:SaS}
\Big\|\sum_{j=1}^d t_j X_j\Big\|_\beta 
&= \Big(\int_{[0,1]} \Big| \sum_{j=1}^d t_j f_j(u)\Big|^\beta du\Big)^{1/\beta}
\end{align}
Conversely, every choice of $f_j \in L^\beta([0,1]),\ j=1,\cdots,d$ yields
a joint characteristic function of an S$\beta$S random vector as above.

As discussed in Example \ref{ex:SaS}, all non-Gaussian S$\beta$S vectors are multivariate regularly varying as well.  Their
angular measure can be expressed as:
$$
\sigma(\cdot)=\frac{\int_{0}^1 \mathbb{I}[f(u)/\norm{f} \in \cdot]\norm{f(u)}^{\beta}du}{\int_{0}^1\norm{f(u)}^\beta du},
$$
where $f(u)$ denotes the vector-valued function $(f_j(u))_{j=1}^d,\ u\in [0,1]$ and $\|\cdot\|$ is the corresponding norm 
associated with the angular measure.
In the case of $\beta=1$, the sum-stability of S$\beta$S vectors allows one to directly express the calibration properties of the Cauchy combination tests, as shown in the following corollary.
\end{example}

\begin{corollary}\label{c: s1s calib}
    Let $P_i,\ i=1,\cdots,d$ be Uniform$(0,1)$ distributed random variables and let $X_i:= \tan\rbrac{\pi\rbrac{\frac{1}{2}-P_i}} \sim$ standard Cauchy. Say $X:=(X_i)_{i=1}^d$ is multivariate S1S and $(w_i)_{i=1}^d$ are non-negative weights which sum to 1. Then, Cauchy combination test defined with these weights is asymptotically conservative, i.e.,
    \begin{equation*}
        \lim_{t\to\infty}\frac{\Pb(\sum_{i=1}^dw_iX_i>t)}{\Pb(X_1>t)}\leq 1
    \end{equation*}    
    Moreover, equality holds above iff  $\forall i,j \ni w_iw_j>0$ we have $f_i(u)f_j(u)\ge 0$ for a.e. $u\in[0,1]$. In this case, Cauchy combination test is exactly calibrated at all levels, not just asymptotically.
\end{corollary}
\begin{proof}
    For $\beta=1$ (S1S), any linear combination is Cauchy. Here, we assume that the coordinates have
unit scale,
\[
\|X_j\|_1=\int_0^1 |f_j(u)|\,du = 1 ,\qquad j=1,\dots,d.
\]
For weights $w_j\in\R$ with $\sum_{j=1}^d w_j=1$, Cauchy combination test considers
\[
T=\sum_{j=1}^d w_j X_j .
\]
Then, $T$ is Cauchy with scale
\[
\|T\|_1=\int_0^1 \Big|\sum_{j=1}^d w_j f_j(u)\Big|\,du ,
\]
and, in view of \eqref{e:tail-SaS}, the tail ratio satisfies
\begin{equation}\label{e: lim ratio scale of S1S}
\lim_{t\to\infty}\frac{\Pb(T>t)}{\Pb(X_1>t)}=\|T\|_1 .
\end{equation}
By convexity (triangle inequality),
\[
\|T\|_1 \le \sum_{j=1}^d |w_j| \int_0^1 |f_j(u)|\,du = 1 ,
\]
so rejecting for $T>F^{-1}_{X_1}(1-\alpha)$ yields an asymptotic type-I error $\le\alpha$.

For the equality condition, without loss of generality assume that $w_i>0\;\forall i.$ If not, the following argument directly applies to the subset with strictly positive weights. If the spectral functions are \emph{spectrally positive}, i.e.
\[
f_i(u)f_j(u)\ge 0 \quad \text{for a.e. }u\in[0,1]\ \text{and all }i,j,
\]
 then,
\[
\|T\|_1=\int_0^1 \Big|\sum_{j=1}^d w_j f_j(u)\Big|\,du
=\sum_{j=1}^d w_j \int_0^1 |f_j(u)|\,du
=\sum_{j=1}^d w_j = 1 .
\]
Hence $T$ is \emph{standard Cauchy}, and for every level $\alpha\in(0,1)$,
\[
\Pb\!\big(T>F^{-1}_{X_1}(1-\alpha)\big)=\alpha,
\]
i.e. the Cauchy combination test is \emph{exactly calibrated at all levels}. Thus, it is also asymptotically calibrated. For the other direction, note that equality in \eqref{e: lim ratio scale of S1S} holds iff
\begin{equation*}
    \left\vert \sum_{i=1}^d w_if_i(u) \right\vert=\sum_{i=1}^d w_i \abs{f_i(u)} \text{ for a.e. }u\in[0,1]
\end{equation*}
which implies spectral positivity.
\end{proof}
\begin{remark}
    Spectral positivity of the functions implies that the exponent measure is supported on the positive and negative orthants. As a result, \cref{c:CCT} applies and we arrive at asymptotic calibration for this copula. However, as we proved, calibration is not just asymptotic, but exact for this case.
\end{remark}

\section{Proofs} \label{sec:supp-proofs}
\subsection{Proof of \cref{c:CCT}} \label{sec:proof-cauchy}
\begin{proof}
We complete the proof for the case of equality.
    
     If $\text{supp } \sigma \subseteq \R_-^d \cup \R_+^d$,
     $$(\Theta_j)_+=0\; \forall\; j \text{ or } (\Theta_j)_+=\Theta_j\; \forall\; j, \quad \sigma-\text{a.s.}$$
     In both the above cases,

     $$\rbrac{\sum_{i=1}^dw_i\Theta_i}_+=0=\sum_{i=1}^dw_i(\Theta_i)_+ \text{ or } \rbrac{\sum_{i=1}^dw_i\Theta_i}_+=\sum_{i=1}^dw_i\Theta_i=\sum_{i=1}^dw_i(\Theta_i)_+, \quad \sigma-\text{a.s.}$$
     Thus,
     $$\E\sqbrac{\rbrac{\sum_{i=1}^dw_i\Theta_i}_+}=\sum_{i=1}^dw_i\E[(\Theta_i)_+]=\E[(\Theta_1)_+]$$
     By \eqref{e: +ve part calibration limit}, 
     \begin{align*}
         \lim_{t\to\infty} \frac{\P[T_w(X)>t]}{\P[X_1>t]} = \frac{1}{\E (\Theta_1)_+}  \E \Big ( \sum_{j=1}^d w_j \Theta_j\Big)_+ = 1
     \end{align*}
     and (asymptotic) calibration holds.\\
     Now, for the converse to hold, one can easily see that Jensen's inequality used in proving honesty, needs to hold with equality almost surely, i.e.,
     \begin{align}
         & \lim_{t\to\infty} \frac{\P[T_w(X)>t]}{\P[X_1>t]} = \frac{1}{\E (\Theta_1)_+}  \E \Big ( \sum_{j=1}^d w_j \Theta_j\Big)_+=\frac{1}{\E\rbrac{ \sum_{j=1}^dw_j(\Theta_j)_+}}  \E \Big ( \sum_{j=1}^d w_j \Theta_j\Big)_+ = 1\notag\\
         & \implies \E\rbrac{\Big ( \sum_{j=1}^d w_j \Theta_j\Big)_+ - \sum_{j=1}^dw_j(\Theta_j)_+}= 0\notag \\
         & \implies \Big ( \sum_{j=1}^d w_j \Theta_j\Big)_+ = \sum_{j=1}^dw_j(\Theta_j)_+, \quad \sigma-\text{a.s.} \label{e : equality in jensen holds a.s}
     \end{align}
     as the random variable inside the expectation is always non-negative due to Jensen's. This claim can be proved using the following general result: Say $f:\R^d\to \R$ is a convex function. Also assume that $\exists \{x_1,\ldots,x_d\}\subset\R^d,\; (w_i)_{i=1}^d \ni w_i>0\;\forall\; i\text{ and }\;\sum_{i=1}^dw_i=1 $ for which
     $$f\rbrac{\sum_{i=1}^dw_ix_i}=\sum_{i=1}^dw_if(x_i)$$
     i.e., equality in Jensen's holds. Then f \emph{must} be affine over the convex hull of $\{x_i\}$. In our case, $f(x)=x_+$ is affine only in $\R_+$ and $\R_-$. Thus, equality in Jensen's implies $\text{Conv}(\Theta_i: i=1,\ldots,d) \subseteq \R_+ \cup \R_- \implies \Theta_i \in \R_+\; \forall i$ or $\Theta_i \in \R_-\; \forall i$. However, for completeness, we also include an elementary proof below.\\ 
     Take any $\theta=(\theta_1,\ldots,\theta_d)$. Let $\theta_k=\min_i \theta_i \text{ and } \theta_l=\max_i \theta_i>0$ (assume). Then,
     \begin{align*}
         &  \sum_{j=1}^d w_j \theta_j=  w^* \theta_k + (1-w^*)\theta_l\\
         & \text{ where }w^*=\sum_{j=1}^dw_j\rbrac{\frac{\theta_j-\theta_l}{\theta_k-\theta_l}}\in[0,1]
     \end{align*}
     Now, since we assume $w_j>0\;\forall j,\;\exists\;\alpha^*\in (0,1]\ni$
     \begin{align}
         \alpha^*(\theta_l)_+=\sum_{j=1}^d w_j (\theta_j)_+ \label{e : strict convexity}
     \end{align}
     Thus, we have
     \begin{align}
         & \Big ( \sum_{j=1}^d w_j \theta_j\Big)_+= \sum_{j=1}^d w_j (\theta_j)_+ \label{e: eq jensen holds}\\
         & \implies( w^* \theta_k + (1-w^*)\theta_l)_+ =\alpha^*(\theta_l)_+>0\notag\\
         &\implies \alpha=w^*(\theta_k/\theta_l-1)+1=\sum_{j=1}^dw_j(\theta_j-\theta_l)/\theta_l+1=\sum_{j=1}^dw_j\theta_j/\theta_l\notag\\
         & \implies \sum_{j =1 }^d w_j(\theta_j)_+/\theta_l=\sum_{j=1}^dw_j\theta_j/\theta_l\notag \\
         & \implies (\theta_j)_- = 0\ \forall\, j, \text{ i.e., } \theta_j \geq 0\ \forall\, j \notag
     \end{align}
     As a result, if \eqref{e: eq jensen holds} holds, $\exists\theta_i>0\implies \theta\in\R_+^d$. Therefore,
     \begin{align}
         & \Big ( \sum_{j=1}^d w_j \theta_j\Big)_+= \sum_{j=1}^d w_j (\theta_j)_+\notag \\
         & \implies \theta \in \R_+^d \cup\R_-^d \notag \\ 
     \end{align}
     This means, \eqref{e : equality in jensen holds a.s} implies 
     \begin{align}
         \Theta \in \R^d_+ \cup \R^d_-, \quad \sigma-\text{a.s.}
     \end{align}
     which proves the only if direction and hence completes the proof.
 \end{proof}

\subsection{Proof of \cref{l:eq_univ_calib}} \label{sec:supp pf of univ calib lemma}

\begin{proof}
Let $X$ be multivariate regularly varying with (asymptotically) standard 1-Pareto marginals.  Then, for every 1-homogeneous continuous function, 
we know that 
$$
t\Pb[h(X)>t] \to c \E[ h(\Theta)],\ \ \ t\to\infty,
$$
where $\Theta = (\Theta_i)_{i=1}^d$
is a random vector with probability distribution $\sigma$ on the unit simplex
$$
 \Delta = \{ (w_i)_{i=1}^d\, :\, w_i\ge 0,\ \sum_i w_i = 1\}.
$$
Technically, $\sigma$ is defined on $S_{\norm{\cdot}_1}$, but the positivity of $X_i$'s ensures that $\sigma(S_{\norm{\cdot}_1}\setminus \Delta)=0$. 

Thus, the $h-$combination test is universally calibrated iff $c\E\sqbrac{h(\Theta)} =1,\; \forall\;\sigma \text{ on } \Delta$.
Since the marginals are standardized, we have that
\begin{equation}\label{e: supp_marginal_exp}
    \E[\Theta_1] = \cdots = \E[\Theta_d] = 1/d.
\end{equation}
This is because $\E [\Theta_1] + \cdots + \E[\Theta_d]=\E[ \|\Theta\|_1] =1$ and Proposition \ref{p:general} implies $\E\sqbrac{(\Theta_i)_+}=\E\sqbrac{\Theta_i}$ is a positive constant for all $i$.
This means that
$$
t \Pb[ X_i>t] \sim c\cdot (1/d) = 1,\ \ \Rightarrow \ \ c = d.
$$
This proves the claim.
\end{proof}

\subsection{Proof of \cref{thm:characterization}} \label{sec:char}

We first prove an auxiliary lemma. 
\begin{lemma}\label{l:AD}
Suppose ${\cal G}=\{g_1,\cdots,g_d\} \subset {\mathbb B}_+(S)$ 
satisfies the anti-dominance condition. If for
some weights $w \in \R^{d}$, we have 
\begin{equation}\label{e:l:AD}
h(\cdot) = \sum_{i=1}^d w_i g_i(\cdot) \in {\mathbb B}_+(S),
\end{equation}
then it implies that $w \in \mathbb{R}_+^{d}$. 
\end{lemma}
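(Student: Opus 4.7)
The plan is to prove the contrapositive by contradiction. Assume some coefficient is negative and split the index set into the (nonempty) ${\cal I} := \{i \in \{1,\ldots,d\} : w_i < 0\}$ together with ${\cal I}^c := \{1,\ldots,d\} \setminus {\cal I}$. Rewriting the hypothesis $h(x) = \sum_i w_i g_i(x) \ge 0$ for all $x \in S$ in the equivalent form
$$\sum_{i \in {\cal I}} (-w_i)\, g_i(\cdot) \;\le\; \sum_{j \in {\cal I}^c} w_j\, g_j(\cdot),$$
and setting $\lambda_i := -w_i > 0$ for $i \in {\cal I}$ and $\lambda_j := w_j \ge 0$ for $j \in {\cal I}^c$ casts this inequality in exactly the form that is forbidden by the anti-dominance condition, with $\sum_{i \in {\cal I}} \lambda_i > 0$.

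If ${\cal I}$ is a proper subset of $\{1,\ldots,d\}$, we are done: this is precisely the scenario ruled out by applying the anti-dominance condition to ${\cal I}$. The only remaining case is the boundary situation ${\cal I} = \{1,\ldots,d\}$, meaning every $w_i < 0$. Here, since each $g_i \ge 0$ we get $h = \sum_i w_i g_i \le 0$ pointwise, and combining with $h \ge 0$ forces $h \equiv 0$, hence $\sum_i (-w_i) g_i(x) = 0$ for every $x \in S$. Because each term $(-w_i) g_i$ is nonnegative, this pins down $g_i \equiv 0$ for all $i$. However, the anti-dominance condition applied to any singleton ${\cal I}_0 = \{i_0\}$ with $\lambda_{i_0} = 1$ and the other $\lambda_j = 0$ requires $g_{i_0} \not\le 0$, i.e., $g_{i_0}$ is strictly positive somewhere, contradicting $g_{i_0} \equiv 0$. (The trivial case $d = 1$ is vacuous, since then the condition in the statement is empty.)

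The only real (and minor) obstacle is handling the boundary case where every weight is negative, which is not directly covered by the anti-dominance definition (quantified only over \emph{proper} nonempty subsets). The resolution is the observation above: such a situation would force all $g_i$ to vanish identically, which is itself ruled out by anti-dominance on singletons.
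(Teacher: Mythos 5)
Your proof is correct and takes essentially the same route as the paper's: split the indices by the sign of $w_i$, rewrite the pointwise nonnegativity of $h$ as a dominance relation between the two groups, and contradict the anti-dominance condition. You are in fact slightly more careful than the paper, which dismisses the case where all $w_i<0$ with the one-line assertion that $\mathcal{I}^c$ is nonempty ``since $h$ and the $g_i$'s are all non-negative,'' whereas you explicitly justify this boundary case by showing it would force every $g_i\equiv 0$, which anti-dominance (applied to singletons) forbids.
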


\begin{proof}[of \cref{l:AD}] Suppose that \eqref{e:l:AD} holds where
$w_{i_0}<0$ for some $i_0\in\{1,\cdots,d\}$.  Then, let 
${\cal I}:=\{ i\, :\, w_i<0\}$ and observe that since
$h$ and the $g_i$'s are all non-negative, then ${\cal I}^c=\{j\, :\, w_j\ge 0\}$ 
is non-empty.  Thus $\emptyset\not={\cal I}\subsetneq \{1,\cdots,d\}$.  On the 
other hand, Relation \eqref{e:l:AD} can be equivalently written as
$$
h(x) =  \sum_{j\in {\cal I}^c} w_j g_j(x) - \sum_{i\in{\cal I}} |w_i| g_i(x),\ \ x\in S.
$$
This, since $h$ is a non-negative function, entails
that
$$
\sum_{i\in{\cal I}} |w_i| g_i(x) \le \sum_{j\in {\cal I}^c} w_j g_j(x),\ \ \forall x\in S,
$$
where $|w_{i_0}|>0$ for some $i_0\in {\cal I}$.  This 
contradicts the anti-dominance condition.
\end{proof}

\begin{remark} \label{r:anti-dominance} While the anti-dominance condition may 
appear to be stringent, in some cases it is very easy to verify.  
Indeed, suppose that 
$$S = \{ (u_i)_{i=1}^d\, :\, u_i\ge 0,\ \sum_{i=1}^d u_i=1\}$$
is the non-negative unit simplex.  Let 
also $g_i(u) = u_i,\ u\in S$ be the coordinate functions.  
Then, clearly for no choice of $\lambda_i\ge 0$, and a non-empty set 
${\cal I} \subsetneq \{1,\cdots,d\}$ such that $\sum_{i\in {\cal I}}
\lambda_i>0$, can we have
$$
\sum_{i\in {\cal I}} \lambda_i u_i \le \sum_{j\in {\cal I}^c}
\lambda_j u_j,\ \ \forall u = (u_i)_{i=1}^d \in S. 
$$
Indeed, this inequality is violated by taking $u_{j_0}\downarrow 0$,
for some $j_0\in {\cal I}^c$ with $\lambda_{j_0}>0$.
\end{remark}

\begin{proof}[of \cref{thm:characterization}]  For simplicity, and  without loss of generality we will assume that $c=1$.
Assume that $h \in {\mathbb B}_+(S)$ is such that $(h,\mu) = 1$ for all
$\mu\in {\cal M}_c({\cal G})$. We will prove part {\em (i)} in two steps.\\

\noindent
{\bf Step 1.} Consider any set $\{y_i,\ i=1,\cdots,m\}$ containing the fixed set of points 
$\{x_1,\cdots,x_d\}$ and define the matrix
$$
D = (g_i(y_j))_{d\times m}.
$$
Notice that $G$ is a sub-matrix of $D$, obtained by selecting the $d$ columns of $D$ that
correspond to the set $\{x_1,\cdots,x_d\}$.

By assumption, we have that $1 := (1,\dots,1)^{\T}$ is an interior point of $G(\R_+^d)$ and hence,
$1$ is also an interior point of $D(\R_+^m)\supset G(\R_+^d)$.

We will show that
\begin{equation}
    \label{e:mu-existence}
    D\mu = 1,\ \ \mbox{ for some  }\mu\in (0,\infty)^m
\end{equation}
that is, the vector $\mu$ has all positive entries.

Let  $\mu_0 = (\mu_0(1),\cdots,\mu_0(m)) \in (0,\infty)^m$ be an arbitrary
vector of strictly positive entries.  Since
$1\in  D(\R_+^m)^\circ$, there exists a sufficiently small $\delta>0$, and a
$\mu_\delta\in \R_+^m$, such that $D\mu_\delta = 1 - \delta D\mu_0$.  Indeed, this follows
from the facts that for all $\epsilon>0$, there exists a $\delta>0$ such that
$1 - \delta D\mu_0 \in B_{1}(\epsilon)$ where
$B_{1}(\epsilon)\subset D(\R_+^m)$.

Now, define
$$
\mu:= \mu_\delta + \delta \mu_0.
$$
Observe that by construction $\mu \in (0,\infty)^m$ has all positive entries and
$$
D\mu =  1 - \delta D\mu_0 +\delta D(\mu_0) = 1.
$$
This completes the proof of \eqref{e:mu-existence}.
We shall use this fact in the following step of the proof.\\

\noindent
{\bf Step 2.} Note that every $\nu\in \R_+^m$ corresponds to a 
measure 
$$\varphi_\nu(du) := \sum_{i=1}^m \nu_i \epsilon_{\{y_i\}}(du),$$
where $\epsilon_{\{y\}}(A) = 1_A(y),\ A\in {\cal S}$ is the unit mass measure at the singleton 
$\{y\}$.  With this correspondence, we have that
$$
(h,\varphi_\nu) = h^\top \nu,
$$
where $h := (h(y_j))_{j=1}^m$. Thus, the assumptions of the theorem entail
$$
h^\top \nu =1,\ \mbox{ for all }\nu\in\R_+^m\mbox{ such that }D\nu = 1
$$
We will show that $h \in V_{{\cal G}}:= {\rm span}(g_i,\ i=1,\cdots,d),$ where
$g_i:= (g_i(y_j))_{j=1}^m$.  Suppose that
$$
h_0:= {\rm Proj}_{V_{{\cal G}}} (h).
$$
Define the vector
$$
\nu_\epsilon:= \mu+ \epsilon (h-h_0),
$$
and notice that since by construction $\mu$ has positive entries, there is an $\epsilon>0$,
such that $\nu_\epsilon \in\R_+^m$.

Then, since $h-h_0 \perp g_i$, we obtain
$D \nu_\epsilon = D\mu = 1$.  This, by assumption implies
$$
h^\top \nu_\epsilon = 1.
$$
Since by assumption we also have $h^\top \mu = 1$, it follows that
$$
0 = h^\top (\nu_\epsilon - \mu) = \epsilon h^\top (h-h_0).
$$
This, however, since $\epsilon>0$, implies that $h-h_0 = 0$.  Indeed,
since $h_0\in V_{\cal G} \perp h-h_0$, it follows that
$$
0 = h^\top (h-h_0) =
(h-h_0)^\top (h-h_0) = \|h-h_0\|^2.
$$
We have thus shown that $h=h_0={\rm Proj}_{V_{\cal G}}(h).$
This means that there exist coefficients $\lambda_i\in \R,\ i=1,\cdots,d$, possibly dependent 
on the set $\{y_j\}$, such that
\begin{equation}\label{e:h(y_j)}
h(y_j) = \sum_{i=1}^d\lambda_i g_i(y_j),\ \ \mbox{ for all } j=1,\cdots,m.
\end{equation}
It remains to show that the coefficients $\lambda_i$ do not depend on the choice of the $\{y_j\}$'s.

Notice, however, that we started with a {\em fixed} set 
$\{x_i,\ i=1,\cdots,d\} \subset\{y_j,\ j=1,\cdots,m\}$,  such that the matrix 
$G = (g_i(x_j))_{d\times d}$ is invertible.  By focusing on a subset of the 
equations in \eqref{e:h(y_j)}, we obtain $\lambda G = \widetilde{h}^\top$, where
$\widetilde{h} = (h(x_i),\ i=1,\cdots,d)$. Hence $\lambda = \widetilde{h}^\top G^{-1}$, which
demonstrates the uniqueness of the vector $\lambda = (\lambda_i,\ i=1,\cdots,d)$.  This
completes the proof of part ({\em i}).\\

Part {\em (ii)} follows from Lemma \ref{l:AD} due to the anti-dominance condition.
\end{proof}

\subsection{Proof of \cref{thm:frechet}}\label{sec: fct calib}

\begin{proof}
Result 1 directly follows from the max-stability of the Fr\'echet distribution. 

For result 2, apply Lemma \ref{l:homogeneous} with $h=h_{\vee,w}$ -
$$
\lim_{t\to\infty} t\Pb[h_{\vee,w}(X) >t] = \lim_{t \to +\infty} \frac{\Pb(h_{\vee,w}(X) > t)}{\Pb(X_1 >t)}= \frac{c_{\mu}}{\sum_{i=1}^d w_i}E_{\sigma} \sqbrac{\bigvee_{i=1}^d w_i\Theta_i}
$$
where $\sigma(du)$ is the angular probability measure on $\Delta$ associated with $\mu$, the exponent measure of $X$. With calculations similar to that done in \cref{l:eq_univ_calib}, one can show $c_{\mu}=d$. Now, use the simple bound,

\begin{equation}\label{e: condition for frechet honesty}
    \bigvee_{i=1}^dw_i\Theta_i \le \sum_{i=1}^d w_i\Theta_i
\end{equation}

because $\Theta_i\ge 0,\; \forall i.$ Then,

\begin{equation}\label{e: less than equal 1 frechet}
    \frac{d}{\sum_{i=1}^dw_i}E_{\sigma}\sqbrac{\bigvee_{i=1}^dw_i\Theta_i} \leq \frac{d}{\sum_{i=1}^dw_i}\sum_{i=1}^dw_iE_{\sigma}[\Theta_i]=\frac{d}{\sum_{i=1}^dw_i}\sum_{i=1}^dw_i \rbrac{\frac{1}{d}}=1
\end{equation}
Now, the above holds with equality iff \eqref{e: condition for frechet honesty} holds with equality $\sigma-$a.s. But,
\begin{equation*}
    \bigvee_{i=1}^dw_i\Theta_i = \sum_{i=1}^d w_i\Theta_i \quad \sigma-a.s. \iff w_iw_j\Theta_i\Theta_j=0\quad \sigma-a.s., \; \forall i \ne j
\end{equation*}
As we have assumed $w_i>0,\; \forall\; i$, we have,
\begin{align*}
    \bigvee_{i=1}^dw_i\Theta_i = \sum_{i=1}^d w_i\Theta_i \quad \sigma-a.s. & \iff \Theta_i\Theta_j=0\quad \sigma-a.s., \; \forall i \ne j \\
    &\iff \text{supp}(\sigma) \subseteq\{e_i:i=1,\ldots,d\}
\end{align*}
i.e., exponent measure $\mu$ of X is supported on the (positive) axes only.

Now, for any $1\le i< j\le d$, take $p\in[0,1]$ sufficiently large such that $F_{X_i}^{-1}(p)= F_{X_j}^{-1}(p)>0$. Note that equality between the quantiles holds because both $X_i \text{ and }X_j$ are 1-Fr\'echet. Then,
\begin{align*}
    &\P\rbrac{X_i>F_{X_i}^{-1}(p), X_j>F_{X_j}^{-1}(p)}\\  
    &\le\P\rbrac{X \in \R_+^{i-1} \times \rbrac{F_{X_i}^{-1}(p),\infty} \times \R_+^{j-i-1}\times\rbrac{F_{X_j}^{-1}(p),\infty} \times \R_+^{d-j}}
\end{align*}
Let $t_p=F_{X_i}^{-1}(p)=F_{X_j}^{-1}(p) \implies \lim_{p \to 1-}t_p=\infty$. Thus,
\begin{align*}
    & b(t_p)\P\rbrac{X_i>F_{X_i}^{-1}(p), X_j>F_{X_j}^{-1}(p)} \\
    & \le b(t_p)\P\rbrac{\frac{X}{t_p} \in \R_+^{i-1} \times \rbrac{1,\infty} \times \R_+^{j-i-1}\times\rbrac{1,\infty} \times \R_+^{d-j}}\\
    & \implies \lim_{p \to 1-} b(t_p)\P\rbrac{X_i>F_{X_i}^{-1}(p), X_j>F_{X_j}^{-1}(p)} \\
    & \le \lim_{p \to 1-} b(t_p)\P\rbrac{\frac{X}{t_p} \in \R_+^{i-1} \times \rbrac{1,\infty} \times \R_+^{j-i-1}\times\rbrac{1,\infty} \times \R_+^{d-j}}\\
    & =\mu\rbrac{\R_+^{i-1} \times \rbrac{1,\infty} \times \R_+^{j-i-1}\times\rbrac{1,\infty} \times \R_+^{d-j}}=0
\end{align*}
Now since $X_i's$ are standard 1-Fr\'echet,
\begin{align*}
     \lim_{t\to\infty}b(t)\P\rbrac{X_j>t}=1 & \implies \lim_{p\to1-}b(t_p)\P\rbrac{X_j>F_{X_j}^{-1}(p)}=1 \text{ or }\\
     b(t_p) & \sim \rbrac{\P\rbrac{X_j>F_{X_j}^{-1}(p)}}^{-1} \text{ as } p\to1-
\end{align*} 
Thus,
\begin{align*}
    & \lim_{p\to1-} b(t_p)\P\rbrac{X_i>F_{X_i}^{-1}(p), X_j>F_{X_j}^{-1}(p)}=0\\
    & \implies \lambda(X_i,X_j)=\lim_{p\to1-} \frac{\P\rbrac{X_i>F_{X_i}^{-1}(p), X_j>F_{X_j}^{-1}(p)}}{\P\rbrac{X_j>F_{X_j}^{-1}(p)}}=0
\end{align*}
i.e., $X_i's$ are asymptotically independent.\\
 This proves that the support of $\mu$ concentrated on the axes implies $X$ is asymptotically independent. The other direction is proved by Proposition \ref{p:same_const_indep_implies_mrv}. Thus, equality holds in \eqref{e: less than equal 1 frechet} iff $X$ is asymptotically independent.
 \end{proof}

\section{Additional numerical results}\label{sec: supplement simulation}
This section contains numerical results that complements those in \cref{sec:num} of the main text. \cref{fig:calib-exch,fig:pow-exch} respectively show the type-I error and power of combination tests when the shape matrix of the multivariate $t$-distribution is of exchangeable type.

\begin{figure}[htb]
\centering
\includegraphics[width=0.9\textwidth]{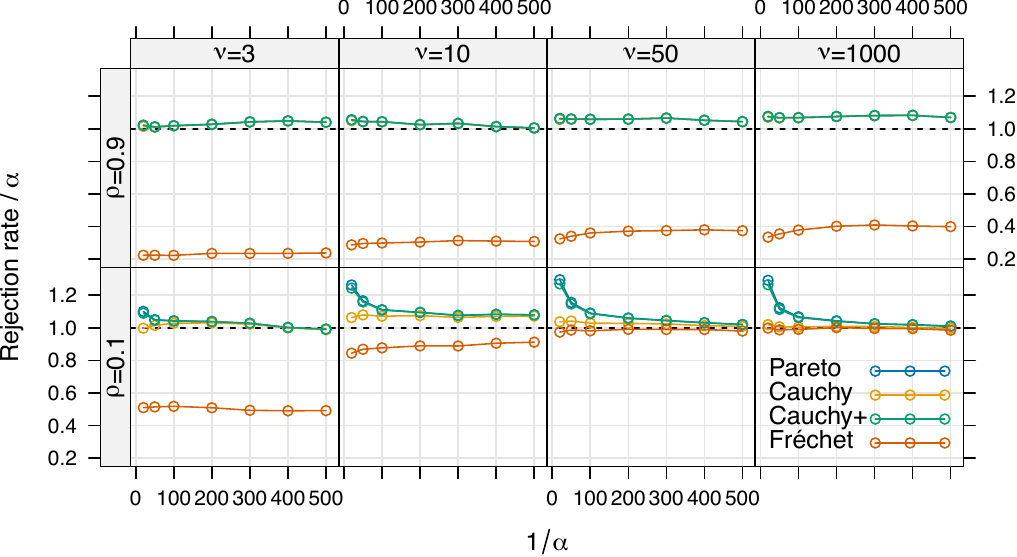}
\caption{Type-I error relative to the nominal level of combination tests under a 10-dimensional multivariate $t$-copula with $\nu$ degrees of freedom and an exchangeable shape matrix $\Sigma = (\rho^{\mathbb{I}_{i\neq j}})_{d\times d}$. The curves of Pareto and Cauchy+ almost overlap. The results are computed from $10^6$ replications.}
\label{fig:calib-exch}
\end{figure}

\begin{figure}[htb]
\centering
\includegraphics[width=0.9\textwidth]{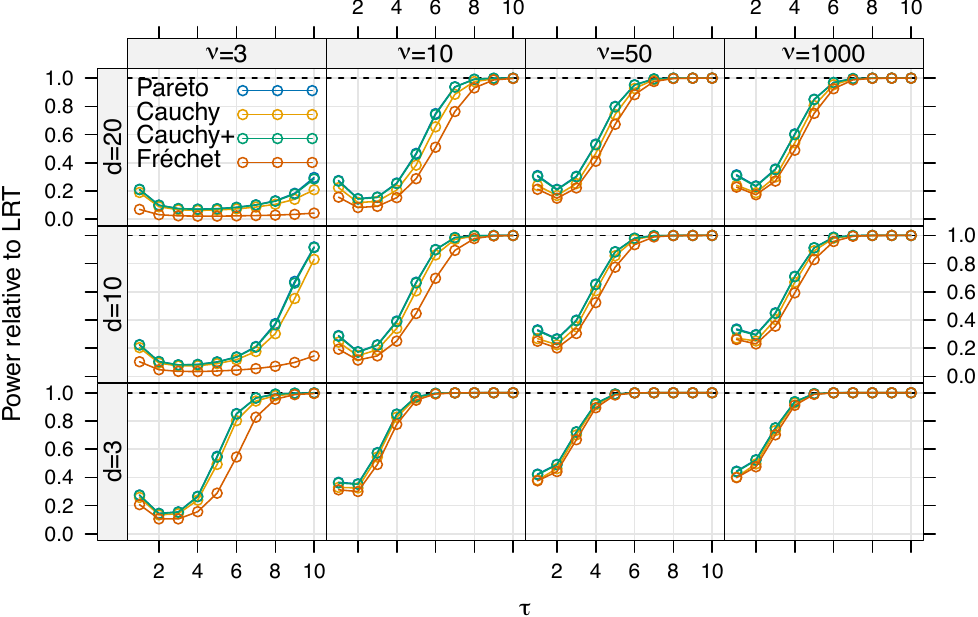}
\caption{Power of combination tests for testing $\mu=0$ relative to the oracle likelihood ratio test. Each combination test is computed from $d$ two-sided $p$-values corresponding to the coordinates of $t_{\nu}(\tau \eta, \Sigma)$, where $\Sigma = (\rho^{\mathbb{I}_{i\neq j}})_{d\times d}$ with $\rho=0.1$. The curves of Pareto and Cauchy+ almost overlap. The results are computed from $10^6$ replications.}
\label{fig:pow-exch}
\end{figure}

\section{Additional details for application to independence testing with survey data} \label{sec:supp-nhanes}

\begin{table}[htb] \small
\begin{center}
\begin{tabular}{lrrrrrrrrrrr}
& \multicolumn{5}{c}{Female} && \multicolumn{5}{c}{Male}\\\cline{2-6}\cline{8-12}
& $n\;$ & $q_{50}$ & $q_{10}$ & $q_{90}$ & Bonf && $n\;$ & $q_{50}$ & $q_{10}$ & $q_{90}$ & Bonf\\
den/lab &  620 &  0.08 &  0.04 &  0.13 &  0.35 && 648 &  0.01 &  0.01 &  0.03 &  0.04 \\
den/lab &  496 &  0.13 &  0.06 &  0.21 &  0.69 && 519 &  0.05 &  0.02 &  0.11 &  0.19 \\
den/lab &  397 &  0.14 &  0.07 &  0.23 &  0.78 && 415 &  0.07 &  0.03 &  0.14 &  0.28 \\
den/lab &  318 &  0.15 &  0.08 &  0.24 &  0.85 && 332 &  0.08 &  0.03 &  0.15 &  0.36 \\
den/lab &  254 &  0.17 &  0.09 &  0.26 &  1.00 && 266 &  0.10 &  0.04 &  0.19 &  0.50 \\
den/lab &  204 &  0.18 &  0.10 &  0.28 &  1.00 && 213 &  0.12 &  0.06 &  0.22 &  0.64 \\
den/lab &  163 &  0.20 &  0.12 &  0.31 &  1.00 && 170 &  0.15 &  0.07 &  0.25 &  0.90 \\
den/lab &  131 &  0.22 &  0.14 &  0.32 &  1.00 && 136 &  0.19 &  0.10 &  0.29 &  1.00 \\
den/lab &  105 &  0.25 &  0.16 &  0.35 &  1.00 && 109 &  0.22 &  0.12 &  0.32 &  1.00 \\
den/lab &   84 &  0.28 &  0.20 &  0.38 &  1.00 &&  87 &  0.26 &  0.16 &  0.36 &  1.00 \\\hline
bmx/lab &  620 &  0.00 &  0.00 &  0.00 &  0.00 && 648 &  0.00 &  0.00 &  0.00 &  0.00 \\
bmx/lab &  496 &  0.00 &  0.00 &  0.01 &  0.01 && 519 &  0.00 &  0.00 &  0.00 &  0.00 \\
bmx/lab &  397 &  0.01 &  0.00 &  0.02 &  0.02 && 415 &  0.00 &  0.00 &  0.00 &  0.00 \\
bmx/lab &  318 &  0.01 &  0.00 &  0.03 &  0.03 && 332 &  0.00 &  0.00 &  0.00 &  0.00 \\
bmx/lab &  254 &  0.02 &  0.01 &  0.05 &  0.05 && 266 &  0.00 &  0.00 &  0.01 &  0.01 \\
bmx/lab &  204 &  0.03 &  0.01 &  0.07 &  0.11 && 213 &  0.01 &  0.00 &  0.02 &  0.01 \\
bmx/lab &  163 &  0.05 &  0.02 &  0.11 &  0.19 && 170 &  0.01 &  0.00 &  0.03 &  0.04 \\
bmx/lab &  131 &  0.07 &  0.03 &  0.14 &  0.32 && 136 &  0.02 &  0.01 &  0.06 &  0.08 \\
bmx/lab &  105 &  0.11 &  0.06 &  0.19 &  0.61 && 109 &  0.05 &  0.02 &  0.10 &  0.19 \\
bmx/lab &   84 &  0.15 &  0.09 &  0.25 &  1.00 &&  87 &  0.08 &  0.04 &  0.15 &  0.38 \\\hline
dexa/lab &  620 &  0.00 &  0.00 &  0.00 &  0.00 && 648 &  0.00 &  0.00 &  0.00 &  0.00 \\
dexa/lab &  496 &  0.01 &  0.00 &  0.02 &  0.01 && 519 &  0.00 &  0.00 &  0.00 &  0.00 \\
dexa/lab &  397 &  0.01 &  0.00 &  0.02 &  0.02 && 415 &  0.00 &  0.00 &  0.00 &  0.00 \\
dexa/lab &  318 &  0.01 &  0.00 &  0.03 &  0.04 && 332 &  0.00 &  0.00 &  0.01 &  0.01 \\
dexa/lab &  254 &  0.02 &  0.01 &  0.05 &  0.06 && 266 &  0.00 &  0.00 &  0.01 &  0.01 \\
dexa/lab &  204 &  0.03 &  0.01 &  0.07 &  0.11 && 213 &  0.01 &  0.00 &  0.02 &  0.02 \\
dexa/lab &  163 &  0.05 &  0.02 &  0.11 &  0.20 && 170 &  0.01 &  0.01 &  0.04 &  0.05 \\
dexa/lab &  131 &  0.08 &  0.04 &  0.15 &  0.35 && 136 &  0.03 &  0.01 &  0.06 &  0.10 \\
dexa/lab &  105 &  0.11 &  0.06 &  0.20 &  0.64 && 109 &  0.05 &  0.02 &  0.11 &  0.23 \\
dexa/lab &   84 &  0.15 &  0.09 &  0.24 &  1.00 &&  87 &  0.09 &  0.04 &  0.16 &  0.44 \\\hline
\end{tabular}
\caption{Summary statistics for $p$-values testing the null hypothesis of independence between blocks of variables, based on subsamples of the National Health and Nutrition Examination Survey data.}
\label{tab: supp_nhanes}
\end{center}
\end{table}

As noted in Section \ref{sec: application} and summarized in Table~\ref{nhanes} of the paper, the Pareto combination test yields significant combined $p$-values in five of the six sex $\times$ phenotype settings. The same five settings are also identified using the Bonferroni correction. However, the principal advantage of Pareto combination test is its substantially greater power at smaller sample sizes, as demonstrated in Table~\ref{tab: supp_nhanes}.

Across each subtable, the Bonferroni combined $p$-values increase much more rapidly with decreasing sample size than those obtained via Pareto combination test. Focusing on the five sex $\times$ phenotype settings that reject the global null under both methods at the largest sample sizes, we observe that Pareto combination test rejects the null hypothesis at level $\alpha=0.05$ for all sample sizes at which Bonferroni does so. Moreover, in four of these five settings—{\em bmx/lab} (male and female) and {\em dexa/lab} (male and female)—Pareto combination test continues to reject the global null for up to 20\% additional sample sizes. When the significance level is relaxed to $\alpha=0.1$, this advantage increases to approximately 30\%. These results demonstrate that Pareto combination test detects significance in multiple testing scenarios more effectively than the classical Bonferroni correction.

\bibliographystyle{imsart-nameyear}

\begin{thebibliography}{47}


\bibitem[\protect\citeauthoryear{Barbe, Foug\`eres and
  Genest}{2006}]{barbe:fougeres:genest:2006}
\begin{barticle}[author]
\bauthor{\bsnm{Barbe},~\bfnm{Philippe}\binits{P.}},
  \bauthor{\bsnm{Foug\`eres},~\bfnm{Anne-Laure}\binits{A.-L.}} \AND
  \bauthor{\bsnm{Genest},~\bfnm{Christian}\binits{C.}}
(\byear{2006}).
\btitle{On the tail behavior of sums of dependent risks}.
\bjournal{Astin Bull.}
\bvolume{36}
\bpages{361--373}.
\bdoi{10.2143/AST.36.2.2017926}
\bmrnumber{2312671}
\end{barticle}
\endbibitem

\bibitem[\protect\citeauthoryear{Beirlant
  et~al.}{2004}]{beirlant2004statistics}
\begin{bbook}[author]
\bauthor{\bsnm{Beirlant},~\bfnm{Jan}\binits{J.}},
  \bauthor{\bsnm{Goegebeur},~\bfnm{Yuri}\binits{Y.}},
  \bauthor{\bsnm{Segers},~\bfnm{Johan}\binits{J.}} \AND
  \bauthor{\bsnm{Teugels},~\bfnm{Jozef}\binits{J.}}
(\byear{2004}).
\btitle{Statistics of Extremes: Theory and Applications}.
\bpublisher{Wiley}, \baddress{Chichester}.
\end{bbook}
\endbibitem

\bibitem[\protect\citeauthoryear{Berman}{1961}]{berman1961convergence}
\begin{barticle}[author]
\bauthor{\bsnm{Berman},~\bfnm{Simeon~M.}\binits{S.~M.}}
(\byear{1961}).
\btitle{Convergence to Bivariate Limiting Extreme Value Distributions}.
\bjournal{Annals of Mathematical Statistics}
\bvolume{32}
\bpages{733--743}.
\bdoi{10.1214/aoms/1177705059}
\end{barticle}
\endbibitem

\bibitem[\protect\citeauthoryear{Billingsley}{1999}]{billingsley:1999}
\begin{bbook}[author]
\bauthor{\bsnm{Billingsley},~\bfnm{Patrick}\binits{P.}}
(\byear{1999}).
\btitle{Convergence of probability measures},
\bedition{second} ed.
\bseries{Wiley Series in Probability and Statistics: Probability and
  Statistics}.
\bpublisher{John Wiley \& Sons, Inc., New York}
\bnote{A Wiley-Interscience Publication}.
\bdoi{10.1002/9780470316962}
\bmrnumber{1700749}
\end{bbook}
\endbibitem

\bibitem[\protect\citeauthoryear{Breiman}{1965}]{breiman:1965}
\begin{barticle}[author]
\bauthor{\bsnm{Breiman},~\bfnm{L.}\binits{L.}}
(\byear{1965}).
\btitle{On some limit theorems similar to the arc-sin law}.
\bjournal{Theory of Probability and its Applications}
\bvolume{10}
\bpages{323-331}.
\end{barticle}
\endbibitem

\bibitem[\protect\citeauthoryear{Chen, Embrechts and
  Wang}{2025}]{chen2025unexpected}
\begin{barticle}[author]
\bauthor{\bsnm{Chen},~\bfnm{Yuyu}\binits{Y.}},
  \bauthor{\bsnm{Embrechts},~\bfnm{Paul}\binits{P.}} \AND
  \bauthor{\bsnm{Wang},~\bfnm{Ruodu}\binits{R.}}
(\byear{2025}).
\btitle{An unexpected stochastic dominance: Pareto distributions, dependence,
  and diversification}.
\bjournal{Operations Research}
\bvolume{73}
\bpages{1336--1344}.
\end{barticle}
\endbibitem

\bibitem[\protect\citeauthoryear{de~Haan and
  Ferreira}{2006}]{dehaan:ferreira:2006}
\begin{bbook}[author]
\bauthor{\bparticle{de} \bsnm{Haan},~\bfnm{Laurens}\binits{L.}} \AND
  \bauthor{\bsnm{Ferreira},~\bfnm{Ana}\binits{A.}}
(\byear{2006}).
\btitle{Extreme value theory}.
\bseries{Springer Series in Operations Research and Financial Engineering}.
\bpublisher{Springer}, \baddress{New York}.
\bnote{An introduction}.
\bmrnumber{MR2234156}
\end{bbook}
\endbibitem

\bibitem[\protect\citeauthoryear{DiCiccio, DiCiccio and
  Romano}{2020}]{diciccio2020exact}
\begin{barticle}[author]
\bauthor{\bsnm{DiCiccio},~\bfnm{Cyrus~J}\binits{C.~J.}},
  \bauthor{\bsnm{DiCiccio},~\bfnm{Thomas~J}\binits{T.~J.}} \AND
  \bauthor{\bsnm{Romano},~\bfnm{Joseph~P}\binits{J.~P.}}
(\byear{2020}).
\btitle{Exact tests via multiple data splitting}.
\bjournal{Statistics \& Probability Letters}
\bvolume{166}
\bpages{108865}.
\end{barticle}
\endbibitem

\bibitem[\protect\citeauthoryear{Dunn}{1958}]{dunn1958estimation}
\begin{barticle}[author]
\bauthor{\bsnm{Dunn},~\bfnm{Olive~Jean}\binits{O.~J.}}
(\byear{1958}).
\btitle{Estimation of the means of dependent variables}.
\bjournal{The Annals of Mathematical Statistics}
\bpages{1095--1111}.
\end{barticle}
\endbibitem

\bibitem[\protect\citeauthoryear{Embrechts, Lambrigger and
  W\"{u}thrich}{2009}]{embrechts:lambrigger:wuthrich:2009}
\begin{barticle}[author]
\bauthor{\bsnm{Embrechts},~\bfnm{Paul}\binits{P.}},
  \bauthor{\bsnm{Lambrigger},~\bfnm{Dominik~D.}\binits{D.~D.}} \AND
  \bauthor{\bsnm{W\"{u}thrich},~\bfnm{Mario~V.}\binits{M.~V.}}
(\byear{2009}).
\btitle{Multivariate extremes and the aggregation of dependent risks: examples
  and counter-examples}.
\bjournal{Extremes}
\bvolume{12}
\bpages{107--127}.
\bdoi{10.1007/s10687-008-0071-5}
\bmrnumber{2515643}
\end{barticle}
\endbibitem

\bibitem[\protect\citeauthoryear{Fang et~al.}{2023}]{fang2023heavy}
\begin{barticle}[author]
\bauthor{\bsnm{Fang},~\bfnm{Yusi}\binits{Y.}},
  \bauthor{\bsnm{Chang},~\bfnm{Chung}\binits{C.}},
  \bauthor{\bsnm{Park},~\bfnm{Yongseok}\binits{Y.}} \AND
  \bauthor{\bsnm{Tseng},~\bfnm{George~C}\binits{G.~C.}}
(\byear{2023}).
\btitle{Heavy-tailed distribution for combining dependent p-values with
  asymptotic robustness}.
\bjournal{Statistica Sinica}
\bvolume{33}
\bpages{1115--1142}.
\end{barticle}
\endbibitem

\bibitem[\protect\citeauthoryear{Fisher}{1948}]{fisher1948combining}
\begin{barticle}[author]
\bauthor{\bsnm{Fisher},~\bfnm{Ronald~A}\binits{R.~A.}}
(\byear{1948}).
\btitle{Combining independent tests of significance}.
\bjournal{American Statistician}
\bvolume{2}
\bpages{30}.
\end{barticle}
\endbibitem

\bibitem[\protect\citeauthoryear{Good}{1958}]{good1958significance}
\begin{barticle}[author]
\bauthor{\bsnm{Good},~\bfnm{I~John}\binits{I.~J.}}
(\byear{1958}).
\btitle{Significance tests in parallel and in series}.
\bjournal{Journal of the American Statistical Association}
\bvolume{53}
\bpages{799--813}.
\end{barticle}
\endbibitem

\bibitem[\protect\citeauthoryear{Gui, Jiang and
  Wang}{2025}]{gui2025aggregating}
\begin{barticle}[author]
\bauthor{\bsnm{Gui},~\bfnm{Lin}\binits{L.}},
  \bauthor{\bsnm{Jiang},~\bfnm{Yuchao}\binits{Y.}} \AND
  \bauthor{\bsnm{Wang},~\bfnm{Jingshu}\binits{J.}}
(\byear{2025}).
\btitle{Aggregating dependent signals with heavy-tailed combination tests}.
\bjournal{Biometrika}
\bpages{asaf038}.
\end{barticle}
\endbibitem

\bibitem[\protect\citeauthoryear{Gui et~al.}{2025}]{gui:mao:wang:wang:2025}
\begin{bmisc}[author]
\bauthor{\bsnm{Gui},~\bfnm{Lin}\binits{L.}},
  \bauthor{\bsnm{Mao},~\bfnm{Tiantian}\binits{T.}},
  \bauthor{\bsnm{Wang},~\bfnm{Jingshu}\binits{J.}} \AND
  \bauthor{\bsnm{Wang},~\bfnm{Ruodu}\binits{R.}}
(\byear{2025}).
\btitle{Validity and Power of Heavy-Tailed Combination Tests under Asymptotic
  Dependence}.
\end{bmisc}
\endbibitem

\bibitem[\protect\citeauthoryear{Guo and Shah}{2025}]{guo2025rank}
\begin{barticle}[author]
\bauthor{\bsnm{Guo},~\bfnm{F~Richard}\binits{F.~R.}} \AND
  \bauthor{\bsnm{Shah},~\bfnm{Rajen~D}\binits{R.~D.}}
(\byear{2025}).
\btitle{Rank-transformed subsampling: inference for multiple data splitting and
  exchangeable p-values}.
\bjournal{Journal of the Royal Statistical Society Series B: Statistical
  Methodology}
\bvolume{87}
\bpages{256--286}.
\end{barticle}
\endbibitem

\bibitem[\protect\citeauthoryear{Hult and Lindskog}{2006}]{hult:lindskog:2006}
\begin{barticle}[author]
\bauthor{\bsnm{Hult},~\bfnm{Henrik}\binits{H.}} \AND
  \bauthor{\bsnm{Lindskog},~\bfnm{Filip}\binits{F.}}
(\byear{2006}).
\btitle{Regular variation for measures on metric spaces}.
\bjournal{Publ. Inst. Math. (Beograd) (N.S.)}
\bvolume{80(94)}
\bpages{121--140}.
\bdoi{10.2298/PIM0694121H}
\bmrnumber{2281910 (2008g:28016)}
\end{barticle}
\endbibitem

\bibitem[\protect\citeauthoryear{Hunsberger et~al.}{2022}]{Hunsberger2022}
\begin{barticle}[author]
\bauthor{\bsnm{Hunsberger},~\bfnm{Sally}\binits{S.}},
  \bauthor{\bsnm{Long},~\bfnm{Lixin}\binits{L.}},
  \bauthor{\bsnm{Reese},~\bfnm{Sarah}\binits{S.}},
  \bauthor{\bsnm{Hong},~\bfnm{Grace}\binits{G.}},
  \bauthor{\bsnm{Myles},~\bfnm{Iain}\binits{I.}},
  \bauthor{\bsnm{Zerbe},~\bfnm{Christa}\binits{C.}},
  \bauthor{\bsnm{Chetchotisakd},~\bfnm{Ploenchan}\binits{P.}} \AND
  \bauthor{\bsnm{Shih},~\bfnm{Joanna}\binits{J.}}
(\byear{2022}).
\btitle{Rank correlation inferences for clustered data with small sample size}.
\bjournal{Statistica Neerlandica}.
\end{barticle}
\endbibitem

\bibitem[\protect\citeauthoryear{Jan{\ss}en, Neblung and
  Stoev}{2023}]{jansen:neblung:stoev:2023}
\begin{barticle}[author]
\bauthor{\bsnm{Jan{\ss}en},~\bfnm{Anja}\binits{A.}},
  \bauthor{\bsnm{Neblung},~\bfnm{Sebastian}\binits{S.}} \AND
  \bauthor{\bsnm{Stoev},~\bfnm{Stilian}\binits{S.}}
(\byear{2023}).
\btitle{Tail-dependence, exceedance sets, and metric embeddings}.
\bjournal{Extremes}.
\bdoi{10.1007/s10687-023-00471-z}
\end{barticle}
\endbibitem

\bibitem[\protect\citeauthoryear{Joe}{2015}]{joe2015dependence}
\begin{bbook}[author]
\bauthor{\bsnm{Joe},~\bfnm{Harry}\binits{H.}}
(\byear{2015}).
\btitle{Dependence Modeling with Copulas}.
\bseries{Chapman \& Hall/CRC Monographs on Statistics \& Applied Probability}.
\bpublisher{CRC Press}, \baddress{Boca Raton, FL}.
\end{bbook}
\endbibitem

\bibitem[\protect\citeauthoryear{Kulik and Soulier}{2020}]{kulik:soulier:2020}
\begin{bbook}[author]
\bauthor{\bsnm{Kulik},~\bfnm{Rafa\l}\binits{R.}} \AND
  \bauthor{\bsnm{Soulier},~\bfnm{Philippe}\binits{P.}}
(\byear{2020}).
\btitle{Heavy-tailed time series}.
\bseries{Springer Series in Operations Research and Financial Engineering}.
\bpublisher{Springer, New York}.
\bdoi{10.1007/978-1-0716-0737-4}
\bmrnumber{4174389}
\end{bbook}
\endbibitem

\bibitem[\protect\citeauthoryear{Lancaster}{1961}]{lancaster1961combination}
\begin{barticle}[author]
\bauthor{\bsnm{Lancaster},~\bfnm{H.~O.}\binits{H.~O.}}
(\byear{1961}).
\btitle{The Combination of Probabilities: An Application of Orthonomal
  Functions}.
\bjournal{Australian Journal of Statistics}
\bvolume{3}
\bpages{20--33}.
\bdoi{10.1111/j.1467-842X.1961.tb00058.x}
\end{barticle}
\endbibitem

\bibitem[\protect\citeauthoryear{Lindskog, Resnick and
  Roy}{2014}]{lindskog:resnick:roy:2014}
\begin{barticle}[author]
\bauthor{\bsnm{Lindskog},~\bfnm{Filip}\binits{F.}},
  \bauthor{\bsnm{Resnick},~\bfnm{Sidney~I.}\binits{S.~I.}} \AND
  \bauthor{\bsnm{Roy},~\bfnm{Joyjit}\binits{J.}}
(\byear{2014}).
\btitle{Regularly varying measures on metric spaces: hidden regular variation
  and hidden jumps}.
\bjournal{Probab. Surv.}
\bvolume{11}
\bpages{270--314}.
\bdoi{10.1214/14-PS231}
\bmrnumber{3271332}
\end{barticle}
\endbibitem

\bibitem[\protect\citeauthoryear{Liu, Meng and Pillai}{2025}]{liu2025heavily}
\begin{barticle}[author]
\bauthor{\bsnm{Liu},~\bfnm{Tianle}\binits{T.}},
  \bauthor{\bsnm{Meng},~\bfnm{Xiao-Li}\binits{X.-L.}} \AND
  \bauthor{\bsnm{Pillai},~\bfnm{Natesh~S}\binits{N.~S.}}
(\byear{2025}).
\btitle{A Heavily Right Strategy for Statistical Inference with Dependent
  Studies in Any Dimension}.
\bjournal{arXiv preprint arXiv:2501.01065}.
\end{barticle}
\endbibitem

\bibitem[\protect\citeauthoryear{Liu and Xie}{2020}]{liu:xie:2020}
\begin{barticle}[author]
\bauthor{\bsnm{Liu},~\bfnm{Y.}\binits{Y.}} \AND
  \bauthor{\bsnm{Xie},~\bfnm{J.}\binits{J.}}
(\byear{2020}).
\btitle{Cauchy Combination Test: A Powerful Test with Analytic p-Value
  Calculation under Arbitrary Dependency Structures}.
\bjournal{Journal of the American Statistical Association}
\bvolume{115}
\bpages{393--402}.
\bdoi{10.1080/01621459.2018.1554485}
\end{barticle}
\endbibitem

\bibitem[\protect\citeauthoryear{Liu et~al.}{2019}]{liu2019acat}
\begin{barticle}[author]
\bauthor{\bsnm{Liu},~\bfnm{Yuan}\binits{Y.}},
  \bauthor{\bsnm{Chen},~\bfnm{Suying}\binits{S.}},
  \bauthor{\bsnm{Li},~\bfnm{Bingshan}\binits{B.}},
  \bauthor{\bsnm{Zhang},~\bfnm{Kai}\binits{K.}},
  \bauthor{\bsnm{Wang},~\bfnm{Kai}\binits{K.}} \AND
  \bauthor{\bsnm{Lin},~\bfnm{Xiang}\binits{X.}}
(\byear{2019}).
\btitle{{ACAT:} A Fast and Powerful p-Value Combination Method for Rare-Variant
  Analysis in Sequencing Studies}.
\bjournal{American Journal of Human Genetics}
\bvolume{104}
\bpages{410--421}.
\bdoi{10.1016/j.ajhg.2019.01.002}
\end{barticle}
\endbibitem

\bibitem[\protect\citeauthoryear{Long et~al.}{2023}]{long2023cauchy}
\begin{barticle}[author]
\bauthor{\bsnm{Long},~\bfnm{Mingya}\binits{M.}},
  \bauthor{\bsnm{Li},~\bfnm{Zhengbang}\binits{Z.}},
  \bauthor{\bsnm{Zhang},~\bfnm{Wei}\binits{W.}} \AND
  \bauthor{\bsnm{Li},~\bfnm{Qizhai}\binits{Q.}}
(\byear{2023}).
\btitle{The Cauchy combination test under arbitrary dependence structures}.
\bjournal{The American Statistician}
\bvolume{77}
\bpages{134--142}.
\end{barticle}
\endbibitem

\bibitem[\protect\citeauthoryear{Meinshausen and
  B{\"u}hlmann}{2010}]{meinshausen2010stability}
\begin{barticle}[author]
\bauthor{\bsnm{Meinshausen},~\bfnm{Nicolai}\binits{N.}} \AND
  \bauthor{\bsnm{B{\"u}hlmann},~\bfnm{Peter}\binits{P.}}
(\byear{2010}).
\btitle{Stability selection}.
\bjournal{Journal of the Royal Statistical Society Series B: Statistical
  Methodology}
\bvolume{72}
\bpages{417--473}.
\end{barticle}
\endbibitem

\bibitem[\protect\citeauthoryear{Meng}{1994}]{meng1994posterior}
\begin{barticle}[author]
\bauthor{\bsnm{Meng},~\bfnm{Xiao-Li}\binits{X.-L.}}
(\byear{1994}).
\btitle{{Posterior Predictive $p$-Values}}.
\bjournal{The Annals of Statistics}
\bvolume{22}
\bpages{1142 -- 1160}.
\end{barticle}
\endbibitem

\bibitem[\protect\citeauthoryear{Mikosch and
  Wintenberger}{2024}]{mikosch:wintenberger:2024}
\begin{bbook}[author]
\bauthor{\bsnm{Mikosch},~\bfnm{Thomas}\binits{T.}} \AND
  \bauthor{\bsnm{Wintenberger},~\bfnm{Olivier}\binits{O.}}
(\byear{2024}).
\btitle{Extreme value theory for time series---models with power-law tails}.
\bseries{Springer Series in Operations Research and Financial Engineering}.
\bpublisher{Springer, Cham}.
\bdoi{10.1007/978-3-031-59156-3}
\bmrnumber{4823721}
\end{bbook}
\endbibitem

\bibitem[\protect\citeauthoryear{Reay and Cairns}{2021}]{reay2021advancing}
\begin{barticle}[author]
\bauthor{\bsnm{Reay},~\bfnm{William~R}\binits{W.~R.}} \AND
  \bauthor{\bsnm{Cairns},~\bfnm{Murray~J}\binits{M.~J.}}
(\byear{2021}).
\btitle{Advancing the use of genome-wide association studies for drug
  repurposing}.
\bjournal{Nature Reviews Genetics}
\bvolume{22}
\bpages{658--671}.
\end{barticle}
\endbibitem

\bibitem[\protect\citeauthoryear{Resnick}{1987}]{resnick:1987}
\begin{bbook}[author]
\bauthor{\bsnm{Resnick},~\bfnm{S.~I.}\binits{S.~I.}}
(\byear{1987}).
\btitle{Extreme Values, Regular Variation and Point Processes}.
\bpublisher{Springer-Verlag}, \baddress{New York}.
\end{bbook}
\endbibitem

\bibitem[\protect\citeauthoryear{Resnick}{2007}]{resnick:2007}
\begin{bbook}[author]
\bauthor{\bsnm{Resnick},~\bfnm{Sidney~I.}\binits{S.~I.}}
(\byear{2007}).
\btitle{Heavy-tail phenomena}.
\bseries{Springer Series in Operations Research and Financial Engineering}.
\bpublisher{Springer}, \baddress{New York}.
\bnote{Probabilistic and statistical modeling}.
\bmrnumber{MR2271424}
\end{bbook}
\endbibitem

\bibitem[\protect\citeauthoryear{Resnick}{2024}]{resnick:2024}
\begin{bbook}[author]
\bauthor{\bsnm{Resnick},~\bfnm{Sidney~I.}\binits{S.~I.}}
(\byear{2024}).
\btitle{The art of finding hidden risks}.
\bpublisher{Springer}, \baddress{New York}.
\bnote{Hidden Regular Variation in the 21st Century}.
\bdoi{\url{https://doi.org/10.1007/978-3-031-57599-0}}
\end{bbook}
\endbibitem

\bibitem[\protect\citeauthoryear{Samorodnitsky and
  Taqqu}{1994}]{samorodnitsky:taqqu:1994book}
\begin{bbook}[author]
\bauthor{\bsnm{Samorodnitsky},~\bfnm{G.}\binits{G.}} \AND
  \bauthor{\bsnm{Taqqu},~\bfnm{M.~S.}\binits{M.~S.}}
(\byear{1994}).
\btitle{{\it {S}table {N}on-{G}aussian {P}rocesses: {S}tochastic {M}odels with
  {I}nfinite {V}ariance}}.
\bpublisher{Chapman and Hall}, \baddress{New York, London}.
\end{bbook}
\endbibitem

\bibitem[\protect\citeauthoryear{Sarkar}{1998}]{sarkar1998some}
\begin{barticle}[author]
\bauthor{\bsnm{Sarkar},~\bfnm{Sanat~K}\binits{S.~K.}}
(\byear{1998}).
\btitle{Some probability inequalities for ordered MTP 2 random variables: a
  proof of the Simes conjecture}.
\bjournal{The Annals of Statistics}
\bpages{494--504}.
\end{barticle}
\endbibitem

\bibitem[\protect\citeauthoryear{Sibuya}{1960}]{sibuya:1960}
\begin{barticle}[author]
\bauthor{\bsnm{Sibuya},~\bfnm{Masaaki}\binits{M.}}
(\byear{1960}).
\btitle{Bivariate extreme statistics. {I}}.
\bjournal{Ann. Inst. Statist. Math. Tokyo}
\bvolume{11}
\bpages{195--210}.
\bdoi{10.1007/bf01682329}
\bmrnumber{115241}
\end{barticle}
\endbibitem

\bibitem[\protect\citeauthoryear{Simes}{1986}]{simes1986improved}
\begin{barticle}[author]
\bauthor{\bsnm{Simes},~\bfnm{R.~J.}\binits{R.~J.}}
(\byear{1986}).
\btitle{An Improved {Bonferroni} Procedure for Multiple Tests of Significance}.
\bjournal{Biometrika}
\bvolume{73}
\bpages{751--754}.
\bdoi{10.1093/biomet/73.3.751}
\end{barticle}
\endbibitem

\bibitem[\protect\citeauthoryear{Singh, Xie and
  Strawderman}{2005}]{singh2005combining}
\begin{barticle}[author]
\bauthor{\bsnm{Singh},~\bfnm{Kesar}\binits{K.}},
  \bauthor{\bsnm{Xie},~\bfnm{Minge}\binits{M.}} \AND
  \bauthor{\bsnm{Strawderman},~\bfnm{William~E}\binits{W.~E.}}
(\byear{2005}).
\btitle{Combining information from independent sources through confidence
  distributions}.
\end{barticle}
\endbibitem

\bibitem[\protect\citeauthoryear{Tippett}{1931}]{tippett1931methods}
\begin{bbook}[author]
\bauthor{\bsnm{Tippett},~\bfnm{L.~H.~C}\binits{L.~H.~C.}}
(\byear{1931}).
\btitle{The Methods of Statistics}.
\bpublisher{Williams and Norgate Ltd}.
\end{bbook}
\endbibitem

\bibitem[\protect\citeauthoryear{Vovk and Wang}{2020}]{vovk2020combining}
\begin{barticle}[author]
\bauthor{\bsnm{Vovk},~\bfnm{Vladimir}\binits{V.}} \AND
  \bauthor{\bsnm{Wang},~\bfnm{Ruodu}\binits{R.}}
(\byear{2020}).
\btitle{Combining p-values via averaging}.
\bjournal{Biometrika}
\bvolume{107}
\bpages{791--808}.
\end{barticle}
\endbibitem

\bibitem[\protect\citeauthoryear{Vovk and Wang}{2021}]{vovk2021values}
\begin{barticle}[author]
\bauthor{\bsnm{Vovk},~\bfnm{Vladimir}\binits{V.}} \AND
  \bauthor{\bsnm{Wang},~\bfnm{Ruodu}\binits{R.}}
(\byear{2021}).
\btitle{E-values: Calibration, combination and applications}.
\bjournal{The Annals of Statistics}
\bvolume{49}
\bpages{1736--1754}.
\end{barticle}
\endbibitem

\bibitem[\protect\citeauthoryear{{\v{S}}id{\'a}k}{1967}]{vsidak1967rectangular}
\begin{barticle}[author]
\bauthor{\bsnm{{\v{S}}id{\'a}k},~\bfnm{Zbyn{\v{e}}k}\binits{Z.}}
(\byear{1967}).
\btitle{Rectangular confidence regions for the means of multivariate normal
  distributions}.
\bjournal{Journal of the American statistical association}
\bvolume{62}
\bpages{626--633}.
\end{barticle}
\endbibitem

\bibitem[\protect\citeauthoryear{Wilson}{2019}]{wilson2019harmonic}
\begin{barticle}[author]
\bauthor{\bsnm{Wilson},~\bfnm{Daniel~J}\binits{D.~J.}}
(\byear{2019}).
\btitle{The harmonic mean p-value for combining dependent tests}.
\bjournal{Proceedings of the National Academy of Sciences}
\bvolume{116}
\bpages{1195--1200}.
\end{barticle}
\endbibitem

\bibitem[\protect\citeauthoryear{Wu et~al.}{2010}]{wu2010powerful}
\begin{barticle}[author]
\bauthor{\bsnm{Wu},~\bfnm{Michael~C}\binits{M.~C.}},
  \bauthor{\bsnm{Kraft},~\bfnm{Peter}\binits{P.}},
  \bauthor{\bsnm{Epstein},~\bfnm{Michael~P}\binits{M.~P.}},
  \bauthor{\bsnm{Taylor},~\bfnm{Deanne~M}\binits{D.~M.}},
  \bauthor{\bsnm{Chanock},~\bfnm{Stephen~J}\binits{S.~J.}},
  \bauthor{\bsnm{Hunter},~\bfnm{David~J}\binits{D.~J.}} \AND
  \bauthor{\bsnm{Lin},~\bfnm{Xihong}\binits{X.}}
(\byear{2010}).
\btitle{Powerful SNP-set analysis for case-control genome-wide association
  studies}.
\bjournal{The American Journal of Human Genetics}
\bvolume{86}
\bpages{929--942}.
\end{barticle}
\endbibitem

\bibitem[\protect\citeauthoryear{Yuen, Stoev and
  Cooley}{2020}]{yuen:stoev:cooley:2020}
\begin{barticle}[author]
\bauthor{\bsnm{Yuen},~\bfnm{Robert}\binits{R.}},
  \bauthor{\bsnm{Stoev},~\bfnm{Stilian}\binits{S.}} \AND
  \bauthor{\bsnm{Cooley},~\bfnm{Daniel}\binits{D.}}
(\byear{2020}).
\btitle{Distributionally robust inference for extreme {V}alue-at-{R}isk}.
\bjournal{Insurance Math. Econom.}
\bvolume{92}
\bpages{70--89}.
\bdoi{10.1016/j.insmatheco.2020.03.003}
\bmrnumber{4079575}
\end{barticle}
\endbibitem

\bibitem[\protect\citeauthoryear{Zhu et~al.}{2017}]{zhu2017}
\begin{barticle}[author]
\bauthor{\bsnm{Zhu},~\bfnm{Liping}\binits{L.}},
  \bauthor{\bsnm{Xu},~\bfnm{Kai}\binits{K.}},
  \bauthor{\bsnm{Li},~\bfnm{Runze}\binits{R.}} \AND
  \bauthor{\bsnm{Zhong},~\bfnm{Wei}\binits{W.}}
(\byear{2017}).
\btitle{Projection correlation between two random vectors}.
\bjournal{Biometrika}
\bvolume{104}
\bpages{829--843}.
\end{barticle}
\endbibitem

\end{thebibliography}

\end{document}